\documentclass[11pt, a4paper]{article}
\usepackage[english]{babel}
\usepackage[T1]{fontenc}
\usepackage[utf8]{inputenc}

\usepackage{textgreek}
\usepackage{sidecap}
\usepackage{titlesec}
\usepackage[usenames,dvipsnames,svgnames,table]{xcolor}
\usepackage{graphicx}
\usepackage{caption}
\usepackage{subcaption}
\usepackage{siunitx}
\usepackage{xcolor}
\usepackage{enumitem}

%% which works for rotating?
\usepackage{graphicx}
\usepackage{rotating}

\usepackage{caption}
\captionsetup[figure]{font={small,it}}
\captionsetup[figure]{labelfont={color=blue}}

\usepackage{amsmath}
\usepackage{amsfonts}
\usepackage{amsthm}
\usepackage{mathtools}
\usepackage{amssymb}
\usepackage{mathrsfs}
\usepackage{dsfont}

\usepackage{algorithm,algorithmic}

\allowdisplaybreaks

% declare a new theorem style
\newtheoremstyle{mythm}%
{3pt}% Space above
{3pt}% Space below 
{\itshape\color{black}}% Body font
{}% Indent amount
{\bfseries\color{blue}}% Theorem head font
{.}% Punctuation after theorem head
{.5em}% Space after theorem head
{}% Theorem head spec (can be left empty, meaning ‘normal’)

% declare a new theorem style
\newtheoremstyle{mydef}%
{3pt}% Space above
{3pt}% Space below 
{\upshape\color{black}}% Body font
{}% Indent amount
{\bfseries\color{blue}}% Theorem head font
{.}% Punctuation after theorem head
{.5em}% Space after theorem head
{}% Theorem head spec (can be left empty, meaning ‘normal’)

\theoremstyle{mythm}

\newtheorem{theorem}{Theorem}[section] % numbers in sections
\newtheorem{corollary}[theorem]{Corollary}
\newtheorem{proposition}[theorem]{Proposition}
\newtheorem{lemma}[theorem]{Lemma}
\newtheorem*{theorem*}{Theorem}
\newtheorem*{proposition*}{Proposition}

\theoremstyle{mydef}
\newtheorem{definition}[theorem]{Definition}

\numberwithin{equation}{section}

\usepackage{pgf,tikz}
\usetikzlibrary{patterns}
\usepackage{tikz-qtree}
\usetikzlibrary{calc,patterns,angles,quotes}

\usepackage{float}
\usetikzlibrary{shapes.misc}
\tikzstyle{root}=[circle,fill=black,inner sep=0pt,minimum size=8pt]
\tikzstyle{steiner}=[circle,fill=blue,inner sep=0pt,minimum size=8pt]
\tikzstyle{terminal}=[fill=red]

\tikzstyle{X}=[circle,fill=blue,inner sep=0pt,minimum size=6pt]
\tikzstyle{T}=[fill=red,inner sep=0pt,minimum size=4pt]
\tikzstyle{C}=[fill=red,inner sep=0pt,minimum size=4pt]
\tikzstyle{R}=[circle,fill=black,inner sep=0pt,minimum size=4pt]
\tikzstyle{background-line}=[color=green!55]

\tikzset{cross/.style={cross out, draw=black, minimum size=2*(#1-\pgflinewidth), inner sep=0pt, outer sep=0pt},
	%default radius will be 1pt. 
	cross/.default={2pt}}
	
\usepackage[dvipsnames]{xcolor}

\definecolor{mygray}{gray}{0.28}
\definecolor{myorange}{RGB}{220,120,50}

\newcommand{\oc}{i_c}
\newcommand{\ta}{z}
\newcommand{\ipinf}{i_{+\infty}}
\newcommand{\iminf}{i_{-\infty}}
\newcommand{\apr}{b}

% orcid stuff
\usepackage{nccfoots}
\usepackage{scalerel}
\usetikzlibrary{svg.path}

\definecolor{orcidlogocol}{HTML}{A6CE39}
\tikzset{
  orcidlogo/.pic={
    \fill[orcidlogocol] svg{M256,128c0,70.7-57.3,128-128,128C57.3,256,0,198.7,0,128C0,57.3,57.3,0,128,0C198.7,0,256,57.3,256,128z};
    \fill[white] svg{M86.3,186.2H70.9V79.1h15.4v48.4V186.2z}
                 svg{M108.9,79.1h41.6c39.6,0,57,28.3,57,53.6c0,27.5-21.5,53.6-56.8,53.6h-41.8V79.1z M124.3,172.4h24.5c34.9,0,42.9-26.5,42.9-39.7c0-21.5-13.7-39.7-43.7-39.7h-23.7V172.4z}
                 svg{M88.7,56.8c0,5.5-4.5,10.1-10.1,10.1c-5.6,0-10.1-4.6-10.1-10.1c0-5.6,4.5-10.1,10.1-10.1C84.2,46.7,88.7,51.3,88.7,56.8z};
  }
}

\newcommand\orcidicon[1]{\href{https://orcid.org/#1}{\mbox{\scalerel*{
\begin{tikzpicture}[yscale=-1,transform shape]
\pic{orcidlogo};
\end{tikzpicture}
}{|}}}}

\usepackage{hyperref}

\title{\vspace{-2.2cm}\textbf{Traveling waves of an FKPP-type model\\for self-organized growth}}
\author{Florian Kreten \thanks{Institut für Angewandte Mathematik, Rheinische Friedrich-Wilhelms-Universität, Endenicher Allee 60,
53115 Bonn, Germany. Email: \href{mailto:florian.kreten@uni-bonn.de}{florian.kreten@uni-bonn.de}. \newline
\indent This work was partly funded by the Deutsche Forschungsgemeinschaft (DFG, German Research Foundation) under Germany’s Excellence Strategy - GZ 2047/1, Projekt-ID 390685813 and by the Deutsche Forschungsgemeinschaft (DFG, German Research \mbox{Foundation}) - Projektnummer 211504053 - SFB 1060.} \, \orcidicon{0000-0003-1938-2590}}

\begin{document}
\renewcommand{\abstractname}{\vspace{-\baselineskip}}
\maketitle

\begin{abstract}
\vspace{-0.5cm}
\noindent \textbf{Abstract}: We consider a reaction-diffusion system of densities of two types of particles, introduced by Edouard Hannezo et al. in the context of branching morphogenesis (Cell, 171(1):242–255.e27, 2017). It is a simple model for a growth process: active, branching particles form the growing boundary layer of an otherwise static tissue, represented by inactive particles. The active particles diffuse, branch and become irreversibly inactive upon collision with a particle of arbitrary type. In absence of active particles, this system is in a steady state, without any a priori restriction on the amount of remaining inactive particles. Thus, while related to the well-studied FKPP-equation, this system features a game-changing continuum of steady state solutions, where each corresponds to a possible outcome of the growth process. However, simulations indicate that this system self-organizes: traveling fronts with fixed shape arise under a wide range of initial data. In the present work, we describe all positive and bounded traveling wave solutions, and obtain necessary and sufficient conditions for their existence. We find a surprisingly simple symmetry in the pairs of steady states which are joined via heteroclinic wave orbits. Our approach is constructive: we first prove the existence of almost constant solutions and then extend our results via a continuity argument along the continuum of limiting points.\\
	
\noindent \textbf{Key words}: Developmental biology, pattern formation, cellular organization, traveling wave, reaction-diffusion equation, continuum of fixed points.\\
\noindent \textbf{MSC2020}: 92C15, 35C07, 35K57, 34C14.
\end{abstract}

%\tableofcontents

\section{Motivation and result}

The mechanics of tissue-growth have drawn the attention of the scientific community. A central question is, how the cells are organized, how they react to and communicate with their environment on the microscopic level, and how their behavior during the growth phase gives rise to distinct macroscopic structures. Mathematical models can help to understand these processes. Works regarding organoids, wound healing or tumor growth are abundant \cite{Olivas_Organoid_MathModels, Mammoto2010_Organ_Development, Falco2021_Clinical_Cancer_Modeling_Glioblastoma, DAlessandro2021_Cell_Memory_Migration, Ladoux_Cell_Migration_Plasticity}. However, for most of these models, our numerical skills far predominate the possibility to analyze them rigorously. Hence, for understanding the basic mechanics of the underlying biological processes, the need for simplified models arises.

Especially when studying spatiotemporal effects and macroscopic pattern formation, reaction-diffusion systems and their traveling waves have proven insightful. One of the oldest and most studied models is the FKPP-equation \cite{Fisher_1937_Wave, KPP_1937_Wave}, describing the advance of an advantageous population. The arise of more complex spatial patterns due to the instability of a homogeneous state was first described in Turings groundbreaking paper \textit{The Chemical basis of Morphogenesis} \cite{Turing_pattern_formation_morghogenesis}. More recently, systems of Keller-Segel type have been studied extensively, where growth, movement and self-organization of a population are driven by chemotactic guidance \cite{Keller_bacteria, Perthame_Keller_Segel_2004, Painter_Keller_Segel_SelfOrganization_2019}. We want to highlight the works of Painter \cite{Painter_Keller_Segel_SelfOrganization_2019} and of Othmer \textit{et al.} \cite{Othmer2009WavesinBiology} for an impression of mathematical modeling of pattern formation in developmental biology.

The group of Hannezo \textit{et al.} proposed \textit{A Unifying Theory of Branching Morphogenesis} in epithelial tissues \cite{Hannezo_2017_Unifying}. They introduced a stochastic model, related to branching and annihilating random walks \cite{Cardy_Tauber_BARW}. In this model, a branched structure is represented by a network. This network undergoes stochastic growth dynamics, where each branch of the network grows independently from the others and follows a set of simple, local rules. At its tip, each branch elongates or splits up at certain rates and these tips are called \textit{active}. When an active tip comes too close to a different branch, it irreversibly ceases any activity and becomes \textit{inactive}. The numerical results of Hannezo \textit{et al.} reveal that this stochastic growth process self-organizes: the active tips are concentrated at the boundary of the network and form a rather sharp layer of growth. The center of the network is static and - rather surprisingly - exhibits a homogenous geometry, in particular a constant density of branches. Remarkably, as mentioned by the authors, this model self-organizes without any signaling gradients. Even a directional bias of the branches can be achieved, as the result of an appropriate spatial boundary. Moreover, the authors observed that their simulations were in good agreement with biological data from mammary glands, kidneys and the human prostate \cite{Hannezo_2017_Unifying}.

\begin{figure}[h]
 		\centering
 		\begin{minipage}[c]{0.32\textwidth}
\begin{picture}(100,100)
	\put(0,0){\includegraphics[width=\textwidth]{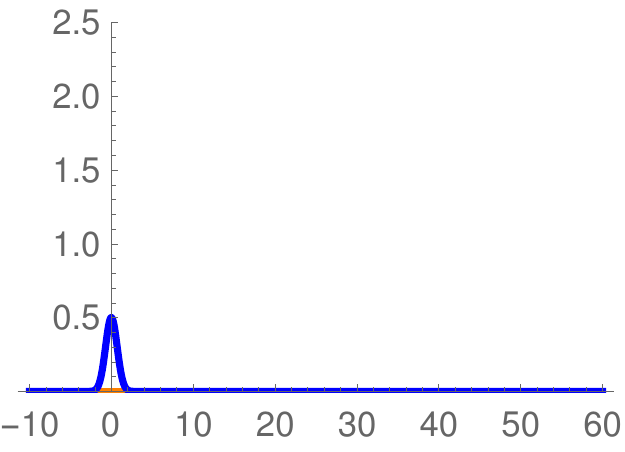}}
	\put(50,-10){\scalebox{1.5}{$x$}}
	\put(45,85){\scalebox{1.5}{$t = 0$}}
	\put(23,20){{\scalebox{1.5}{${\color{blue}A}$}}}
\end{picture} 	
 		\end{minipage}
 		\begin{minipage}[c]{0.32\textwidth}
\begin{picture}(100,100)
\put(0,0){\includegraphics[width=\textwidth]{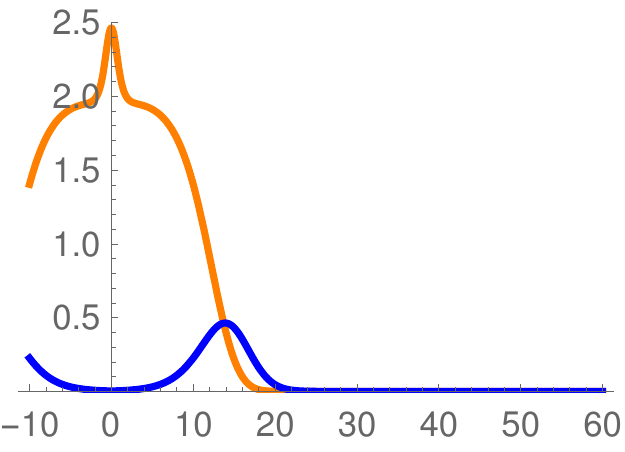}}
	\put(50,-10){\scalebox{1.5}{$x$}}
	\put(45,85){\scalebox{1.5}{$t = 10$}}
	\put(48, 45){{\scalebox{1.5}{${\color{RedOrange}I}$}}}
	\put(48, 20){{\scalebox{1.5}{${\color{blue}A}$}}}
\end{picture} 	
 		\end{minipage}
 		\begin{minipage}[c]{0.32\textwidth}
\begin{picture}(100,100)
 		\put(0,0){\includegraphics[width=\textwidth]{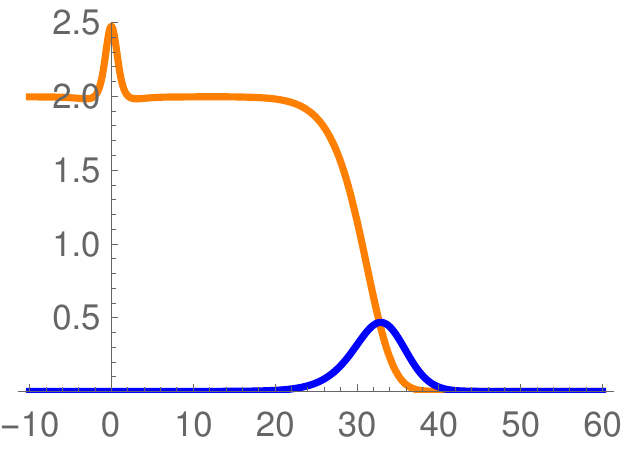}}
	\put(50,-10){\scalebox{1.5}{$x$}}
	\put(45,85){\scalebox{1.5}{$t = 20$}}
	\put(75, 45){{\scalebox{1.5}{${\color{RedOrange}I}$}}}
	\put(75, 20){{\scalebox{1.5}{${\color{blue}A}$}}}
\end{picture} 	
 		\end{minipage}
 	\vspace{0.2cm}
	\caption{Simulation of the Reaction-Diffusion System \eqref{EQUA} for $r = 0$. Given a small initial heap of active particles $A(x,0)=1/2 \exp (-x^2)$ and $I(x,0) = 0$, two identical traveling fronts arise, the right one is shown. After the separation of the two fronts away from the origin, the density of the remaining inactive particles is given by $I = 2$ and the front moves asymptotically with speed $c=2$.}
 		\label{Diff_system_pics}
\end{figure}

To study their model analytically, Hannezo \textit{et al.} proposed the following system, which corresponds to the diffusive limit of the above stochastic dynamics. We restrict ourselves to the one-dimensional case. Due to a simple linear rescaling (Appendix \ref{ch:Rescaling}), we only need to consider the normalized reaction-diffusion system
\begin{equation}
\begin{aligned}
		A_t &= A_{xx}  + A - A(A+I), \\
		I_t &= A(A+I) + rA. \label{EQUA}
\end{aligned}
\end{equation}
Here, $A,I: \mathds{R} \times \mathds{R}^+ \rightarrow \mathds{R}^+$ are the densities of \textit{active particles} and \textit{inactive particles}. The diffusion term describes the movement of the active particles, all other terms encode a growth process where the active particles eventually become inactive: the active particles they branch with rate $1$, produce inactive particles with rate $r \geq 0$, and become inactive upon collision with either an active or an inactive particle. The active particles grow logistically, which implies that the inactive particles grow at most exponentially. The resulting simple time-dependent bounds on $A$ and $I$ yield uniqueness and existence of smooth solutions, a suitable fixed-point theorem is presented in chapter 14 of \cite{Smoller_Shocks_Reaction_Diffusion}. More details about the underlying stochastic processes together with a non-rigorous derivation of this PDE can be found in \cite{Hannezo_2017_Unifying}. Note that without the inactive particles, \textit{i.e.} when $I = 0$, the remaining equation for $A$ reduces to the well-known FKPP-equation  \cite{Fisher_1937_Wave, KPP_1937_Wave}.

The System \eqref{EQUA} can be interpreted as a twofold degenerate Keller-Segel system \cite{Keller_bacteria, Arumugam_2020_Keller-Segel}: the active particles are not guided by a chemotactic gradient, but explore the space solely diffusively, and the inactive particles do not diffuse at all. Still, simulations of System \eqref{EQUA} show that general solutions of \eqref{EQUA} self-organize, which is typical for many different Keller-Segel systems \cite{Painter_Keller_Segel_SelfOrganization_2019}. The invading front of the system converges to a fixed shape: a pulse of active particles, that represents a layer of growth, is accompanied by a monotone wave of inactive particles, the resulting static tissue, as demonstrated in Figure \ref{Diff_system_pics}. In this sense, the Reaction-Diffusion Equation qualitatively reproduces the macroscopic behavior of the stochastic dynamics, which models the well-behaved growth of an epithelial tissue.

For a wave speed $c>0$, a right-traveling wave solves Eq. \eqref{EQUA} via the Ansatz $A(x,t) = a(x-ct), I(x,t) = i(x-ct)$. We substitute $z=x - ct $, such that any traveling wave must be a solution of
\begin{equation}
\begin{aligned}
	&0 = a_{zz}  + c a_z +a -a  (a+i), \\
	&0 = c i_z + a  (a+i) + r  a.
\end{aligned} \label{WAVE_EQ}
\end{equation}

The occurrence of these seemingly stable traveling waves is quite surprising, since the System \eqref{EQUA} features a continuum of steady state solutions:
\begin{align}
A=0, \, I=K, \,  K \in \mathds{R}^+, \label{intro_steady_state_continuum}
\end{align}
which is due to the fact that the inactive particles do not degrade. This continuum of steady states represents the difficulty when studying the system: we first need to find out which limiting states are chosen by the growth process.

Hannezo \textit{et al.} presented a rich discussion of the Wave-Equation \eqref{WAVE_EQ} along with numerics and several heuristics that show a deep connection with the original FKPP-equation, and predicted some of the following results. The goal of this paper is to give necessary and sufficient conditions for the existence of such traveling wave solutions and to analyze the shape of the wave form. Our main result characterizes a family of pulled traveling waves:

\begin{theorem} \label{Main_Theorem}
Let $r \geq 0, c > 0$ and consider the System \eqref{EQUA} and its traveling wave solutions given by \eqref{WAVE_EQ}. Set $\oc : = \max \{ 0, 1-c^2/4 \} $. For each pair $\iminf, \ipinf \in \mathds{R}^+$ such that
\begin{align}
\ipinf \in [\oc,1), \qquad \iminf = 2 - \ipinf, \label{main_thm_relation}
\end{align}
there exists a unique bounded and positive traveling wave $a,i \in C^\infty(\mathds{R}, \mathds{R}^2)$ with speed $c$ such that
\begin{align}
\lim_{\ta \rightarrow \pm \infty} & a(z) = 0, \qquad  \lim_{\ta \rightarrow \pm \infty} i(z) = i_{\pm \infty}. \label{intro_main_them_limits}
\end{align}
The function $i(z)$ is decreasing, whereas $a(z)$ has a unique local and global maximum. If $\frac{c^2}{4} + \ipinf -1  = 0$, then convergence as $z \rightarrow + \infty$ is sub-exponentially fast and of order $ z \cdot e^{ - \frac{c}{2} z}$. If $ \frac{c^2}{4} + \ipinf -1  > 0$, then convergence as $z \rightarrow + \infty$ is exponentially fast. Convergence as $z \rightarrow - \infty$ is exponentially fast in all cases. The corresponding rates are
\begin{align}
\mu_{\pm \infty} = - \frac{c}{2} + \sqrt{ \frac{c^2}{4} + i_{\pm \infty} -1}.
\end{align}
Moreover, these are all bounded, non-negative, non-constant and twice differentiable solutions of Eq. \eqref{WAVE_EQ}.
\end{theorem}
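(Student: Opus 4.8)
The plan is to recast the wave system \eqref{WAVE_EQ} as the flow of the polynomial vector field on $\mathds{R}^3$ in coordinates $(a,b,i)$, $b:=a_z$, namely $a_z=b$, $b_z=-cb-a+a(a+i)$, $c\,i_z=-a(a+i+r)$, whose equilibrium set is the half-line $\{(0,0,K):K\ge 0\}$; traveling waves are precisely the heteroclinic orbits joining two of these equilibria, and any $C^2$ solution is automatically $C^\infty$ by bootstrapping. I would first extract the structure forced on any bounded, non-negative, non-constant solution. Such a solution has $a>0$ everywhere (if $a(z_0)=0$ then $z_0$ is an interior minimum, so also $a_z(z_0)=0$, and uniqueness for the second-order ODE forces $a\equiv 0$); since then $c\,i_z=-a(a+i+r)<0$, the profile $i$ is strictly decreasing, hence converges to limits $\iminf>\ipinf\ge 0$, and a standard argument (uniform continuity of $a,a_z$ plus integrability of $a(a+i+r)$) gives $a,a_z\to 0$ at both ends. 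Next, $a$ is unimodal: at an interior critical point $z_0$ one has $a_{zz}(z_0)=a(z_0)(a(z_0)+i(z_0)-1)$, so a local maximum forces $a(z_0)\le 1-i(z_0)$ and a local minimum forces $a(z_0)\ge 1-i(z_0)$; since $i$ strictly decreases, a minimum $z_m$ lying between two maxima, with left maximum $z_\ell<z_m$, would give $a(z_\ell)\le 1-i(z_\ell)<1-i(z_m)\le a(z_m)$, which is impossible — so there is a single critical point, the global maximum. Finally, linearization at the two ends, where $(a,b)$ obeys a small perturbation of $a_{zz}+c\,a_z+(1-i_{\pm\infty})a=0$, pins the admissible limits: at $+\infty$, non-negativity of $a$ rules out the oscillatory (complex-root) regime, forcing $\tfrac{c^2}{4}+\ipinf-1\ge 0$; and if $\ipinf\ge 1$ then $i\ge 1$ everywhere, so $(e^{cz}a_z)_z=e^{cz}a(a+i-1)>0$ makes $e^{cz}a_z$ strictly increasing, which contradicts its limits at $\pm\infty$ — hence $\ipinf\in[\oc,1)$, with $\mu_{+\infty}=-\tfrac{c}{2}+\sqrt{\tfrac{c^2}{4}+\ipinf-1}$ the (slower) decay rate and a double root $-c/2$ producing the sub-exponential order $z\,e^{-cz/2}$ exactly when the discriminant vanishes; at $-\infty$, the orbit can leave the equilibrium only along an unstable direction, which exists iff $\iminf>1$, with rate $\mu_{-\infty}$.

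It then remains to show that for every $\ipinf\in[\oc,1)$ there is a wave with $\iminf=2-\ipinf$, and that it is unique. I would build these by continuity in $\ipinf$, anchored at the degenerate end $\ipinf\uparrow 1$ where $\mu_{+\infty}\sim -(1-\ipinf)/c\to 0$. Setting $\varepsilon=1-\ipinf$ and blowing up via $a=\varepsilon^2\alpha$, $i=1+\varepsilon\jmath$, $z=\zeta/\varepsilon$, the system \eqref{WAVE_EQ} reduces at leading order to the planar conservative system $c\,\alpha_\zeta=\alpha\jmath$, $c\,\jmath_\zeta=-(1+r)\alpha$, whose orbits satisfy $\alpha=\tfrac{1}{2(1+r)}(1-\jmath^2)+\mathrm{const}$; its unique positive orbit with $\alpha\to 0$ at both ends runs from $\jmath=+1$ to $\jmath=-1$, which in the original variables is exactly $\iminf+\ipinf=2$. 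This simultaneously produces the almost-constant waves and makes the symmetry transparent in that regime. One then propagates it: the set of $\ipinf\in[\oc,1)$ admitting a wave with the stated properties is non-empty, open (by robustness of the transverse heteroclinic connection, which at $+\infty$ enters $(0,0,\ipinf)$ along the slow eigendirection $\mu_{+\infty}$), and closed (by uniform a priori bounds on $a$ and on the decay rates, which prevent the orbit from escaping, oscillating, or letting the two limiting equilibria collide as $\ipinf$ moves); here the identity $(1+r)\int_{\mathds R}a\,dz=c(\iminf-\ipinf)$, from integrating $a_{zz}+c\,a_z+c\,i_z+(1+r)a=0$ over $\mathds R$, is the constraint that, together with the structure of the connection, forces $\iminf=2-\ipinf$ along the whole continuation. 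Uniqueness for a given $\ipinf$ — hence the claim that these are exactly the non-constant bounded non-negative $C^2$ solutions — follows because any such orbit is, up to translation, the bounded positive branch of the one-dimensional unstable manifold of its left equilibrium $(0,0,\iminf)$, giving at most one wave per $\iminf$ and, through $\iminf=2-\ipinf$, at most one per $\ipinf$ (and entering $(0,0,\ipinf)$ along the fast stable direction instead of along $\mu_{+\infty}$ is incompatible with boundedness and non-negativity).

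The step I expect to be the main obstacle is exactly the propagation of the \emph{exact} symmetry $\iminf=2-\ipinf$ to the whole family — not merely to leading order near $\ipinf=1$ — together with uniform control of the connecting orbit (positivity of $a$, boundedness, non-return to the equilibrium line, correct approach directions) all along the continuation. The continuum of equilibria is what makes this delicate: moving $\ipinf$ moves the target equilibrium, so one must rule out that the heteroclinic connection breaks or drifts to the wrong endpoint; the planar reduced system, read as the leading order of a slow manifold, is the device that both suggests and, after a careful perturbation analysis, certifies the persistence of the symmetry.
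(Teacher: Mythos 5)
Your necessity analysis (strict positivity and unimodality of $a$, monotonicity of $i$, the non-oscillation constraint $\ipinf\geq\oc$, the exclusion of $\ipinf\geq 1$, the decay rates) is essentially the paper's, and your existence strategy is genuinely different: a singular blow-up $a=\varepsilon^2\alpha$, $i=1+\varepsilon\jmath$, $\zeta=\varepsilon z$ anchored at $\ipinf\uparrow 1$, followed by an open--closed continuation in $\ipinf$, versus the paper's continuation in the height $a_0$ of the maximum for fixed $i_0$ plus a shooting argument from the one-dimensional unstable manifold of $(0,0,\iminf)$. However, there is a genuine gap at precisely the step you yourself flag as the main obstacle: the exact relation $\iminf+\ipinf=2$ is never actually proved. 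The only global constraint you invoke is the linear mass balance $(1+r)\int_{\mathds{R}}a\,dz=c(\iminf-\ipinf)$, which ties $\int a$ to the drop in $i$ but determines neither quantity; and ``the structure of the connection'' cannot supply the missing equation, because the target equilibria form a continuum: Lyapunov stability of $S_{+\infty}$ guarantees that a perturbed orbit lands on \emph{some} nearby equilibrium, so your openness step shows the heteroclinic persists but gives no control over \emph{which} endpoint it selects. The blow-up only yields $\iminf+\ipinf=2+O(\varepsilon)$.

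The missing ingredient is a second, \emph{quadratic} integral identity, and this is exactly where the logistic nonlinearity enters. Writing $\mathscr{A}(z)=\int_{-\infty}^{z}a$, integrating $a(a+i)$ by parts and eliminating $b+i'$ via the wave equations gives
\[
\int_{\mathds{R}}a(a+i)\,dz=\ipinf\,\mathscr{A}(+\infty)+\frac{1+r}{2c}\,\mathscr{A}(+\infty)^2,
\]
while integrating the $a$-equation alone gives $\int_{\mathds{R}}a(a+i)\,dz=\mathscr{A}(+\infty)$. Dividing by $\mathscr{A}(+\infty)\neq 0$ and substituting your linear identity yields $1=\ipinf+\tfrac12(\iminf-\ipinf)$, i.e. $\iminf+\ipinf=2$, for \emph{every} bounded, non-negative, non-constant wave --- a necessary condition requiring no continuation at all (this is Proposition \ref{Prop_integrals}, Eq.~\eqref{third_mass}, and Corollary \ref{nec_cond_thm}; the same identity applied on $(-\infty,\ta_0]$ and $[\ta_0,+\infty)$ is also what lets the paper certify that the orbit actually reaches the attractor). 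Two smaller points: your assertion that approaching $(0,0,\ipinf)$ along the fast stable direction is incompatible with non-negativity is left unproved (the paper derives the slow rate $\mu_{+\infty}$ from the invariant triangle $T_c(\ipinf)$, whose boundary at the origin is the slow eigendirection), and your continuation must reach $\ipinf=\oc$, where for $c<2$ the two stable eigenvalues collide and the target loses simple hyperbolicity, which requires the extra limiting argument of Theorem \ref{a_0_I_limit}, Step 3.
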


\begin{figure}[h]
 		\centering
  		\begin{minipage}[c]{0.45\textwidth}
\begin{picture}(100,100)
	\put(0,0){\includegraphics[width=\textwidth]{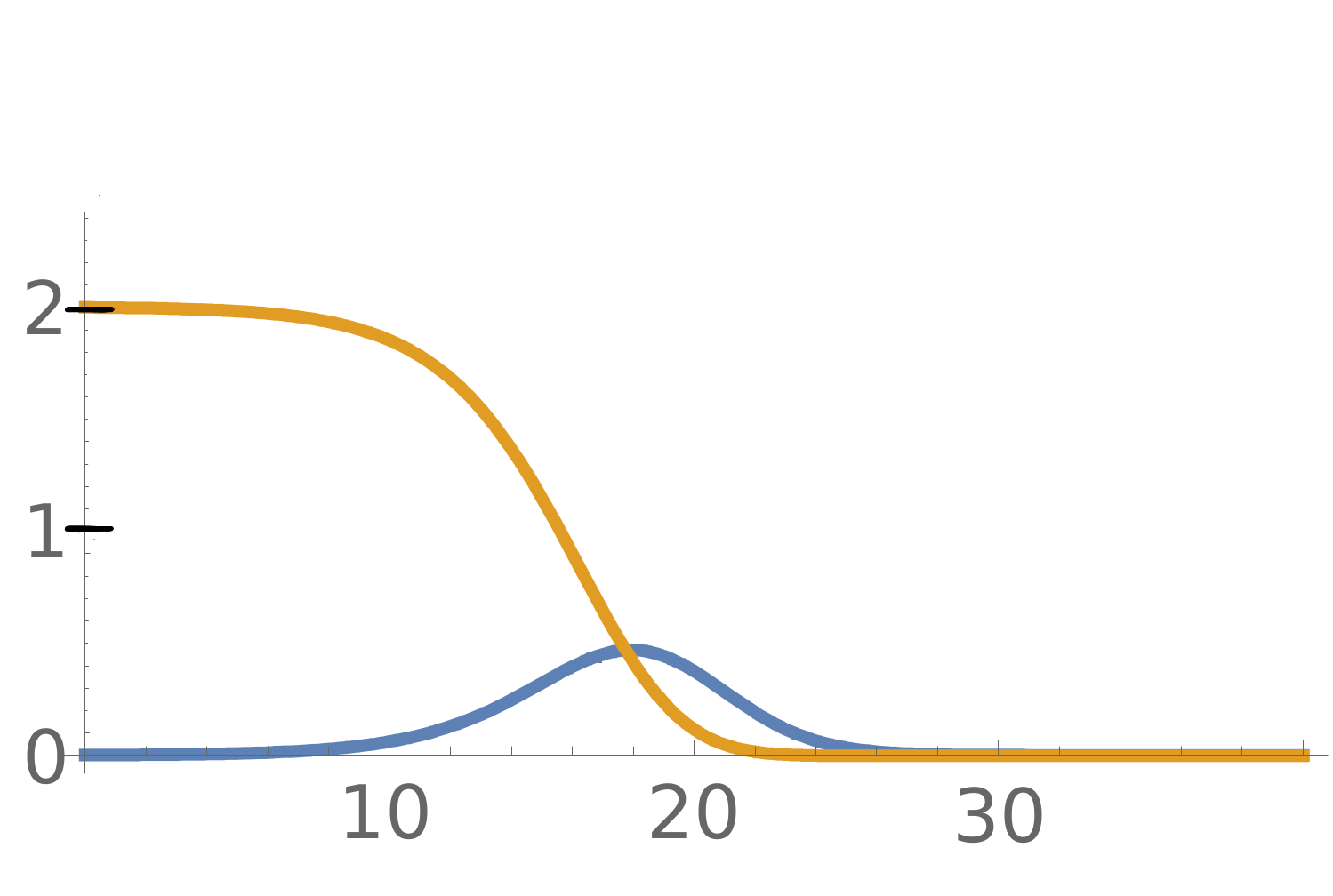}}
	\put(95,0){{\scalebox{1.5}{${\color{mygray}\ta}$}}}
	\put(20, 75){{\scalebox{1.8}{${\color{myorange}i}$}}}
	\put(20, 22){{\scalebox{1.8}{${\color{RoyalBlue}a}$}}}
\end{picture} 	  		
 		\end{minipage}
 		\hspace{0.5cm}
  		\begin{minipage}[c]{0.45\textwidth}
\begin{picture}(100,100)
	\put(0,-6){ \includegraphics[width=0.98\textwidth]{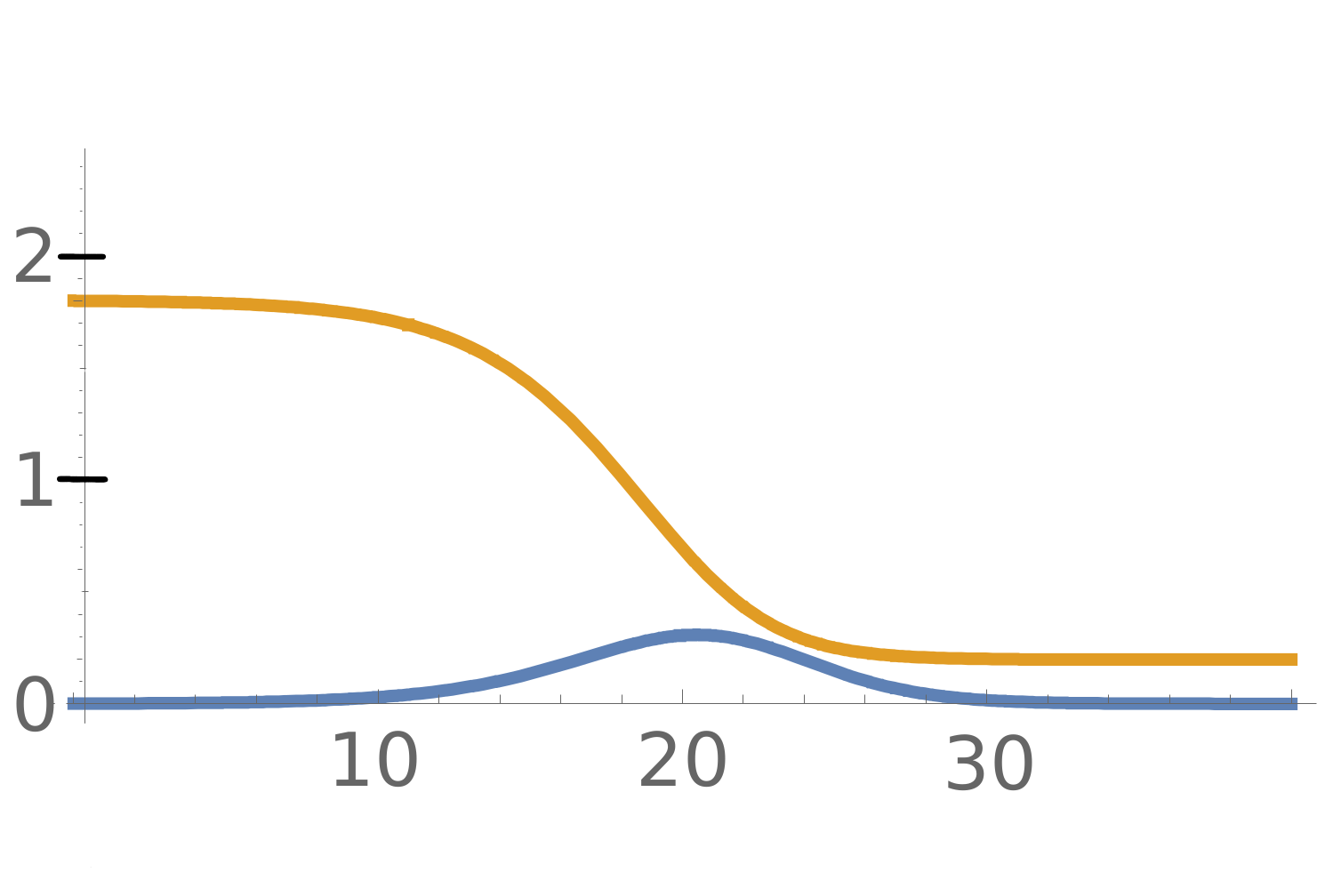}}
	\put(98,0){{\scalebox{1.5}{${\color{mygray}\ta}$}}}
	\put(23, 68){{\scalebox{1.8}{${\color{myorange}i}$}}}
	\put(23, 22){{\scalebox{1.8}{${\color{RoyalBlue}a}$}}}
\end{picture}
\end{minipage}
 		\caption{Two different traveling waves with speed $c = 2$. The limits of the left wave are given by $\iminf = 2$ and $\ipinf = 0$. The limits of the right wave are given by $\iminf = 1.8$ and $\ipinf = 0.2$.} 
 		\label{fig:trav_waves}
\end{figure}

Notice that this result is independent of the reproduction rate $r$, which affects the shape of the wave, but neither its limits nor the minimal speed of a positive solution. Hence, all non-negative and bounded traveling waves resemble the ones depicted in Figure \ref{fig:trav_waves}, consisting of a pulse of active particles and a monotone wave of inactive particles. These traveling wave solutions share many similarities with classical FKPP-waves of a single type of particles. Among other mathematical aspects, this will be discussed at the end of the paper, in Section \ref{ch:discussion}. Notably, Theorem \ref{Main_Theorem} analytically connects two continua of fixed points via a continuum of traveling waves. Our constructive approach is a novelty: we first prove the existence of almost constant solutions and then continuously deform these solution along the continuum of possible limits.

Figure \ref{Diff_system_pics} shows a simulation of the System \eqref{EQUA}, starting with a small initial amount of active particles. After a short transition phase, we observe a front with fixed shape. Asymptotically, it equals the unique traveling wave with limits $\iminf = 2, \ipinf = 0$ and speed $c=2$, which is the minimal possible wave speed for this pair of limits. We observed this behavior for all compact initial data that we chose. Moreover, this wave seems to be stable against perturbations, as briefly discussed in the concluding Section \ref{ch:discussion}. Even though it is only a first step into this direction, this paper sheds light at the ability of the Growth-Process \eqref{EQUA} to self-organize and at the robustness of this mechanism, e.g. against errors of individual particles. Our theoretical analysis fortifies the numerical and biological findings of Hannezo \textit{et al.}, where a simple set of local rules organizes the growth of a complex epithelial structure. The underlying assumption of a logistic growth is quite natural, so similar rules might drive and regulate other growth processes as well, without the need for guiding gradients.

\section{Outline of the paper}
\label{Sec:_Outline_full}

A sketch of the central ideas and techniques is presented in Section \ref{Sec:_Outline_full}. The identity $\iminf + \ipinf = 2$ is proved in Section \ref{ch:nec_cond_easy}. The asymptotic behavior around the stable and unstable set of the traveling waves is analyzed in Section \ref{ch:fix_point_results}. A non-negative trapping region of a lower-dimensional system in coordinates $(a, b)$ is analyzed in Section \ref{ch:Attractor}. We use our knowledge about the lower-dimensional system to prove the existence of a suitable attractor of the full system in Section \ref{ch:full_system_attractor}. Then, we connect the unstable manifold of the unstable set with this attractor, see Section \ref{ch:Connection}. We complete the proof of Theorem \ref{Main_Theorem} in Section \ref{ch:main_thm_proof}. In Section \ref{ch:discussion}, we highlight the similarity of the traveling waves with those of the original FKPP-equation and give a short outlook at their stability.

\subsection{Identifying the correct limits} \label{Sec:_Outline_1}

We reformulate the System \eqref{WAVE_EQ} for a traveling wave as an equivalent system of first-order ODEs. Denoting differention with respect to $z$ by a prime, we introduce the auxiliary variable $a'=\apr$, so that 
\eqref{WAVE_EQ} becomes
\begin{align}
a' &= b, \nonumber \\
b' &= a(a+i) - a - cb, \label{3D_Flow_Def} \\
i' &= -\frac{1}{c} a \left (a+i+r \right ), \nonumber
\end{align}
for some $c>0,r \geq 0$. We call a solution of Eq. \eqref{3D_Flow_Def} non-negative if $a,i \geq 0$. If a solution is in $C^1(\mathds{R}, \mathds{R}^3)$, then it is also in $C^\infty$ by a simple induction. This equation has a continuum of non-negative fixed points, similar to that of the PDE, cf. \eqref{intro_steady_state_continuum}:
\begin{align}
a=b=0, \quad i \in \mathds{R^+}. \label{intro_fixed_points_ODE}
\end{align}

Thus, in the first place, we need to find out which of these fixed points can be considered as limits of right-traveling waves. Any bounded and non-negative solution of System \eqref{3D_Flow_Def} can not be periodic and must converge since $ci'=-  a(a+i+r) \leq 0$. It is now evident that the limits at $z = \pm \infty$ must be fixed points of Eq. \eqref{3D_Flow_Def}, thus we denote them as $(a,\apr,i)=(0,0,i_{\pm \infty})$. Under mild assumptions regarding integrability, we can interrelate two different points on a given traveling wave, see Section \ref{ch:nec_cond_easy}. Most importantly, this leads to the correspondence of the limits
\begin{align}
\ipinf + \iminf=2. \label{nec_cond_intro_sum}
\end{align}
In view of this, monotonicity of $i$ implies that $\iminf \in (1,2]$ and $\ipinf \in [0,1)$.

The fixed points of the ODE System \eqref{3D_Flow_Def} are not isolated, hence its Jacobian $D$ is degenerate there. It is easily verified that $D$ is given by
\begin{align}
D_{(a,\apr,i)} & =
\begin{pmatrix}
0 & 1 & 0 \\
2a + i -1 & -c & a \\
-\frac{1}{c}(2a+i+r) & 0 & - \frac{a}{c}
\end{pmatrix}.
\end{align}
At a fixed point $(a,b,i) = (0,0,K)$, the eigenvalues of $D_{(0,0,K)}$ are
\begin{align}
\lambda_0 = 0, \hspace{1cm}  \lambda_\pm = -\frac{c}{2} \pm \sqrt{\frac{c^2}{4} + K -1}. \label{Eigenvalues_DF_intro}
\end{align}
Hence, we can not apply the classical Theorem of Grobmann-Hartmann to linearize the asymptotic behavior around the fixed points. We apply center manifold theory to work out the higher moments of the approximation, see Section \ref{ch:fix_point_results}. The center manifold coincides with the continuum of fixed points. This implies that asymptotically, there is no flow along the direction of the eigenvector $(a,\apr,i) = (0,0,1)$ with zero eigenvalue. Hence, the asymptotic flow around any fixed point is two-dimensional and the stability of the fixed point $(0,0,K)$ is dictated by the two eigenvalues $\lambda_\pm$. When $K>1$, the fixed point is unstable, while for $K<1$, it is stable.

At the same time, the analysis of the asymptotic behavior also yields a necessary condition on the speed $c$ of a non-negative wave. A traveling wave can only be non-negative if $a(\ta)$ does not spiral while converging to $0$. Therefore, the two eigenvalues $\lambda_\pm$ at the limiting fixed point must be real-valued. In view of \eqref{Eigenvalues_DF_intro}, for a fixed point $(0,0,K)$, this is given if
\begin{align}
\frac{c^2}{4} + K -1 \geq 0. \label{nec_cond_intro_oscillate}
\end{align}
Thus, if the stable fixed point $(0, 0, \ipinf)$ is the limit of a non-negative traveling wave, where $\ipinf \in [0,1)$, it must by \eqref{nec_cond_intro_oscillate} further hold that
\begin{align}
\frac{c^2}{4} +\ipinf  -1 \geq  0 \quad  \Leftrightarrow \quad \ipinf \geq \oc=\max \{ 0, 1-\frac{c^2}{4} \},
\end{align}
as in Theorem \ref{Main_Theorem}. In other words, $\oc$ is the minimal limiting density of inactive particles that is necessary for the existence of a non-negative traveling wave with speed $c$.

\subsection{Construction of a traveling wave} 
\label{Sec:_Outline_2}
 
We will explicitly construct a non-negative traveling wave such that the two necessary conditions $\ipinf \geq \oc$ and $\ipinf + \iminf = 2$ are fulfilled. Two key features of the model make it tractable: first, the monotonicity of $i(z)$ allows us to investigate convergence of a sub-system that arises for a fixed value of $i$, and then lift our result to almost constant solutions of the full system. Second, for extending this result to non-small solutions, we lean on an integral equation that allows us to interrelate two points on a given trajectory. However, the central Proposition \ref{Prop_integrals} depends essentially on the logistic growth of the active particles. Apart from this, our approach seems to be applicable to a broader class of systems.

Regarding the ODE System \eqref{3D_Flow_Def}, our analysis of the flow around the fixed points in Section \ref{ch:fix_point_results} reveals a suitable \textit{unstable set}
\begin{align}
S_{-\infty} &:= \big\{ (0,0,i) : \, i \in (1,2] \big \}, \label{eq_unstable_set}\\
\intertext{and a suitable \textit{stable set}}
S_{+\infty} &:= \big \{ (0,0,i) : \, i \in [0,1) \big \}.
\label{eq_stable_set}
\end{align}
Each point $(0,0,\iminf) \in S_{-\infty}$ has an unstable manifold of dimension one. Its restriction to $ a \geq 0 $ is the only possible candidate for the tail of a non-negative traveling wave as $\ta \rightarrow - \infty$. Each point $(0,0,\ipinf) \in S_{+\infty}$ is Lyapunov stable, which can also be seen in Figure \ref{fig:intro_wave_phase_plot}.

\begin{figure}[h]
\vspace{2cm}
 		\centering
 		\begin{minipage}[c]{0.45\textwidth}
  \begin{picture}(100,100)
	\put(0,0){\includegraphics[width=0.9\textwidth]{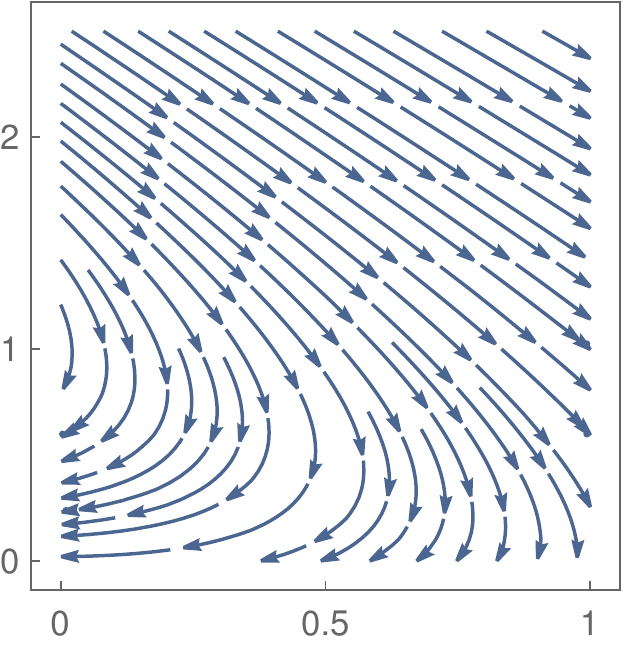}}
	\put(-8,90){\scalebox{1.8}{${\color{mygray}i}$}}
	\put(90,-5){\scalebox{1.8}{${\color{mygray}a}$}}
\end{picture}
\end{minipage}
\vspace{0.3cm}
 		\caption{Two-dimensional phase portrait of $(a,i)$ of traveling waves \eqref{3D_Flow_Def} for $c=2$ and $r=0$, omitting the coordinate $b=a'$. A unique trajectory emerges from each point in $S_{- \infty}$ (where $\iminf >1$) in positive direction of $a$ and converges to $S_{+ \infty}$ (where $\ipinf <1$). Notice the correspondence $\iminf + \ipinf = 2$ of the limits.}
 		\label{fig:intro_wave_phase_plot}
\end{figure}

To begin with, we let $(0,0,\iminf) \in S_{ - \infty}$, where $\iminf \in (1,2-\oc]$, and follow its unstable manifold in positive direction of $a$. We prove that there is a finite phase-time $\ta_0$ such that $\apr(\ta_0) = 0$: the trajectory reaches a local maximum of active particles, again see Figure \ref{fig:intro_wave_phase_plot}. We denote it as $(a_{\ta_0}, 0,i_{\ta_0})$. This is carried out in Section \ref{ch:Connection}.

Thus, for finding a suitable attractor of $S_{+\infty}$, we analyze solutions that start in points of type $(a,b,i) = (a_0,0,i_0)$. We first analyze the lower-dimensional subsystem in coordinates $(a,b)$, imposing a fixed value of $i$, which is done Section \ref{ch:Attractor}. The monotonicity of $i(z)$ allows us to lift this result to the full system, see Section \ref{ch:full_system_attractor}. Here, the continuum of fixed points comes at help: we first prove the existence of almost constant solutions, where $a \ll 1$ and $i \sim i_0$, that stay non-negative and converge. Then, we continuously deform these solutions: the Lyapunov-stability of the limits in $S_{+\infty}$ implies continuity of the entire trajectory up to $z =  + \infty$ in initial data. This results in sharp conditions regarding $(a_0,0,i_0)$ such that the trajectory stays non-negative and converges.

We show that the first local maximum $(a_{\ta_0}, 0,i_{\ta_0})$ along the instable manifold of $(0,0,\iminf)$ does fulfill these conditions, see again Section \ref{ch:Connection}. The technique is the same as for proving the identity $\ipinf + \iminf = 2$, which is the starting point of our analysis and presented in the next section. Finally, the proof of Theorem \ref{Main_Theorem} is completed in Section \ref{ch:main_thm_proof}, when we bring together all the different pieces. The resulting continuous family of solutions is sketched in Figure \ref{fig:intro_wave_phase_plot}.

\section{The mapping of the limits $\boldsymbol{\iminf + \ipinf = 2}$} \label{ch:nec_cond_easy}

We first verify global integrability of a non-negative solution:

\begin{lemma} \label{Lem_global_integrability}
Let $a(\ta),b(\ta),i(\ta)$ be a smooth, bounded and non-negative traveling wave that solves the ODE System \eqref{3D_Flow_Def}. Then, as $\ta \rightarrow \pm \infty$, $a(\ta)$ vanishes and $i(\ta)$ converges, and $a,b,b',i' \in L^1(\mathds{R})$. Moreover, $i(\ta)$ is decreasing and $a(\ta)$ has a unique global and local maximum.

\begin{proof}
Let $a(\ta),b(\ta),i(\ta)$ be a smooth, bounded and non-negative solution of Eq. \eqref{3D_Flow_Def}. Since $ci' = -a(a+i+r)\leq 0$, it holds that $i(\ta)$ is decreasing and by boundedness must converge as $\ta \rightarrow \pm \infty$, so $i' \in L^1(\mathds{R})$. The two limits must be fixed points, so they are given by some $(a,b,i) = (0,0,\iminf)$ and $(0,0,\ipinf)$. Equality is only given if $a(\ta) \equiv 0$. If not $a(\ta) \equiv 0$, then there is at least one local maximum of active particles, which we denote as $(a_0,0,i_0)$.

At this point, $a'' = b' = a_0 (a_0 + i_0 -1) \leq 0$, so either $a_0=0$ and the wave is constant, or $a_0 + i_0 \leq 1$. In the second case case, assume that there is also a local minimum of $a(\ta)$, denoted as $(a_m,0,i_m)$. Since $a(\ta)$ vanishes as $\ta \rightarrow \pm \infty$, we may assume that this be the first local minimum after passing through $(a_0,0,i_0)$. As before, this is already a fixed point or $a_m + i_m \geq 1$. Since $i(\ta)$ is decreasing, $a(\ta)$ must have been increasing, a contradiction to the assumption that this is the first local minimum after the maximum $(a_0,0,i_0)$. Thus, there is only one local maximum of active particles, which is also the global one. Further, this implies $b \in L^1(\mathds{R})$. By $ci'= -a(a+i+r) \leq 0$, we know that $a(a+i+r)$ is also in $L^1(\mathds{R})$. We integrate $b'+cb +a = a(a+i)$ over the finite interval $[-M,M]$, then send the boundaries to $\pm \infty$:
\begin{align}
	  \int_{-M}^{M} b'(\ta) + c b(\ta) + a(\ta) \, d \ta 
	 =   \int_{-M}^{M} a(\ta) \cdot \big[ a(\ta) + i(\ta) \big] \, d \ta.
\end{align}
We know that the right-hand is integrable since $i' \in L^1(\mathds{R})$, and that both $a(\pm M)$ and $b(\pm M)$ vanish as $M \rightarrow +\infty$. This implies
\begin{align}
 \int_{\mathds{R}} a(\ta) \text{ } d\ta &= 
\lim_{M \rightarrow + \infty} \Big [ b(M) -b(-M) + c \cdot \big [a(M) - a(-M) \big] + \int_{-M}^{M} a(\ta) \, d \ta \Big ] \nonumber \\ 
	  & = \int_{\mathds{R}} a(\ta) \cdot \big[ a(\ta)+i(\ta) \big] \, d\ta .
\end{align}
Hence also $a \in L^1(\mathds{R})$, since $a \geq 0$. Finally, as a sum of integrable terms, also $b' \in L^1(\mathds{R})$.
\end{proof}
\end{lemma}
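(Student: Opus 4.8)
The plan is to prove the four groups of assertions in sequence, each feeding into the next: first monotonicity of $i$ and its convergence, then that the orbit limits are fixed points (so $a,b\to 0$), then uniqueness of the maximum of $a$, and finally the four $L^1$-memberships.

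I would start from the third equation: $c i' = -a(a+i+r)\le 0$ because $a,i\ge 0$, $r\ge 0$, $c>0$, so $i$ is weakly decreasing, and being bounded it converges to limits $i_{\pm\infty}$ as $z\to\pm\infty$; hence $\int_{\mathds{R}}|i'| = i_{-\infty}-i_{+\infty}<\infty$, so $i'\in L^1(\mathds{R})$. Next, since the orbit is bounded, its $\omega$- and $\alpha$-limit sets are nonempty, compact and invariant; on the $\omega$-limit set $i$ is constant, so $i'\equiv 0$ there, which by $i'=-\frac1c a(a+i+r)$ forces $a\equiv 0$ (if $a>0$ then $a+i+r>0$, except in the degenerate case $i_{+\infty}=r=0$ where $a^2=0$ still gives $a=0$), and then $a'=b$ together with invariance forces $b\equiv 0$. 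So the $\omega$-limit set is the single point $(0,0,i_{+\infty})$, symmetrically at $-\infty$, and therefore $a(z),b(z)\to 0$ and $i(z)\to i_{\pm\infty}$; equality $i_{-\infty}=i_{+\infty}$ can hold only for the constant solution $a\equiv 0$.

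Assuming $a\not\equiv 0$, the uniqueness of the maximum is where the monotonicity of $i$ really enters. Since $a\ge 0$ is continuous, vanishes at $\pm\infty$ and is positive somewhere, it attains a positive global maximum at some $z_0$, and there $0\ge a''(z_0)=b'(z_0)=a(z_0)\big(a(z_0)+i(z_0)-1\big)$; as $a(z_0)=0$ would force $a\equiv 0$, I conclude $a(z_0)+i(z_0)\le 1$. If $a$ also had a local minimum, let $z_m$ be the first one past $z_0$; the same computation gives $a(z_m)+i(z_m)\ge 1$, and since $a>0$ on $[z_0,z_m]$ (a zero there would be a minimum of the non-negative $a$, hence a fixed point, forcing $a\equiv 0$) the relation $i'<0$ there makes $i$ strictly decreasing, so $i(z_m)<i(z_0)$ and hence $a(z_m)\ge 1-i(z_m)>1-i(z_0)\ge a(z_0)$. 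But $a$ drops below $a(z_0)$ immediately to the right of $z_0$ (it cannot be constant there, by the ODE), so the minimum of $a$ over $[z_0,z_m]$ is attained at an interior point, giving a local minimum of $a$ strictly inside $(z_0,z_m)$ — contradicting the choice of $z_m$. Hence $a$ has no local minimum: its only critical point is the global maximum $z_0$, and $a$ is monotone on $(-\infty,z_0]$ and on $[z_0,\infty)$.

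Finally I would read off the integrability statements in order. By the monotonicity just obtained, $\int_{\mathds{R}}|b|=\int_{\mathds{R}}|a'|$ is the total variation of $a$, equal to $2a(z_0)<\infty$, so $b\in L^1(\mathds{R})$. Since $0\le a(a+i)=a(a+i+r)-ra\le a(a+i+r)=-c i'$, also $a(a+i)\in L^1(\mathds{R})$. Integrating $b'+cb+a=a(a+i)$ over $[-M,M]$ gives $\int_{-M}^{M}a = \int_{-M}^{M}a(a+i) - \big(b(M)-b(-M)\big) - c\big(a(M)-a(-M)\big)$; letting $M\to\infty$ the boundary terms vanish and the first integral converges, so $\lim_M\int_{-M}^M a$ exists and is finite, and since $a\ge 0$ this means $a\in L^1(\mathds{R})$. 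Then $b'=a(a+i)-a-cb$ is a sum of $L^1$ functions, so $b'\in L^1(\mathds{R})$. I expect the uniqueness-of-maximum step to be the main obstacle, since it is the only place coupling the sign of $a''=b'$ at critical points with the monotonicity of $i$, and both the total-variation bound for $b$ and the closing integral identity for $a$ rely on it; the secondary delicate point is ruling out oscillation near the ends when deducing $a,b\to 0$, where the degenerate case $i_{+\infty}=r=0$ must be treated separately.
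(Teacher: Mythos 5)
Your proof is correct and follows essentially the same route as the paper: monotonicity of $i$ from $ci'=-a(a+i+r)\le 0$, the sign of $a''=b'$ at critical points combined with the decrease of $i$ to rule out local minima, the total-variation bound for $b$, and the integrated identity $b'+cb+a=a(a+i)$ to get $a\in L^1$ and then $b'\in L^1$. The only differences are that you justify convergence of $(a,b)$ to $(0,0)$ explicitly via the $\omega$-limit set (which the paper asserts with a one-line remark) and spell out the interior-minimum contradiction in slightly more detail; both are faithful elaborations rather than a different argument.
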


The following Proposition \ref{Prop_integrals} will be used several times to interrelate two points $(a_1,0,i_1), (a_2,0,i_2)$ on a traveling wave, where $b_i = 0$. By the previous Lemma, the necessary conditions regarding integrability are always verified for non-negative and bounded solutions.

\begin{proposition} \label{Prop_integrals}
Let $a(\ta),b(\ta),i(\ta)$ be a smooth and bounded solution of the ODE System \eqref{3D_Flow_Def} on some interval $[z_1, z_2]$, where $-\infty \leq z_1 \leq z_2 \leq + \infty$. Assume that $b(z_1) = b(z_2) = 0$. Further, assume that $a,b,b',i'$ are integrable and define $\mathscr{A}(t) := \int_{z_1}^t a(\ta) \, d\ta$. The following three identities hold:
\begin{align}
\int_{z_i}^{z_2} a(z) \big [ a(z) + i(z) ] \, dz &= \mathscr{A}(z_2) + c \cdot \big [ a(z_2) - a(z_1) \big], \label{first_mass} \\
i(z_1) - i(z_2)  &= \frac{1+r}{c} \mathscr{A}(z_2), \label{second_mass}\\
\int_{z_i}^{z_2} a(z) \big [ a(z) + i(z) ] &= \big [ i(z_2) + a(z_2) \big] \cdot \mathscr{A}(z_2), \nonumber \\
 & \hspace{0.5cm} + \frac{1+r}{2c} \mathcal{A}(z_2)^2 + \frac{a(z_1)^2 -a(z_2)^2}{2c}. \label{third_mass}
\end{align}
\begin{proof}
Any solution of the ODE System \eqref{3D_Flow_Def} also fulfills the original Wave Equations \eqref{WAVE_EQ}. We integrate these over $[z_1,z_2]$, substitute $a'=b$ and use that $b( z_i) = 0$. This directly proves \eqref{first_mass} and \eqref{second_mass}. Regarding Eq. \eqref{third_mass}, note that by integration by parts:
\begin{align} 
\begin{aligned}
	&\int_{z_1}^{z_2} a(\ta) \cdot \big [ a(\ta) + i(\ta) \big ] \, d\ta = \Big{|}_{z_1}^{z_2} \mathscr{A} \cdot (a+i) \\
	& \hspace{5cm} - \int_{z_1}^{z_2} \mathscr{A}(\ta) \cdot \big [ b(\ta) + i'(\ta) \big ] \, d\ta \\
	&= \big [ i(z_2) + a(z_2) \big] \cdot \mathscr{A}(z_2)
	+ \int_{z_1}^{z_2} \mathscr{A}(\ta) \cdot \frac{1}{c} \big[ (1+r)a(\ta) + b'(\ta) \big] \, d\ta  \\
	&= \big [ i(z_2) + a(z_2) \big] \cdot \mathscr{A}(z_2)
	+ \frac{1+r}{2c} \Big{|}_{z_1}^{z_2} \mathscr{A}^2
	+ \frac{1}{c} \int_{z_1}^{z_2} \mathscr{A}(\ta)b'(\ta) \, d\ta  \\
	&= \big [ i(z_2) + a(z_2) \big] \cdot \mathscr{A}(z_2) + \frac{1+r}{2c} \mathscr{A}(z_2)^2 + \frac{a(z_1)^2 -a(z_2)^2}{2c}.
	\end{aligned}
\end{align}
\end{proof}
\end{proposition}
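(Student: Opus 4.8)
The plan is to reduce all three identities to elementary integrations of the second-order Wave Equations \eqref{WAVE_EQ} — which every solution of the first-order system \eqref{3D_Flow_Def} also satisfies — over the interval $[z_1,z_2]$, using the hypothesis $b(z_1)=b(z_2)=0$ to discard the boundary contributions of $a_z=b$, and, for the third identity, integrating by parts against the antiderivative $\mathscr{A}$. Global integrability of $a,b,b',i'$ is what legitimizes each step in the degenerate case $z_i=\pm\infty$; in the compact case everything is automatic.

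For \eqref{first_mass} I would integrate $0 = a_{zz}+ca_z+a-a(a+i)$ over $[z_1,z_2]$: the term $\int a_{zz}\,dz = b(z_2)-b(z_1)$ vanishes by assumption, $\int ca_z\,dz = c\,(a(z_2)-a(z_1))$ is a pure boundary term, and $\int a\,dz = \mathscr{A}(z_2)$ since $\mathscr{A}(z_1)=0$; rearranging gives \eqref{first_mass}. For \eqref{second_mass} I would integrate the second Wave Equation $0 = ci_z + a(a+i)+ra$, which yields $c\,(i(z_2)-i(z_1)) + \int_{z_1}^{z_2} a(a+i)\,dz + r\,\mathscr{A}(z_2)=0$, and then substitute the value of $\int a(a+i)\,dz$ just furnished by \eqref{first_mass} and solve for $i(z_1)-i(z_2)$, keeping careful track of the surviving boundary terms at the two endpoints.

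The only identity demanding actual work is \eqref{third_mass}. Here I would evaluate $\int_{z_1}^{z_2} a\,(a+i)\,dz$ by writing $a=\mathscr{A}'$ and integrating by parts, $\int \mathscr{A}'(a+i)\,dz = \big[\mathscr{A}\,(a+i)\big]_{z_1}^{z_2} - \int \mathscr{A}\,(b+i')\,dz$, where the boundary term collapses to $\mathscr{A}(z_2)\,(a(z_2)+i(z_2))$ because $\mathscr{A}$ is anchored at $z_1$. The key algebraic step is to eliminate $b+i'$ through the ODE: from $b' = a(a+i)-a-cb$ one gets $i' = -\tfrac1c\big(b'+(1+r)a\big)-b$, hence $b+i' = -\tfrac1c\big(b'+(1+r)a\big)$, which trades an awkward quantity for the two "good" terms $b'$ and $a$. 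Then $\int \mathscr{A}a\,dz = \tfrac12\big[\mathscr{A}^2\big]_{z_1}^{z_2} = \tfrac12\mathscr{A}(z_2)^2$, and a second integration by parts gives $\int \mathscr{A}b'\,dz = \big[\mathscr{A}b\big]_{z_1}^{z_2} - \int ab\,dz = -\tfrac12\big[a^2\big]_{z_1}^{z_2}$, the boundary term vanishing precisely because $b(z_1)=b(z_2)=0$. Collecting these pieces produces exactly \eqref{third_mass}.

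I expect no genuine obstacle — the whole argument is a short chain of integrations by parts. The points that require attention are the double use of the hypothesis $b(z_i)=0$ (once to kill $\big[a_z\big]_{z_1}^{z_2}$ in \eqref{first_mass}, once to kill $\big[\mathscr{A}b\big]_{z_1}^{z_2}$ in \eqref{third_mass}), the bookkeeping of which boundary terms survive at $z_1$ versus $z_2$ (all $\mathscr{A}$-contributions at $z_1$ drop out, those at $z_2$ remain), and, when $z_i=\pm\infty$, the check that every integrand appearing is in $L^1$ — which is exactly what the integrability hypothesis (supplied by Lemma \ref{Lem_global_integrability} in all the applications) guarantees. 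The reliance on the \emph{logistic} form of the $A$-nonlinearity enters only through the clean cancellation $b+i' = -\tfrac1c(b'+(1+r)a)$, which is what makes \eqref{third_mass} close.
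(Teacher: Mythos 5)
Your proposal is correct and follows exactly the paper's route: integrate the two wave equations over $[z_1,z_2]$ for \eqref{first_mass} and \eqref{second_mass}, then integrate by parts against $\mathscr{A}$ and eliminate $b+i'$ via $b+i'=-\tfrac1c\big(b'+(1+r)a\big)$ for \eqref{third_mass}; your handling of the boundary terms $[\mathscr{A}b]_{z_1}^{z_2}$ and $[a^2]_{z_1}^{z_2}$ matches the paper's computation line for line. One remark: the ``careful bookkeeping of surviving boundary terms'' you promise for \eqref{second_mass} actually yields
\begin{align}
i(z_1)-i(z_2) \;=\; \frac{1+r}{c}\,\mathscr{A}(z_2) \;+\; a(z_2)-a(z_1),
\end{align}
i.e.\ the identity as printed omits the term $a(z_2)-a(z_1)$ (equivalently, the clean form is $\big[a(z_1)+i(z_1)\big]-\big[a(z_2)+i(z_2)\big]=\tfrac{1+r}{c}\mathscr{A}(z_2)$, obtained from $c(a+i)'=-(1+r)a-b'$). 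This discrepancy is harmless in Corollary \ref{nec_cond_thm}, where $a(\pm\infty)=0$, and the corrected version is what is implicitly used downstream: one checks that \eqref{I_+infty_formula} and \eqref{I_-inf_to_a0} are consistent only with the identity including the extra boundary term. So your derivation is the right one; do not force it to agree with \eqref{second_mass} as stated.
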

\textit{Remark}: Equations \eqref{first_mass} and \eqref{second_mass} encode a mass transfer from the active to the inactive particles and are not specific for the chosen reactions. It is the quadratic Eq. \eqref{third_mass} that relies on a logistic saturation mechanism, we do not see a (direct) way to generalize this result.\\

Given Proposition, the identity $\iminf + \ipinf=2$ is a mere

\begin{corollary}[Limits of traveling waves] \label{nec_cond_thm}
Let $a(\ta), b(\ta),i(\ta)$ be a non-negative and bounded traveling wave that solves the ODE System \eqref{3D_Flow_Def}, and denote its limits as $(a,b,i) = (0,0,i_{\pm \infty})$. Either $\int_{\mathds{R}}a(z) \, dz = 0$ implies $\ipinf = \iminf$, or the following identity holds:
\begin{equation}
\iminf + \ipinf = 2 .
\end{equation}

\begin{proof}
We apply Proposition \eqref{Prop_integrals}. Since $a(\pm \infty) = 0$, the Equations \eqref{first_mass}, \eqref{second_mass} and \eqref{third_mass} simplify to
\begin{align}
\mathscr{A}(+\infty) &= \ipinf \cdot \mathscr{A}(+\infty) + \frac{1+r}{2c} \mathscr{A}(+\infty)^2, \\
\frac{(1+r)}{c} \mathscr{A}(+\infty) &= \iminf -\ipinf.
\end{align}
Either $\mathscr{A}(+\infty) = 0$ implies $\ipinf = \iminf$, or we can divide the first equation by $\mathscr{A}(+\infty)$. Solving the resulting linear system proves the claim.
\end{proof}
\end{corollary}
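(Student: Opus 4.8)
The plan is to apply Proposition~\ref{Prop_integrals} in one stroke, taking the endpoints to be $z_1 = -\infty$ and $z_2 = +\infty$. The first thing to settle is that the hypotheses of that proposition survive this passage to an unbounded interval. By Lemma~\ref{Lem_global_integrability}, a bounded non-negative traveling wave satisfies $a,b,b',i' \in L^1(\mathds{R})$, $a(\ta)$ vanishes at $\pm\infty$, and $a$ has a single local maximum, so $b = a'$ also tends to $0$ at $\pm\infty$; hence $b(z_1)=b(z_2)=0$ in the limiting sense, $\mathscr{A}(+\infty) = \int_{\mathds{R}} a(\ta)\,d\ta$ is finite, and $\int_{\mathds{R}} a(a+i)$ converges as well. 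This legitimizes reading the three identities \eqref{first_mass}, \eqref{second_mass}, \eqref{third_mass} at $z_1=-\infty$, $z_2=+\infty$.

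Next I would specialize those identities. Since $a(\pm\infty)=0$, every boundary term of the form $a(z_2)-a(z_1)$ or $a(z_1)^2-a(z_2)^2$ drops, so \eqref{first_mass} and \eqref{third_mass} collapse to
\begin{align}
\int_{\mathds{R}} a(\ta)\,[a(\ta)+i(\ta)]\,d\ta &= \mathscr{A}(+\infty), \nonumber \\
\int_{\mathds{R}} a(\ta)\,[a(\ta)+i(\ta)]\,d\ta &= \ipinf \cdot \mathscr{A}(+\infty) + \frac{1+r}{2c}\,\mathscr{A}(+\infty)^2, \nonumber
\end{align}
while \eqref{second_mass} becomes $\iminf - \ipinf = \frac{1+r}{c}\,\mathscr{A}(+\infty)$.

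The conclusion is then a two-line case distinction. Equating the two displays gives $\mathscr{A}(+\infty) = \ipinf\,\mathscr{A}(+\infty) + \frac{1+r}{2c}\,\mathscr{A}(+\infty)^2$. If $\int_{\mathds{R}} a = \mathscr{A}(+\infty) = 0$, the identity for $\iminf - \ipinf$ forces $\iminf = \ipinf$, which is the first alternative (in fact then $a\equiv 0$ and the wave is constant). Otherwise I divide by $\mathscr{A}(+\infty)$ to get $1 = \ipinf + \frac{1+r}{2c}\,\mathscr{A}(+\infty)$, and substituting $\frac{1+r}{2c}\,\mathscr{A}(+\infty) = \tfrac12(\iminf - \ipinf)$ yields $1 = \tfrac12(\ipinf + \iminf)$, i.e.\ $\iminf + \ipinf = 2$.

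There is no hard analytic step here: the entire content has been front-loaded into Proposition~\ref{Prop_integrals} and Lemma~\ref{Lem_global_integrability}, and what remains is linear algebra in the two unknowns $\mathscr{A}(+\infty)$ and $\ipinf$. The only point requiring care — and the place I would be most explicit — is the limit $z_2 \to +\infty$, $z_1 \to -\infty$: one must confirm that the boundary contributions $a(\pm M)$, $b(\pm M)$ genuinely vanish and that $\mathscr{A}$ and $\int a(a+i)$ converge, so that the finite-interval identities pass to the limit. All of this is exactly what Lemma~\ref{Lem_global_integrability} delivers, so no new estimate is needed.
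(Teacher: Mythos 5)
Your proposal is correct and follows essentially the same route as the paper: apply Proposition~\ref{Prop_integrals} with $z_1=-\infty$, $z_2=+\infty$ (justified by the integrability from Lemma~\ref{Lem_global_integrability}), drop the boundary terms since $a(\pm\infty)=0$, and solve the resulting system in $\mathscr{A}(+\infty)$ and the limits. Your explicit substitution $\frac{1+r}{2c}\mathscr{A}(+\infty)=\tfrac12(\iminf-\ipinf)$ just spells out the ``solving the resulting linear system'' step that the paper leaves implicit.
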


\section{Asymptotics around the fixed points} \label{ch:fix_point_results}

Let us recall that the Jacobian $D_{(0,0,K)}$ of the ODE System \eqref{3D_Flow_Def} at a fixed point $(0,0,K)$ has eigenvalues
\begin{align}
\lambda_0 = 0, \hspace{1cm}  \lambda_\pm = -\frac{c}{2} \pm \sqrt{\frac{c^2}{4} + K -1}. \label{Eigenvalues_DF}
\end{align}
The existence of $\lambda_0$ implies the existence of a center manifold. In the present case, it locally coincides with the set of fixed points $a=b=0$. This implies that there is no flow along the center manifold, so the asymptotics are fully described by the remaining two linear terms. The calculations are standard and presented in Appendix \ref{ch:Center_manifold}, along with a short review of the underlying theory. We only state the results here. First, regarding the unstable set $S_{-\infty}$, as defined in \eqref{eq_unstable_set}:

\begin{theorem}[Unstable set]
\label{Source_Theorem}
For $\iminf > 1$, the point $(a,\apr,i) = (0,0,\iminf)$ is an unstable fixed point of Dynamics \eqref{3D_Flow_Def}. Locally, there exists a smooth unstable manifold of dimension one. Its restriction to $\{a \geq 0\}$ is the unique trajectory that emerges from the fixed point such that $a(\ta), i(\ta) >0$ as $\ta \rightarrow - \infty$. It has the following properties:
\begin{equation}
\begin{aligned}
&\bullet \lim_{\ta \rightarrow -\infty} a(\ta) = 0, \\
&\bullet \lim_{\ta \rightarrow -\infty} i(\ta) = \iminf, \\
&\bullet b(\ta) >0, \text{ } b'(\ta) >0 , \text{ } i'(\ta) < 0 \quad \text{as } \ta \rightarrow - \infty.
\end{aligned}
\end{equation}
\begin{proof}
By choice of $\iminf>1$, the eigenvalue $\lambda_+$ is positive, whereas $\lambda_-$ is negative. Denote by $u,v,w$ the coordinates in the system of eigenvectors $e_0,e_+,e_-$ of the Jacobian at the fixed, where the fixed point is shifted to the origin. This transformation is done explicitly Lemma \ref{NORMAL_FORM}. By Theorem \ref{CM_thm_main}, the dynamics in an open neighborhood around the fixed point are equivalent to
\begin{align}
\begin{aligned}
u' &= 0, \\
v' &= \lambda_+ \, v, \\
w' &= \lambda_- \, w.
\end{aligned} \label{Dynamics_Normal}
\end{align}
Hence, there is a stable and an unstable manifold, each of dimension one. The eigenvector $e_+$ describes the asymptotic direction of the unstable manifold, in coordinates $a,b,i$ it is given by
\begin{align}
e_+ =
\begin{pmatrix}
-\lambda_-\\
\iminf-1\\
\frac{1}{c}(r + \iminf) \cdot \frac{\lambda_-}{\lambda_+} \\
\end  {pmatrix}. 
\end{align}
Since $\lambda_-<0$ and $\lambda_+ >0$, asymptotically along the branch of the unstable manifold in direction $e_+$, where $a>0$ and $b>0$, it also holds that $b'=\lambda_+ \, (\iminf -1) >0$ and that $ci' = \lambda_- \, (r+\iminf)  <0$.
\end{proof}
\end{theorem}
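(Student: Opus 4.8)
The plan is to read this off from center manifold theory, exploiting that the fixed points of \eqref{3D_Flow_Def} form a whole line. First I would substitute $(a,b,i)=(0,0,\iminf)$ into the Jacobian $D_{(a,b,i)}$ displayed above and compute its spectrum, obtaining $\lambda_0=0$ with eigenvector $e_0=(0,0,1)^{\mathsf T}$, together with $\lambda_\pm=-\tfrac{c}{2}\pm\sqrt{\tfrac{c^2}{4}+\iminf-1}$; for $\iminf>1$ the radicand exceeds $\tfrac{c^2}{4}$, so $\lambda_+>0>\lambda_-$ and the fixed point is non-hyperbolic only through the single zero eigenvalue. Next I would pass to coordinates $(u,v,w)$ along the eigenvectors $e_0,e_+,e_-$, with the fixed point translated to the origin --- the change of variables performed in Lemma \ref{NORMAL_FORM}.

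The one model-specific point is that the $u$-axis $\{a=b=0\}$ consists entirely of fixed points: it is therefore invariant, passes through the origin, and is tangent to $\ker D=\operatorname{span}(e_0)$, so it \emph{is} a local center manifold, on which the reduced dynamics is trivial, $u'=0$. With the center flow thus eliminated, the reduction result (Theorem \ref{CM_thm_main}) tells us the flow near the fixed point is conjugate to the decoupled linear system $u'=0$, $v'=\lambda_+v$, $w'=\lambda_-w$, from which I read off a smooth one-dimensional local unstable manifold, the $v$-axis, tangent at the fixed point to $e_+$. Solving $(D_{(0,0,\iminf)}-\lambda_+I)e_+=0$ gives $e_+=\bigl(-\lambda_-,\ \iminf-1,\ \tfrac{1}{c}(r+\iminf)\tfrac{\lambda_-}{\lambda_+}\bigr)^{\mathsf T}$, whose components have signs $(+,+,-)$.

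I would then pin down the branch and the claimed asymptotics. Since the $a$-component of $e_+$ is positive, exactly one of the two half-lines of the unstable manifold lies in $\{a>0\}$; parametrizing it by $v=v_0e^{\lambda_+z}$ with $v_0>0$ (so $v\to0$ as $z\to-\infty$) and expanding $(a,b,i-\iminf)$ to leading order in $v$ (leading term proportional to $v\,e_+$) yields $a(z),i(z)>0$ for all sufficiently negative $z$, $a(z)\to0$, $i(z)\to\iminf$, and then $b=a'\approx v_0e^{\lambda_+z}(\iminf-1)>0$, $b'\approx\lambda_+v_0e^{\lambda_+z}(\iminf-1)>0$, and $ci'=-a(a+i+r)\approx-v_0e^{\lambda_+z}(-\lambda_-)(r+\iminf)<0$; equivalently the signs of $b'$ and $ci'$ can be taken off the normal form directly as $\lambda_+v\,(\iminf-1)$ and $\lambda_-v\,(r+\iminf)$ modulo higher-order terms. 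For uniqueness, any non-constant orbit converging to $(0,0,\iminf)$ as $z\to-\infty$ with $a,i>0$ nearby must carry vanishing center and stable components in that limit, hence it lies on the unstable manifold, and $a>0$ forces the branch tangent to $+e_+$.

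The hard part is not really a hard part once center manifold theory is available: it reduces to verifying that the fixed-point line genuinely is a center manifold (immediate from the three conditions above) and carrying out the eigenbasis bookkeeping carefully enough that the signs in $e_+$ and in $b',i'$ emerge as stated, plus checking that the conjugacy of Theorem \ref{CM_thm_main} is regular enough to transfer smoothness of the unstable manifold and the leading-order rates back to the $(a,b,i)$ variables. I would relegate the explicit center manifold computation to Appendix \ref{ch:Center_manifold}, as the author does.
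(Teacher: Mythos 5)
Your proposal is correct and follows essentially the same route as the paper: identify the line of fixed points as the (trivial) center manifold, invoke the normal-form reduction of Lemma \ref{NORMAL_FORM} and Theorem \ref{CM_thm_main} to obtain the decoupled linear system, and read the one-dimensional unstable manifold, the eigenvector $e_+$, and the signs of $a,b,b',i'$ off the $\lambda_+$-direction. The extra details you supply (explicit verification that $\{a=b=0\}$ is a center manifold, the uniqueness argument for the positive branch) are consistent with, and slightly more explicit than, the paper's own proof.
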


Next, we prove Lyapunov-stability of the points in $S_{+\infty}$, defined in \eqref{eq_stable_set}. Figure \ref{I0_fixed_phase_space} shows how the phase lines converge to $(0,0)$ in the $(a,b)$-plane. For technical reasons, we require that $\lambda_+ \neq \lambda_-$. Later, we deal with this degenerate case via a continuity argument.

\begin{figure}[h]
\centering
\vspace{0.6cm}
  		\begin{minipage}[c]{0.32\textwidth}
  \begin{picture}(100,100)
	\put(0,0){\includegraphics[width=\textwidth]{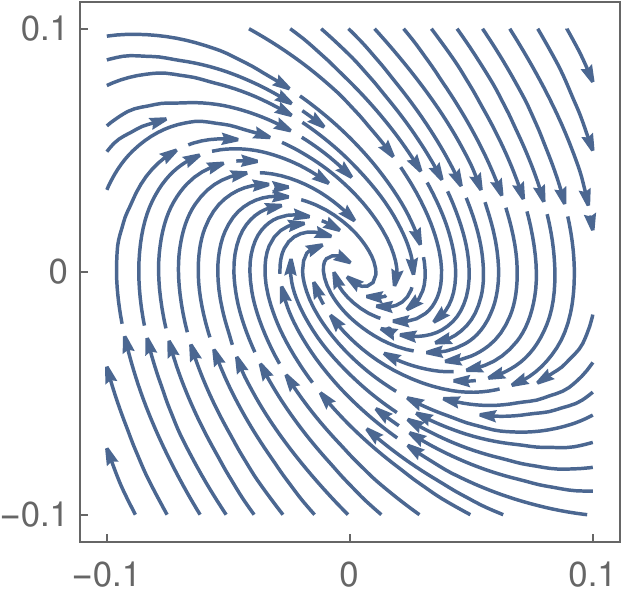}}
	\put(0,80){\scalebox{1.5}{${\color{mygray}b}$}}
	\put(80,-3){\scalebox{1.5}{${\color{mygray}a}$}}
	\put(35,115){$i \equiv 0, c=1$}
\end{picture}
 		\end{minipage}
  		\begin{minipage}[c]{0.32\textwidth}
  \begin{picture}(100,100)
	\put(0,0){\includegraphics[width=\textwidth]{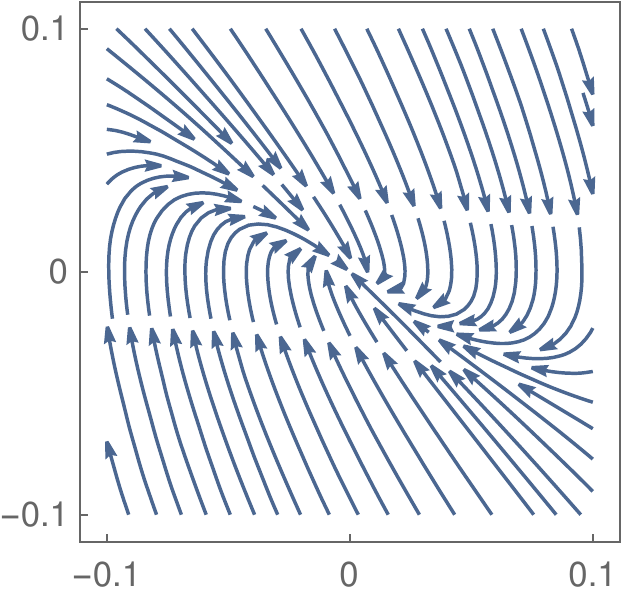}}
	\put(35,115){$i \equiv 0, c=2$}
\end{picture}
 		\end{minipage}
  		\begin{minipage}[c]{0.32\textwidth}
  \begin{picture}(100,100)
	\put(0,0){\includegraphics[width=\textwidth]{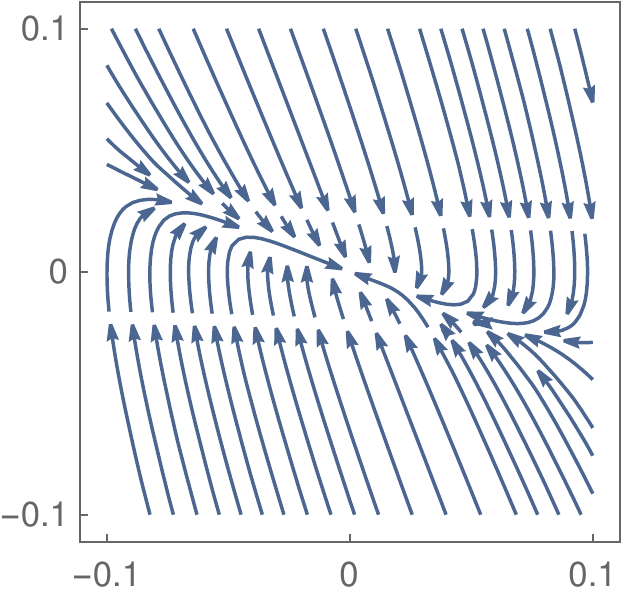}}
	\put(35,115){$i \equiv 0, c=3$}
\end{picture}
 		\end{minipage}
		\vspace{0.2cm}
 		\caption{Phase portrait of $(a,b)$ of the Wave Eq. \eqref{3D_Flow_Def} if we impose a fixed value of $i(\ta) = 0$, see also Section \ref{ch:Attractor}. The choices of $c$ change the type of convergence towards the origin: spiraling for $c=1$, one stable manifold with algebraic index $2$ for $c=2$, two stable manifolds for $c=3$.}\label{I0_fixed_phase_space}
\end{figure}

\begin{theorem}[Stable set]
\label{Sink_Theorem}
For all $c > 0$ and $\ipinf \in [\oc,1)$, such that $\ipinf > c^2/4 -1$, the point $(a,\apr,i)=(0,0,\ipinf) $ is Lyapunov stable under Dynamics \eqref{3D_Flow_Def}. In a small neighborhood, $(a,b) \rightarrow (0,0)$ exponentially fast.
\begin{proof}
By choices of $c$ and $\ipinf$, both non-zero eigenvalues \eqref{Eigenvalues_DF} of the Jacobian are real-valued and negative and it holds that $\lambda_+ \neq \lambda_-$. As before, denote by $u,v,w$ the coordinates in the system of eigenvectors $e_0, e_+, e_-$ of the Jacobian at the fixed point, see Lemma \ref{NORMAL_FORM}. By Theorem \ref{CM_thm_main}, the dynamics of the system in a neighborhood around the fixed point are equivalent to
\begin{align}
\begin{aligned}
u' &= 0, \\
v' &= \lambda_+ \, v, \\
w' &= \lambda_- \, w.
\end{aligned} \label{Dynamics_Normal2}
\end{align}
Take some small enough initial data $(\epsilon_u, \epsilon_v, \epsilon_w)$: in view of Eq. \eqref{Dynamics_Normal2}, $\epsilon_u$ does not vanish, but also does not propagate, whereas $v$ and $w$ converge to zero exponentially fast. Since $a$ and $b$ are represented in terms of $v$ and $w$, see \eqref{Eigenvalues_Normal}, they vanish exponentially fast.
\end{proof}
\end{theorem}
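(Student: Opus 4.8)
The plan is to reduce the statement to the already-established center-manifold machinery and then read off the asymptotic rates from the two nonzero eigenvalues. By hypothesis, $\frac{c^2}{4}+\ipinf-1>0$, so the discriminant in \eqref{Eigenvalues_DF} is strictly positive; hence $\lambda_\pm = -\frac{c}{2}\pm\sqrt{\frac{c^2}{4}+\ipinf-1}$ are real, distinct, and — crucially — since $\ipinf<1$ forces $\sqrt{\frac{c^2}{4}+\ipinf-1}<\frac{c}{2}$, both are strictly negative, with $\lambda_-<\lambda_+<0$. First I would invoke Lemma~\ref{NORMAL_FORM} to pass to the eigencoordinates $(u,v,w)$ centered at the fixed point $(0,0,\ipinf)$, and then Theorem~\ref{CM_thm_main} to assert that in a small neighborhood the flow is $C^1$-conjugate to the decoupled linear system \eqref{Dynamics_Normal2}: $u'=0$, $v'=\lambda_+ v$, $w'=\lambda_- w$. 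The point is that the center direction carries no dynamics (the center manifold is exactly the continuum of fixed points), so any nearby trajectory has $u\equiv\epsilon_u$ constant and $v(z)=\epsilon_v e^{\lambda_+ z}$, $w(z)=\epsilon_w e^{\lambda_- z}$.

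From here Lyapunov stability is immediate: given a target neighborhood of $(0,0,\ipinf)$, choose the initial perturbation $(\epsilon_u,\epsilon_v,\epsilon_w)$ small enough in the conjugating chart; then $v,w\to 0$ monotonically in modulus while $u$ stays fixed and small, so the whole trajectory remains in the neighborhood for all forward $z$. The exponential decay of $(a,b)$ then follows because, by the explicit change of coordinates recorded in \eqref{Eigenvalues_Normal}, the original variables $a$ and $b$ are linear combinations of $v$ and $w$ alone (the eigenvector $e_0=(0,0,1)$ has no $a$- or $b$-component), so $|a(z)|+|b(z)|\le C\bigl(|\epsilon_v|e^{\lambda_+ z}+|\epsilon_w|e^{\lambda_- z}\bigr)=O(e^{\lambda_+ z})$ as $z\to+\infty$, which is exactly the claimed rate $\mu_{+\infty}=-\frac{c}{2}+\sqrt{\frac{c^2}{4}+\ipinf-1}$.

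The one genuine subtlety, and the reason the theorem explicitly excludes the borderline case $\frac{c^2}{4}+\ipinf-1=0$, is that the clean conjugacy \eqref{Dynamics_Normal2} requires $\lambda_+\neq\lambda_-$; in the degenerate case one would instead get a Jordan block and the slower rate $z\,e^{-\frac{c}{2}z}$ alluded to in Theorem~\ref{Main_Theorem}. So the hardest part of the write-up is not a calculation but making sure the hypotheses of the center-manifold reduction are met: one needs that the nonlinear remainder is genuinely higher-order (so that the local conjugacy is legitimate and the exponential rate of the linear part is inherited by the full flow), and that the local center manifold can be taken to coincide with the line of fixed points — both of which are the content of the appendix results \ref{NORMAL_FORM} and \ref{CM_thm_main}, so here I would simply cite them. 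A secondary point worth a sentence is that "Lyapunov stable" is a statement about forward time for the full three-dimensional flow, and since the $u$-coordinate neither grows nor decays, stability (as opposed to asymptotic stability) is the correct and sharp conclusion: trajectories converge not to the single point $(0,0,\ipinf)$ but to a nearby point on the fixed-point continuum, which is precisely why the limits $\iminf,\ipinf$ of the traveling wave form a continuum rather than being pinned down by local stability alone.
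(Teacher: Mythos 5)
Your proposal is correct and follows essentially the same route as the paper: pass to eigencoordinates via Lemma \ref{NORMAL_FORM}, invoke Theorem \ref{CM_thm_main} to reduce to the decoupled system \eqref{Dynamics_Normal2} (the center manifold being the line of fixed points, so $u$ is constant while $v,w$ decay with rates $\lambda_\pm<0$), and conclude since $a,b$ are combinations of $v,w$ alone. Your added remarks on the excluded degenerate case $\lambda_+=\lambda_-$ and on why one only gets Lyapunov (not asymptotic) stability of the individual fixed point are accurate elaborations of what the paper leaves implicit.
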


\begin{proposition} \label{Nec_oscillation_convergence}
Let $c > 0$ and $\ipinf < \oc $. There is no non-negative and non-constant traveling wave that converges to $(a,b,i)=(0,0,\ipinf)$ as $\ta \rightarrow +\infty$.
\begin{proof}
As in the previous Theorem, the asymptotic behavior of Eq. \eqref{3D_Flow_Def} around the limiting fixed point $(a,b,i) = (0,0,\ipinf)$ is described by the linear System \eqref{Dynamics_Normal2}. But now, since $\ipinf < \oc = \max\{0, 1- c^2/4\}  $, either $\ipinf < 0$ or both eigenvalues $\lambda_{\pm}$ have a non-vanishing imaginary part and thus, $v$ and $w$ spiral. Since $a$ and $b$ are represented in terms of $v$ and $w$, see \eqref{Eigenvalues_Normal}, any trajectory that converges to $(0,0,\ipinf)$ can not stay non-negative in its $a$-component.
\end{proof}
\end{proposition}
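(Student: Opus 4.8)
The plan is to read the behaviour of any orbit converging to $(0,0,\ipinf)$ off the linearisation at that fixed point, exactly as in the proofs of Theorems \ref{Source_Theorem} and \ref{Sink_Theorem}. The hypothesis $\ipinf<\oc$ forces the two hyperbolic eigenvalues at $(0,0,\ipinf)$ to be a complex-conjugate pair, so the flow transverse to the centre manifold is a spiral sink; the $a$-coordinate of the orbit must then change sign infinitely often as $\ta\to+\infty$, which is incompatible with $a\ge 0$.

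First I would discard the degenerate alternative hidden in $\oc=\max\{0,1-c^2/4\}$. Since $i(\ta)\ge 0$ and $i(\ta)\to\ipinf$, we have $\ipinf\ge 0$, so if $c\ge 2$, i.e. $\oc=0$, the claim is vacuous. Hence we may assume $c<2$, so $\oc=1-c^2/4>0$, and the hypothesis becomes $\tfrac{c^2}{4}+\ipinf-1<0$. By \eqref{Eigenvalues_DF} the non-zero eigenvalues of $D_{(0,0,\ipinf)}$ are then a complex-conjugate pair $\lambda_\pm$ with real part $-c/2<0$ and imaginary part $\pm\omega$, $\omega:=\sqrt{1-\tfrac{c^2}{4}-\ipinf}>0$.

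Next I would invoke the centre-manifold reduction of Lemma \ref{NORMAL_FORM} and Theorem \ref{CM_thm_main}: the centre manifold at $(0,0,\ipinf)$ is the line of fixed points $\{a=b=0\}$ (the $\lambda_0$-eigenvector being $(0,0,1)$), and in the eigencoordinates $(u,v,w)$ the flow near the fixed point is governed by System \eqref{Dynamics_Normal2}, i.e. $u'=0$ together with a two-dimensional hyperbolic block carrying $\lambda_\pm$. A non-constant traveling wave converging to $(0,0,\ipinf)$ eventually enters this neighbourhood, where $u$ is constant and must equal its limit $0$, while the $(v,w)$-part solves a planar linear system with the complex eigenvalues $\lambda_\pm$. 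If $(v,w)\equiv 0$ on that interval, then $a\equiv b\equiv 0$ there, hence $a\equiv 0$ on all of $\mathds{R}$ by uniqueness of solutions of \eqref{3D_Flow_Def}, so the wave is the constant $(0,0,\ipinf)$, which is excluded. Otherwise $(v(\ta),w(\ta))$ is a non-zero orbit of a linear spiral sink: in real coordinates on the $(v,w)$-plane it equals $e^{-c\ta/2}$ times a rigid rotation by angle $\omega\ta$ of a fixed non-zero vector, and therefore spirals into the origin with unbounded winding.

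Finally I would check that $a(\ta)$ itself genuinely oscillates. Because the centre eigenvector is $(0,0,1)$, the $a$-coordinate depends only on $v$ and $w$, say $a=\alpha v+\beta w$ via \eqref{Eigenvalues_Normal}; and from the shape of $D_{(0,0,\ipinf)}$, whose first row is $(0,1,0)$, every eigenvector belonging to a non-zero eigenvalue has non-zero first component, so $(\alpha,\beta)\neq(0,0)$ and $a$ does not vanish identically on the hyperbolic plane. Hence $a(\ta)=C\,e^{-c\ta/2}\cos(\omega\ta-\varphi)$ for some $C>0$, which is strictly negative on a sequence of intervals accumulating at $+\infty$, contradicting $a\ge 0$. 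I expect the only genuinely delicate point to be confirming that a converging orbit is confined to the hyperbolic directions near the fixed point, so that the planar-spiral picture is exactly what describes $(a,b)$ asymptotically; but this is precisely what the normal form of Theorem \ref{CM_thm_main} provides, and the non-degeneracy $(\alpha,\beta)\neq(0,0)$ is then an elementary consequence of the explicit Jacobian.
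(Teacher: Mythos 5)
Your proposal is correct and follows essentially the same route as the paper: reduce to the normal form of Theorem \ref{CM_thm_main}, observe that $\ipinf<\oc$ forces the hyperbolic eigenvalues $\lambda_\pm$ to be a complex-conjugate pair (or $\ipinf<0$, which is impossible/vacuous for a non-negative wave), and conclude that the resulting spiral in $(v,w)$ forces $a$ to change sign, contradicting non-negativity. You merely spell out details the paper leaves implicit, such as the non-degeneracy of the $a$-component on the hyperbolic plane and the exclusion of the constant solution.
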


\section{Attractor of a lower-dimensional sub-system} \label{ch:Attractor}

\subsection{Construction and result}

We begin our search for a non-negative attractor of $S_{+\infty}$ in an easier setting: we fix $i(\ta)=i=const.$ and investigate the two-dimensional sub-system in the remaining coordinates. To separate it from the full system, we write it as $\bar{a}(\ta),\bar{\apr}(\ta)$. For this system, we prove the existence of a suitable attractor. This set will be denoted as $T_c(i)$, to emphasize that it depends on the chosen value of $i$, which will be constant only in this section. The flow of the sub-system and the region $T_c(i)$ are drawn in Figure \ref{fig:inv_triangles}.

\begin{definition}[Two-dimensional sub-system] \label{def:2dsyst}
For $c > 0$ and $i \in [ \oc ,1)$, denote by $\bar{a}(\ta),\bar{\apr}(\ta)$ the two-dimensional flow defined by
\begin{align}
\begin{aligned}
\bar{a}' &= \bar{b}, \\
\bar{b}' &= \bar{a}(\bar{a}+i-1) -c\bar{\apr},
\end{aligned} \label{2D_System_Def}
\end{align}
which results from the Wave System \eqref{3D_Flow_Def} by fixing $i(z) = i $.
\end{definition}

There are only two fixed points of \eqref{2D_System_Def}, $(\bar{a},\bar{\apr})=(0,0)$ and $(\bar{a},\bar{\apr})=(1-i,0)$. We denote the eigenvalues and eigenvectors of the Jacobian at $(0,0)$ as
\begin{align}
\lambda_{\pm}(i) & := - \frac{c}{2} \pm \sqrt{\frac{c^2}{4} + i-1},
\qquad  l_\pm(i) : = \begin{pmatrix}
\lambda_\mp \\ 1-i
\end{pmatrix}. \label{Eigen_2d_00}
\end{align}
It holds that $\lambda_-(i) \leq \lambda_+(i) <0$, we see that $(0,0)$ is a stable fixed point. Moreover, for $i > \oc$, it holds that $\lambda_-(i) \neq \lambda_+(i)$. Note that $\lambda_\pm$ are identical to the eigenvalues of the full system around the fixed point $(0,0,i)$, see \eqref{Eigenvalues_DF}. The eigenvectors $l_{\pm}$ are the projections of the corresponding three-dimensional eigenvectors into the $(a,b)$-plane.

The Jacobian at $(1-i,0)$ has eigenvalues and eigenvectors
\begin{align}
\beta_{\pm}(i) & := - \frac{c}{2} \pm \sqrt{\frac{c^2}{4} + 1-i}, \qquad
r_\pm(i) := \begin{pmatrix}
-\beta_\mp \\ 1-i
\end{pmatrix}, \label{Eigen_2d_1-i}
\end{align}
and it holds that $\beta_-(i) < 0 < \beta_+(i)$. These have no direct correspondence to the three-dimensional system.

We now define the region $T_c(i)$. It is a triangle, spanned by the two fixed points $(0,0)$ and $(1-i,0)$ and two adjacent eigenvectors:

\begin{figure}[h]
\vspace{0.7cm}
 		\centering
 		\begin{minipage}[c]{0.32\textwidth}
\begin{picture}(100,100)
	\put(0,0){
 	\begin{tikzpicture}
    \node[anchor=south west,inner sep=0] (image) at (0,0) {\includegraphics[width=\textwidth]{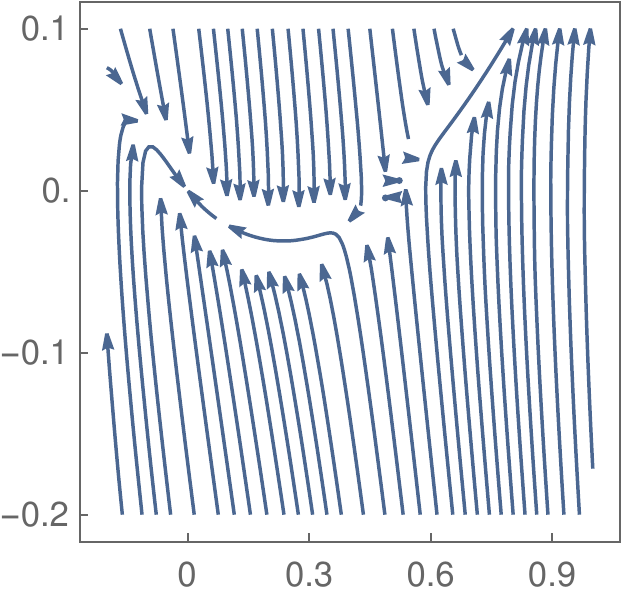}};
    \coordinate (a) at (1.2,2.68);
    \coordinate (b) at (2.55,2.68);
    \coordinate (c) at (1.8, 1.8);
  \draw[line width=0.6 mm,color=RedOrange] (a)--(b)--(c)--(a)--(b);
\pic[draw=black, line width=1.2 pt, angle eccentricity=1.5, angle radius=0.4cm]{angle=a--b--c}; 
\pic[draw=black, line width=1.2 pt, angle eccentricity=1.5, angle radius=0.4cm]{angle=c--a--b};
\filldraw [fill=white, draw=black] (0.75,1.85) rectangle (1.3,2.4);
\filldraw [fill=white, draw=black] (2.45,1.85) rectangle (3.0,2.4);
\end{tikzpicture} }
	\put(3,55){\scalebox{1.5}{${\color{mygray}\bar{b}}$}}
	\put(67,-4){\scalebox{1.5}{${\color{mygray}\bar{a}}$}}
	\put(30,115){$i=0.5, c=2$}
	\put(74,58){\scalebox{1.5}{$\gamma_r$}}
	\put(26,58){\scalebox{1.5}{$\gamma_l$}}
\end{picture} 		
 		\end{minipage}
 		\begin{minipage}[c]{0.32\textwidth}
\begin{picture}(100,100)
	\put(0,0){ 
 	\begin{tikzpicture}
    \node[anchor=south west,inner sep=0] (image) at (0,0) {\includegraphics[width=\textwidth]{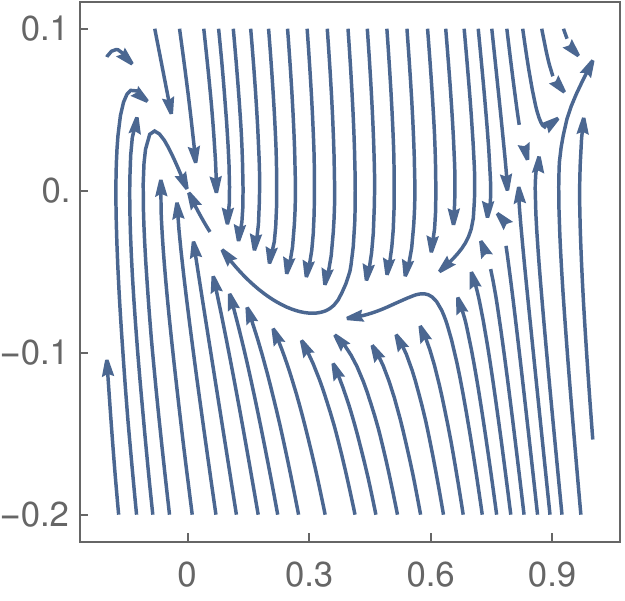}};
\coordinate (a) at (2.0,0.9);
    \coordinate (b) at (1.2, 2.68);
    \coordinate (c) at (3.3,2.68);
  \draw[line width=0.6 mm,color=RedOrange] (a)--(b)--(c)--(a)--(b);
\pic[draw=black, line width=1.2 pt, angle eccentricity=1.5, angle radius=0.4cm]{angle=a--b--c}; 
\pic[draw=black, line width=1.2 pt, angle eccentricity=1.5, angle radius=0.4cm]{angle=b--c--a};
\filldraw [fill=white, draw=black] (0.75,1.7) rectangle (1.3,2.25);
\filldraw [fill=white, draw=black] (3.05,1.7) rectangle (3.6,2.25);
\end{tikzpicture} }
	\put(30,115){$i=0.2, c=2$}
	\put(91,54){\scalebox{1.5}{$\gamma_r$}}
	\put(26,54){\scalebox{1.5}{$\gamma_l$}}
\end{picture} 		
 		\end{minipage}
 		\begin{minipage}[c]{0.32\textwidth}
\begin{picture}(100,100)
	\put(0,-1){\includegraphics[width=1.01\textwidth]{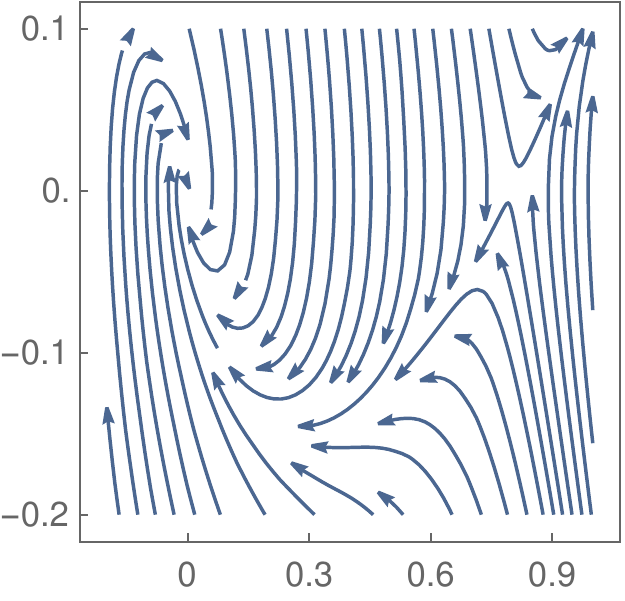}}
	\put(30,115){$i=0.2, c=1$}
\end{picture} 		
 		\end{minipage}
 		\vspace{0.2cm}
 		\caption{The phase plot of $(\bar{a},\bar{\apr})$ following Eq. \eqref{2D_System_Def}, displayed for several values of $i$ and $c$. The only two fixed points are $(0,0)$ and $(1-i,0)$. For $i \geq \oc$, the orange triangles $T_c(i)$ are invariant regions of Dynamics \eqref{2D_System_Def}, see Prop. \ref{single_inv_triangle}. They increase in $-i$: the point $(1-i,0)$ moves to the right and the two internal angles $\gamma_l(i)$ and $\gamma_r(i)$ increase. In the third case, $i < \oc$ implies that the system spirals around $(0,0)$ while converging.} 
 		\label{fig:inv_triangles}
\end{figure}

\begin{definition}[The triangle $T_c(i)$] \label{def:triangle}
For $c > 0$ and $i \in [ \oc ,1)$, let $T_c(i)$ be the convex hull of the three points $(0,0), (1-i,0)$ and $ C(i)$. Here, the point $C(i)$ is the unique intersection of the two half-lines
\begin{align}
\begin{aligned}
\Big \{\begin{pmatrix}
0 \\0 \end{pmatrix} - p \cdot l_+(i) \, \Big | \, p  \geq 0 \Big \} \quad \text{and} \quad  \Big \{ \begin{pmatrix}
1-i \\0 \end{pmatrix} - q \cdot r_+(i) \, \Big | \, q \geq 0 \Big \},
\end{aligned}
\end{align}
with $l_+(i)$ and $r_+(i)$ defined in \eqref{Eigen_2d_00} and \eqref{Eigen_2d_1-i}. We denote the internal angles of $T_c(i)$ at $(0,0)$ and $(1-i,0)$ as $\gamma_l(i)$ and $\gamma_r(i)$, respectively.
\end{definition}

Visually, it can easily be seen in Figure \ref{fig:inv_triangles} that the set $T_c(i)$ is invariant under Dynamics \eqref{2D_System_Def}: the flow at the boundary of $T_c(i)$ points inwards. The detailed computations are presented in Section \ref{ch:subsystem_computations}.

\begin{proposition}[Invariant region of the reduced system] \label{single_inv_triangle}
The set $T_c(i)$ is an invariant region of Dynamics \eqref{2D_System_Def}. If $(\bar{a}_0, \bar{b}_0) \in T_c(i)$, then
\begin{align}
\bar{a}(\ta), \bar{b}(\ta) \in T_c(i) \qquad  \forall \ta \geq 0.
\end{align}
It holds that $\bar{a} \geq 0$ and $\bar{b} \leq 0$ within $T_c(i)$. Hence, if $(\bar{a}_0, \bar{b}_0) \neq (1-i,0)$, then $\bar{a}(z)$ converges to $0$ monotonically as $z \rightarrow + \infty$.
\end{proposition}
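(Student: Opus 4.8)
The plan is to establish forward-invariance of $T_c(i)$ by checking that the vector field $F(\bar a,\bar b)=(\bar b,\ \bar a(\bar a+i-1)-c\bar b)$ points (weakly) inward along each of the three edges of the triangle, and then to deduce the sign statements and the monotone convergence of $\bar a$ from the geometry of $T_c(i)$ together with a short planar dynamics argument. I would start with the edge lying on the $\bar a$-axis, the segment $\{(\bar a,0):0\le\bar a\le 1-i\}$: there $\bar a'=\bar b=0$ and $\bar b'=\bar a(\bar a+i-1)$, which is strictly negative for $0<\bar a<1-i$ and vanishes only at the two fixed points $(0,0)$ and $(1-i,0)$, so on this edge the flow pushes straight down into $T_c(i)$ (and rests at the two corners).

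For the two slanted edges the key observation is that $\gamma_l$ emanates from $(0,0)$ in the eigendirection $-l_+(i)=(-\lambda_-,-(1-i))$ and $\gamma_r$ emanates from $(1-i,0)$ in the eigendirection $-r_+(i)=(\beta_-,-(1-i))$ of the respective Jacobians, so along each edge the linear part of $F$ is tangent to the edge and contributes nothing to the inward-normal component, leaving only the quadratic term $\bar a^2$. Concretely, parametrising $\gamma_l$ by $(-t\lambda_-,\,-t(1-i))$, $t\ge 0$, with inward normal $n_l=(1-i,-\lambda_-)$, one finds after substituting the characteristic relation $\lambda_-^2+c\lambda_-+(1-i)=0$ that all $O(t)$ terms cancel and $n_l\cdot F=(-\lambda_-)^3 t^2\ge 0$; the identical computation on $\gamma_r$, using $\beta_-^2+c\beta_--(1-i)=0$, gives $n_r\cdot F=(-\beta_-)^3 s^2\ge 0$. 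Since both are strict away from the corners, $F$ is inward on $\partial T_c(i)$, which yields invariance for all $z\ge 0$; the vertex $C(i)$ is a regular point where both inequalities are strict, so the corner there is harmless as well. I expect this to be the only genuine work: besides the two short cancellations one must confirm that the half-lines really meet in a nondegenerate triangle — they do, at parameter $t=s=(1-i)/(|\lambda_-|+|\beta_-|)>0$, so $C(i)$ has $\bar a$-coordinate in $(0,1-i)$ and negative $\bar b$-coordinate — and that everything survives the degenerate case $i=\oc$ with $\lambda_+=\lambda_-$ (the characteristic relation is unaffected, as it only needs $\lambda_-$ to be a root).

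The remaining assertions are soft. All three vertices of $T_c(i)$ have $\bar a\ge 0$ and $\bar b\le 0$, so by convexity $\bar a\ge 0$ and $\bar b\le 0$ throughout. For an orbit starting in $T_c(i)$, invariance keeps it in the compact set $T_c(i)$ for all $z\ge 0$, hence it is global and $\bar a'=\bar b\le 0$: $\bar a(z)$ is non-increasing and bounded below, so it converges monotonically to some $\bar a_\infty\ge 0$. Since $\bar a'=\bar b$ is integrable on $[0,\infty)$ (with $\int_0^\infty|\bar a'|=\bar a_0-\bar a_\infty$) and uniformly continuous (its derivative $\bar b'$ is bounded on $T_c(i)$), Barbalat's lemma gives $\bar b(z)\to 0$, so the orbit converges to a point $(\bar a_\infty,0)$, which must be a fixed point; thus $\bar a_\infty\in\{0,1-i\}$. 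Finally I would exclude $\bar a_\infty=1-i$ unless the orbit starts at $(1-i,0)$: a nonconstant orbit converging to the saddle $(1-i,0)$ must lie on its one-dimensional stable manifold, tangent there to $r_-(i)=(-\beta_+,1-i)$; but since $\beta_+>0$ both rays of that tangent line leave the local wedge of $T_c(i)$ at $(1-i,0)$ — that wedge is spanned by $(-1,0)$ and $-r_+(i)$, hence consists only of directions with both components nonpositive — so the stable manifold meets $T_c(i)$ only at the vertex itself. Therefore $\bar a_\infty=0$ whenever $(\bar a_0,\bar b_0)\neq(1-i,0)$, i.e.\ $\bar a(z)\downarrow 0$, which completes the proof. (Equivalently one may invoke Poincaré–Bendixson: $\bar b\le 0$ on $T_c(i)$ forbids periodic orbits, so every $\omega$-limit set in $T_c(i)$ is one of the two fixed points, and the stable-manifold geometry rules out $(1-i,0)$.)
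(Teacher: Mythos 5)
Your proposal is correct and follows essentially the same route as the paper: the same edge-by-edge inward-pointing check with the same normals, where the linear terms cancel via the characteristic relations $\lambda_-^2+c\lambda_-+(1-i)=0$ and $\beta_-^2+c\beta_--(1-i)=0$, leaving the positive quadratic contributions $-t^2\lambda_-^3$ and $-s^2\beta_-^3$. The only difference is that you spell out the final convergence claim (Barbalat plus exclusion of the saddle $(1-i,0)$ via its stable manifold — or, even more simply, via the fact that every point of $T_c(i)$ other than that vertex has $\bar a<1-i$ and $\bar a$ is non-increasing), which the paper leaves implicit.
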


For any non-negative solution of the full Wave System \eqref{3D_Flow_Def}, it holds that $ i' \leq 0$. Thus, we are interested in how $T_c(i)$ changes when $i$ decreases:

\begin{proposition}[Nested invariant regions] \label{Nested_Triangles}
For a fixed $c>0$, the set $T_c(i)$ is increasing in $-i,i \in [\oc,1)$. Thus, $T_c(i) \subseteq T_c(\oc)$ for all $i \in [\oc,1)$.
\end{proposition}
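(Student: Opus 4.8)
The plan is to replace the geometric definition of $T_c(i)$ by an equivalent description as an intersection of three half-planes whose boundary slopes depend monotonically on $i$, and then read off the inclusion directly. First I would identify the supporting lines of the three sides of $T_c(i)$. The side joining $(0,0)$ and $(1-i,0)$ lies on $\{\bar b=0\}$, and since the third vertex $C(i)$ has strictly negative second coordinate (it lies on the ray from the origin in direction $-l_+(i)$, and $l_+(i)=(\lambda_-(i),1-i)$ with $\lambda_-(i)<0<1-i$ for $i\in[\oc,1)$), the triangle lies in $\{\bar b\le 0\}$. The side from $(0,0)$ to $C(i)$ lies on the line through the origin whose slope is the second-over-first ratio of $-l_+(i)$, i.e.\ $(1-i)/\lambda_-(i)$; since $\lambda_\pm(i)$ are the roots of $\lambda^2+c\lambda+(1-i)=0$ we have $\lambda_+(i)\lambda_-(i)=1-i$, so this slope equals $\lambda_+(i)$, and the opposite vertex $(1-i,0)$ lies on the side $\bar b\ge\lambda_+(i)\,\bar a$. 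Likewise the Jacobian of \eqref{2D_System_Def} at $(1-i,0)$ has characteristic polynomial $\beta^2+c\beta-(1-i)=0$, so $\beta_+(i)\beta_-(i)=-(1-i)$, and the side from $(1-i,0)$ to $C(i)$ (direction $-r_+(i)$ with $r_+(i)=(-\beta_-(i),1-i)$) lies on the line through $(1-i,0)$ of slope $-(1-i)/\beta_-(i)=\beta_+(i)$, with the opposite vertex $(0,0)$ on the side $\bar b\ge\beta_+(i)(\bar a-(1-i))$. Since $T_c(i)$ is a nondegenerate triangle, it equals the intersection of these three half-planes:
\[
T_c(i)=\Big\{(\bar a,\bar b)\ :\ \bar b\le 0,\ \ \bar b\ge\lambda_+(i)\,\bar a,\ \ \bar b\ge\beta_+(i)\big(\bar a-(1-i)\big)\Big\}.
\]
(In passing, this also gives $\tan\gamma_l(i)=-\lambda_+(i)$ and $\tan\gamma_r(i)=\beta_+(i)$, the precise form of the angle-monotonicity noted after Definition \ref{def:triangle}.)

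Next I would record the monotonicity in $i\in[\oc,1)$: the quantity $1-i$ is decreasing; $\lambda_+(i)=-\tfrac{c}{2}+\sqrt{\tfrac{c^2}{4}+i-1}$ is non-positive and increasing; $\beta_+(i)=-\tfrac{c}{2}+\sqrt{\tfrac{c^2}{4}+1-i}$ is positive and decreasing. Fix $\oc\le i_1\le i_2<1$ and take $(\bar a,\bar b)\in T_c(i_2)$. Combining $\bar b\le 0$ with $\bar b\ge\lambda_+(i_2)\,\bar a$ and $\lambda_+(i_2)<0$ forces $\bar a\ge 0$; combining $\bar b\le 0$ with $\bar b\ge\beta_+(i_2)(\bar a-(1-i_2))$ and $\beta_+(i_2)>0$ forces $\bar a\le 1-i_2$. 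Now check the three inequalities defining $T_c(i_1)$: the first is unchanged; the second holds since $\lambda_+(i_1)\,\bar a\le\lambda_+(i_2)\,\bar a\le\bar b$ (using $\bar a\ge 0$); and the third holds since, using $0\le\bar a\le 1-i_2\le 1-i_1$ and $\beta_+(i_1)\ge\beta_+(i_2)>0$,
\[
\beta_+(i_1)\big(\bar a-(1-i_1)\big)=-\beta_+(i_1)\big((1-i_1)-\bar a\big)\le-\beta_+(i_2)\big((1-i_2)-\bar a\big)=\beta_+(i_2)\big(\bar a-(1-i_2)\big)\le\bar b.
\]
Hence $(\bar a,\bar b)\in T_c(i_1)$, so $T_c(i_2)\subseteq T_c(i_1)$; taking $i_1=\oc$ gives the claim. (Equivalently, since $T_c(i_1)$ is convex it suffices to check that its vertices $(0,0)$, $(1-i_2,0)$, $C(i_2)$ lie in $T_c(i_1)$, the first two being immediate and the third following from the same estimate applied to a single point.)

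The only real content is the half-plane identity; everything after it is sign bookkeeping. The one point to be careful about is the third inequality, and the boundary case $i=\oc>0$, where $\lambda_+(i)=\lambda_-(i)=-c/2$ and the eigendirections $l_\pm(i)$ coincide — but $C(i)$, the three vertices, and the half-plane description all remain well defined, so the argument goes through unchanged. I do not anticipate a genuine obstacle.
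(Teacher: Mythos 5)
Your proof is correct. It rests on the same underlying monotonicity as the paper's --- the base vertex $(1-i,0)$ moves right and the two slanted sides open up as $i$ decreases --- but the execution is genuinely different in two ways. First, where the paper differentiates the component ratios $|\lambda_-(i)|/(1-i)$ and $(1-i)/|\beta_-(i)|$ explicitly (Lemmas \ref{lem:apex_l} and \ref{lem:apex_r}) to show the internal angles $\gamma_l,\gamma_r$ grow, you use the Vieta relations $\lambda_+\lambda_-=1-i$ and $\beta_+\beta_-=-(1-i)$ to identify the slopes of the two slanted edges as $\lambda_+(i)$ and $\beta_+(i)$, whose monotonicity in $i$ is immediate from the explicit square-root formulas; this is cleaner and replaces the derivative computations. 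Second, and more substantively, the paper's concluding step (``the base point moves right and the angles increase, and so does $T_c(i)$'') is left as a visual argument, whereas your description of $T_c(i)$ as the intersection of the three half-planes $\{\bar b\le 0\}$, $\{\bar b\ge\lambda_+(i)\,\bar a\}$, $\{\bar b\ge\beta_+(i)(\bar a-(1-i))\}$ reduces the inclusion to three one-line inequalities; in particular your third inequality handles the only genuinely non-obvious point, namely that the right-hand constraint relaxes only because the apex translation and the slope change cooperate (each alone would not suffice without the sign information $0\le\bar a\le 1-i_2$). Your remark on the boundary case $i=\oc>0$, where $\lambda_+=\lambda_-$ and the eigendirections coalesce but $C(i)$ and the half-plane description remain well defined, is also apt. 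In short: same geometric idea, but your version supplies the rigor that the paper's final sentence leaves implicit, at the cost of a slightly longer write-up.
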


This Proposition is proven by an easy geometric argument, again take a look at Figure \ref{fig:inv_triangles}: when $i$ decreases, the point $(1-i,0)$ moves to the right and the two internal angles $\gamma_l(i)$ and $\gamma_r(i)$ increase. Any reader who is not interested in the computational details may proceed with Section \ref{ch:full_system_attractor}, where we investigate the full system.

\subsection{Invariance and monotonicity of $\boldsymbol{T_c(i)}$}
\label{ch:subsystem_computations}
We analyze the $(\bar{a},\bar{\apr})$-system and the set $T_c(i)$ in detail. We prove that the Flow \eqref{2D_System_Def} at the boundary of $T_c(i)$ points inwards, and that the sets $T_c(i)$ are increasing in $-i$. For not getting lost in the following fiddly computations, always keep Figure \ref{fig:inv_triangles} in mind. We first examine the eigenvector $l_+(i)$ at the fixed point $(\bar{a}, \bar{b}) = (0,0)$:

\begin{lemma} \label{lem:apex_l}
Let $1>i > \oc$, and let $l_+(i) =( \lambda_-(i), 1-i)$ be defined as in \eqref{Eigen_2d_00}. The quotient of the absolute values of the $\bar{\apr}$-component and $\bar{a}$-component of $l_+(i)$ is increasing in $-i$.
\begin{proof}
The claim is equivalent to
\begin{align}
\frac{d}{di} \, \frac{ | \lambda_-(i) | } { 1-i}  > 0.
\end{align}
Recall that $\alpha_ - (i) = -c/2 - \sqrt{c^2/4 + i -1}<0$. A computation reveals that
\begin{align}
\begin{aligned}
\frac{d}{d i } \, \frac{ | \lambda_-( i ) | } { 1- i } &= - \frac{d}{d i } \, \frac{ \lambda_- ( i ) }{1- i }  \\ 
&= \frac{ \frac{1-i}{2 \sqrt{c^2/4 + i-1}} + \frac{c}{2} + \sqrt{c^2/4 + i-1} }{(1-i)^2}  \\
&= \frac{ 
1-i + c \sqrt{c^2/4 + i-1} +2(\frac{c^2}{4} + i-1)}{2 (1-i)^2 \sqrt{c^2/4 + i-1}} > 0,
\end{aligned}
\end{align}
the last inequality holds since $\oc < i $ implies $i > 1-c^2/4$.
\end{proof}
\end{lemma}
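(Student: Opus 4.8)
The plan is to convert the monotonicity claim into the sign of a single derivative. Since $l_+(i)=(\lambda_-(i),\,1-i)$ with $1-i>0$ on the relevant range, the quotient of absolute values of the $\bar{\apr}$-component and $\bar a$-component of $l_+(i)$ is $\frac{1-i}{|\lambda_-(i)|}$, and this being increasing in $-i$ is the same as saying it is decreasing in $i$, equivalently that $g(i):=\frac{|\lambda_-(i)|}{1-i}$ satisfies $g'(i)>0$ for all $i\in(\oc,1)$. So the whole lemma reduces to checking one inequality.

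First I would write out the eigenvalue explicitly: by \eqref{Eigen_2d_00}, $\lambda_-(i)=-\tfrac c2-\sqrt{\tfrac{c^2}{4}+i-1}$, and the hypothesis $i>\oc=\max\{0,1-\tfrac{c^2}{4}\}$ forces $\tfrac{c^2}{4}+i-1>0$. Hence on $(\oc,1)$ the square root is a smooth positive function, $\lambda_-(i)<0$ so $|\lambda_-(i)|=\tfrac c2+\sqrt{\tfrac{c^2}{4}+i-1}$, and both numerator and denominator of $g$ are smooth and bounded away from $0$; in particular $g$ is differentiable there.

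Next I would differentiate $g$ by the quotient rule. Using $\frac{d}{di}\sqrt{\tfrac{c^2}{4}+i-1}=\frac{1}{2\sqrt{\tfrac{c^2}{4}+i-1}}$, the numerator of $g'(i)$ is $\frac{1-i}{2\sqrt{\tfrac{c^2}{4}+i-1}}+\tfrac c2+\sqrt{\tfrac{c^2}{4}+i-1}$; multiplying through by $2\sqrt{\tfrac{c^2}{4}+i-1}$ rewrites this as $(1-i)+c\sqrt{\tfrac{c^2}{4}+i-1}+2\big(\tfrac{c^2}{4}+i-1\big)$. The only real point is now to observe that each of these three summands is strictly positive: $1-i>0$ since $i<1$; $c\sqrt{\tfrac{c^2}{4}+i-1}>0$; and $2(\tfrac{c^2}{4}+i-1)>0$ precisely because $i>\oc\ge 1-\tfrac{c^2}{4}$. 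Therefore $g'(i)>0$, which is the claim.

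No step is genuinely hard here — it is a short computation whose entire content is matching the sign of each term against the hypothesis $i>\oc$. The one mild subtlety is the behaviour at the endpoint $i=\oc$ in the case $\oc=1-\tfrac{c^2}{4}$: there the square root vanishes, $g'$ blows up, and the statement is therefore phrased on the open condition $i>\oc$. This causes no difficulty for the present lemma, and the borderline value will be dealt with separately by the continuity argument announced in the paper.
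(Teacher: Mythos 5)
Your proof is correct and follows essentially the same route as the paper's: reduce the monotonicity claim to the sign of $\frac{d}{di}\,\frac{|\lambda_-(i)|}{1-i}$, apply the quotient rule, and observe that each of the three summands $1-i$, $c\sqrt{c^2/4+i-1}$, and $2(c^2/4+i-1)$ in the resulting numerator is positive under the hypothesis $i>\oc$. Your remark about the endpoint $i=\oc$ is a sensible addition but not needed for the statement as phrased.
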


Now, since $\gamma_l(i)$ is the angle between the two vectors $(0,1)$ and $-l_+(i)$, the previous Lemma directly implies
\begin{corollary}
\label{cor:incr_cone_1}
Let $c > 0$ and $\oc \leq i_1 < i_2 <1 $. It holds that $\gamma_l(i_1) > \gamma_l(i_2)$, the angle $\gamma_l(i)$ is increasing in $-i$.
\end{corollary}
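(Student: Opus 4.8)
The plan is to reduce the corollary to the monotonicity of a single scalar ratio, which is exactly the content of Lemma \ref{lem:apex_l}. First I would read $\gamma_l(i)$ off the geometry of $T_c(i)$: by Definition \ref{def:triangle} the two edges of the triangle meeting at the vertex $(0,0)$ point in the directions $(1,0)$ (towards $(1-i,0)$) and $-l_+(i)=\bigl(|\lambda_-(i)|,\,-(1-i)\bigr)$ (towards $C(i)$). Since $|\lambda_-(i)|>0$ and $1-i>0$ for every $i\in[\oc,1)$, this second direction lies strictly in the fourth quadrant, so the internal angle at $(0,0)$ is the genuine arctangent
\begin{align}
\gamma_l(i)=\arctan\!\left(\frac{1-i}{|\lambda_-(i)|}\right)\ \in\ \Bigl(0,\tfrac{\pi}{2}\Bigr). \nonumber
\end{align}

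Next I would apply Lemma \ref{lem:apex_l}: it states precisely that $i\mapsto |\lambda_-(i)|/(1-i)$ is strictly increasing on $(\oc,1)$, equivalently that $\tan\gamma_l(i)=(1-i)/|\lambda_-(i)|$ is strictly increasing in $-i$. Because $\arctan$ is strictly increasing and $\gamma_l(i)\in(0,\pi/2)$, this monotonicity transfers verbatim to $\gamma_l$: the angle is strictly increasing in $-i$ on $(\oc,1)$, i.e.\ strictly decreasing in $i$ there. This already yields $\gamma_l(i_1)>\gamma_l(i_2)$ for $\oc<i_1<i_2<1$.

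The one point needing a remark is the closed left endpoint $i_1=\oc$, where the hypothesis $i>\oc$ of Lemma \ref{lem:apex_l} is not available (for instance at $c=2$, $i=\oc=0$, where $\lambda_+=\lambda_-$). Here I would simply invoke continuity: $i\mapsto\lambda_-(i)=-\tfrac c2-\sqrt{c^2/4+i-1}$ extends continuously to $[\oc,1)$, hence so do $|\lambda_-(i)|/(1-i)$ and $\gamma_l(i)$, and a continuous function that is strictly monotone on an open interval remains strictly monotone up to an included endpoint. Thus $\gamma_l$ is strictly decreasing in $i$ on all of $[\oc,1)$, which is the claim. There is no genuinely hard step: the analytic work is entirely inside Lemma \ref{lem:apex_l}, and the only thing to be careful about is the geometric identification of $\gamma_l(i)$ with $\arctan\bigl((1-i)/|\lambda_-(i)|\bigr)$ — in particular that $-l_+(i)$ lies in the open fourth quadrant, so that this angle lies in $(0,\pi/2)$ where $\arctan$ is order-preserving — together with the harmless endpoint continuity.
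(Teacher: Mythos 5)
Your proof is correct and follows essentially the same route as the paper: both reduce the monotonicity of $\gamma_l$ to Lemma \ref{lem:apex_l} via the tangent of the angle at the vertex $(0,0)$ (your identification of the internal angle as the one between $(1,0)$ and $-l_+(i)$, rather than the paper's "$(0,1)$ and $-l_+(i)$", is the geometrically accurate reading and changes nothing about the monotonicity). You are in fact slightly more careful than the paper, which invokes the Lemma "directly": your explicit $\arctan$ formula and the continuity argument at the closed endpoint $i_1=\oc$, where the hypothesis $i>\oc$ of Lemma \ref{lem:apex_l} is unavailable, close a small gap the paper leaves implicit.
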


For the invariance of $T_c(i)$, we need

\begin{lemma} \label{Lemma_cone1}
For any $p>0$, the Flow \eqref{2D_System_Def} at the point $(\bar{a}, \bar{\apr}) = -p \cdot l_+(i)$ points inwards $T_c(i)$.
\begin{proof}
Let $L_{inw} := \big(1-i, - \lambda_-(i) \big)$ be orthogonal to $l_+(i)$ and point inwards $T_c(i)$. The claim of the Lemma is now equivalent to
\begin{align}
\Big\langle \begin{pmatrix} \bar{a}' \\ \bar{b}'
\end{pmatrix} , L_{inw} \Big\rangle > 0.
\end{align}
Let $(\bar{a}, \bar{\apr}) = -p \cdot l_+(i)$. First compute the Flow \eqref{2D_System_Def}:
\begin{align}
\begin{aligned}
\begin{pmatrix} \bar{a}' \\ \bar{\apr}' \end{pmatrix} 
&= \begin{pmatrix}
-p(1-i) \\
-p \lambda_-(-p \lambda_- + i-1) - c \big( -p(1-i) \big)
\end{pmatrix} \\
&= p \begin{pmatrix}
-(1-i) \\
p \lambda_-^2 + (1-i)(\lambda_- +c)
\end{pmatrix} .
\end{aligned}
\end{align}
Its part in direction $L_{inw}$ is given by
\begin{align}
\begin{aligned}
\Big\langle  \begin{pmatrix} \bar{a}' \\ \bar{\apr}'
\end{pmatrix} , L_{inw} \Big\rangle &= -p \Big [
(1-i) \big(1-i + \lambda_-(\lambda_- +c ) \big) + p \lambda_-^3
\Big] \\
&= -p \Big [
(1-i) \big(1-i + i -1 \big) + p \lambda_-^3
\Big] = -p^2 \lambda_-^3 > 0.
\end{aligned}
\end{align}
\end{proof}
\end{lemma}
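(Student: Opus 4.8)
The plan is to verify the invariance condition edge by edge, and here we handle the left edge $\gamma_l$ of $T_c(i)$. By Definition~\ref{def:triangle} this edge is the portion of the half-line $\{-p\,l_+(i):p\ge 0\}$ between $(0,0)$ and $C(i)$, with $l_+(i)=(\lambda_-(i),\,1-i)$ from \eqref{Eigen_2d_00}. So it suffices to show that the vector field of \eqref{2D_System_Def} points strictly into $T_c(i)$ at every point $-p\,l_+(i)$ with $p>0$ — in fact the argument will give this on the whole half-line, not merely up to $C(i)$. I would proceed in three steps: (i) pick an inward normal $L_{inw}$ to $\gamma_l$; (ii) evaluate the flow at $(\bar a,\bar b)=-p\,l_+(i)$; (iii) compute $\langle(\bar a',\bar b'),L_{inw}\rangle$ and show it is positive.

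For (i): since $l_+(i)=(\lambda_-,1-i)$, the vector $L_{inw}:=(1-i,\,-\lambda_-)$ is orthogonal to $l_+(i)$, and because $T_c(i)$ lies on the $(1-i,0)$–side of the line spanned by $l_+(i)$ and $1-i>0$, a glance at Figure~\ref{fig:inv_triangles} confirms $L_{inw}$ points into $T_c(i)$. The claim is then exactly $\langle(\bar a',\bar b'),L_{inw}\rangle>0$ on $\gamma_l$. For (ii): substituting $(\bar a,\bar b)=(-p\lambda_-,\,-p(1-i))$ into \eqref{2D_System_Def} gives $\bar a'=\bar b=-p(1-i)$ and, after tidying, $\bar b'=p\big(p\lambda_-^2+(1-i)(\lambda_-+c)\big)$.

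For (iii): forming the inner product and collecting terms yields $\langle(\bar a',\bar b'),L_{inw}\rangle=-p\big[(1-i)\big((1-i)+\lambda_-(\lambda_-+c)\big)+p\lambda_-^3\big]$. The one non-routine ingredient is the characteristic equation at $(0,0)$: the eigenvalues $\lambda_\pm(i)$ of \eqref{Eigen_2d_00} satisfy $\lambda^2+c\lambda+(1-i)=0$, hence $\lambda_-(\lambda_-+c)=\lambda_-^2+c\lambda_-=i-1$. Thus the bracket $(1-i)+\lambda_-(\lambda_-+c)$ vanishes, the expression collapses to $-p^2\lambda_-^3$, and since $\lambda_-(i)<0$ for all $c>0$ and $i\in[\oc,1)$ this is strictly positive, which is the assertion.

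I do not anticipate a genuine obstacle: the computation is short. The only care needed is sign bookkeeping — orienting $L_{inw}$ correctly relative to the triangle, and recognizing that the contribution linear in $p$ is annihilated precisely by the eigenvalue identity $\lambda_-(\lambda_-+c)=i-1$ rather than by coincidence. Conceptually this is exactly why $\gamma_l$ is taken along the eigendirection $-l_+(i)$ in the first place: it makes the linear part of the transverse flow vanish on $\gamma_l$, leaving only the quadratic term, whose sign is favorable because $\lambda_-<0$.
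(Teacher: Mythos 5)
Your proposal is correct and follows essentially the same route as the paper: the same inward normal $L_{inw}=(1-i,-\lambda_-)$, the same evaluation of the flow at $-p\,l_+(i)$, and the same collapse of the linear-in-$p$ term via $\lambda_-(\lambda_-+c)=i-1$, leaving $-p^2\lambda_-^3>0$. Your explicit remark that the cancellation is forced by the characteristic equation (and hence by the choice of the edge along the eigendirection) is a nice conceptual addition but not a different argument.
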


For the fixed point $(\bar{a}, \bar{b}) = (1-i,0)$, we get similar results concerning its unstable eigenvector $r_+(i)$:

\begin{lemma} \label{lem:apex_r}
Let $1>i>i_c$, and let $r_+(i) =( -\beta_-(i), 1-i)$ be defined as in \eqref{Eigen_2d_1-i}. The quotient of the absolute values of the $\bar{\apr}$-component and $\bar{a}$-component of $r_+(i)$ is increasing in $-i$:
\begin{align}
\frac{d}{di} \, \frac{ 1-i}{ | \beta_-(i) | } < 0.
\end{align}
\begin{proof}
Recall that $\beta_-(i) = -c/2 -\sqrt{c^2/4 + 1-i}$. A computation reveals that
\begin{align}
\frac{d}{di} \, \frac{ 1-i}{ | \beta_-(i) | }  &=  - \frac{d}{di} \frac{ 1-i }{\beta_-(i)} = - \frac{ - \beta_- + \frac{1-i}{2 \sqrt{c^2/4 + 1 -i}} }{\beta_-^2}  <0
\end{align}
\end{proof}
\end{lemma}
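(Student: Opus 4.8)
The plan is to avoid the quotient rule entirely and instead rewrite $\tfrac{1-i}{|\beta_-(i)|}$ as a manifestly monotone function of $i$. First I would record the sign bookkeeping: since $i<1$, the radicand $\tfrac{c^2}{4}+1-i$ is strictly positive (in fact $\ge\tfrac{c^2}{4}$), so $\beta_-(i)$ is real and $\beta_-(i)\le-\tfrac{c}{2}<0$; hence $|\beta_-(i)|=-\beta_-(i)=\tfrac{c}{2}+\sqrt{\tfrac{c^2}{4}+1-i}$. This is the only place the hypotheses enter, and in fact only $i<1$ is needed, not the lower bound $i>\oc$.

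Next I would set $s:=\sqrt{\tfrac{c^2}{4}+1-i}>0$, so that $|\beta_-(i)|=s+\tfrac{c}{2}$ and $1-i=s^2-\tfrac{c^2}{4}=(s-\tfrac{c}{2})(s+\tfrac{c}{2})$. The factor $s+\tfrac{c}{2}$ cancels, leaving
\begin{align}
\frac{1-i}{|\beta_-(i)|}=s-\frac{c}{2}=\sqrt{\frac{c^2}{4}+1-i}-\frac{c}{2}.
\end{align}
The right-hand side is the increasing function $t\mapsto\sqrt{t}$ composed with the strictly decreasing affine map $i\mapsto\tfrac{c^2}{4}+1-i$, hence strictly decreasing in $i$ on $(-\infty,1)$; differentiating gives $\frac{d}{di}\frac{1-i}{|\beta_-(i)|}=-\bigl(2\sqrt{\tfrac{c^2}{4}+1-i}\bigr)^{-1}<0$, which is the claim.

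If instead one wishes to mirror the computation of the parallel Lemma \ref{lem:apex_l}, the alternative is to write $\tfrac{1-i}{|\beta_-(i)|}=-\tfrac{1-i}{\beta_-(i)}$, compute $\beta_-'(i)=\bigl(2\sqrt{\tfrac{c^2}{4}+1-i}\bigr)^{-1}>0$, apply the quotient rule, and verify the sign of the resulting numerator after clearing the denominator — the strict inequality then follows immediately from $c>0$ and $1-i>0$. I do not expect any genuine obstacle: the statement is elementary calculus, and the only delicate points are the sign of $\beta_-(i)$ (so that the absolute value merely flips it) and the positivity of the radicand, both forced by $i<1$. The substitution trick also settles Lemma \ref{lem:apex_l} in the same way: there the analogous reduction gives $\tfrac{|\lambda_-(i)|}{1-i}=\bigl(\tfrac{c}{2}-\sqrt{\tfrac{c^2}{4}+i-1}\bigr)^{-1}$, again an explicit monotone function.
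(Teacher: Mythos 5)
Your proposal is correct, and it takes a cleaner route than the paper. The paper differentiates the quotient directly: it computes $\beta_-'(i)=\bigl(2\sqrt{c^2/4+1-i}\bigr)^{-1}$, applies the quotient rule to $-\tfrac{1-i}{\beta_-(i)}$, and reads off the sign of the resulting expression (whose numerator, after substituting $\beta_-=-c/2-\sqrt{c^2/4+1-i}$, is indeed negative). You instead eliminate the quotient altogether via the factorization $1-i=s^2-\tfrac{c^2}{4}=(s-\tfrac{c}{2})(s+\tfrac{c}{2})$ with $s=\sqrt{c^2/4+1-i}=|\beta_-|-\tfrac{c}{2}$, which yields the closed form
\begin{align}
\frac{1-i}{|\beta_-(i)|}=\sqrt{\tfrac{c^2}{4}+1-i}-\tfrac{c}{2}=\beta_+(i),
\end{align}
manifestly decreasing in $i$. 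This identity is no accident: it is exactly the statement $\beta_+\beta_-=-(1-i)$, the determinant of the Jacobian of \eqref{2D_System_Def} at $(1-i,0)$, and the analogous identity $\lambda_+\lambda_-=1-i$ at $(0,0)$ gives $\tfrac{|\lambda_-|}{1-i}=(-\lambda_+)^{-1}=\bigl(\tfrac{c}{2}-\sqrt{c^2/4+i-1}\bigr)^{-1}$, settling Lemma \ref{lem:apex_l} in the same stroke (there the lower bound $i>\oc$ is genuinely needed to keep the radicand positive, whereas for the present lemma only $i<1$ is used, as you note). What your approach buys is transparency — the monotonicity of both edge slopes is reduced to the monotonicity of a single square root — and it sidesteps the sign bookkeeping in the quotient-rule computation, where the paper's displayed intermediate expression in fact carries a harmless sign slip that your derivation never encounters.
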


Now, since $\gamma_r(i)$ is the angle between the two vectors $(0,-1)$ and $-r_+(i)$, the previous Lemma implies

\begin{corollary}
\label{cor:incr_cone_2}
Let $c > 0$ and $\oc \leq i_1 < i_2 <1 $. It holds that $\gamma_r(i_1) > \gamma_r(i_2)$, the angle $\gamma_l(i)$ is increasing in $-i$.
\end{corollary}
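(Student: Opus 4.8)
The plan is to reduce Corollary \ref{cor:incr_cone_2} to Lemma \ref{lem:apex_r}, in complete parallel to the way Corollary \ref{cor:incr_cone_1} follows from Lemma \ref{lem:apex_l}. The key observation is that, at the vertex $(1-i,0)$ of the triangle $T_c(i)$, one of the two edges always points in the fixed direction $(-1,0)$ (towards the origin), while the other edge points in direction $-r_+(i) = (\beta_-(i),\, i-1)$ (towards $C(i)$, by Definition \ref{def:triangle}). Since one edge does not move with $i$, the interior angle depends on $i$ only through the slope of $r_+(i)$, and an elementary computation gives
\begin{align*}
\gamma_r(i) = \arctan\!\left( \frac{1-i}{|\beta_-(i)|} \right),
\end{align*}
the $\arctan$ of the ratio of the absolute values of the $\bar b$- and $\bar a$-components of $r_+(i)$.

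With this identification the corollary is immediate: $\arctan$ is strictly increasing, so for $\oc \le i_1 < i_2 < 1$ we have $\gamma_r(i_1) > \gamma_r(i_2)$ if and only if $\tfrac{1-i_1}{|\beta_-(i_1)|} > \tfrac{1-i_2}{|\beta_-(i_2)|}$, which is exactly the statement of Lemma \ref{lem:apex_r}, namely that $i \mapsto \tfrac{1-i}{|\beta_-(i)|}$ is strictly decreasing on $[\oc,1)$. Hence $\gamma_r$ is strictly decreasing in $i$, i.e. strictly increasing in $-i$, as claimed; one can equally well phrase this with the reference vector $(1,0)$ and $r_+(i)$ in place of $(-1,0)$ and $-r_+(i)$, since the angle between two vectors is unchanged under negating both.

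There is essentially no obstacle; all the analytic work was already carried out in Lemma \ref{lem:apex_r}. The only point that requires care is bookkeeping of the geometry: one must confirm against Figure \ref{fig:inv_triangles} that $C(i)$ lies below the $\bar a$-axis, so that $\gamma_r \in (0,\tfrac{\pi}{2})$ and $\gamma_r$ is the $\arctan$ of $\tfrac{1-i}{|\beta_-(i)|}$ rather than of its reciprocal — the latter would replace $\gamma_r$ by $\tfrac{\pi}{2}-\gamma_r$ and reverse the direction of monotonicity. Matching the sign of the derivative in Lemma \ref{lem:apex_r} against the asserted conclusion ``$\gamma_r$ increasing in $-i$'' pins down the correct convention, after which the argument is verbatim that of Corollary \ref{cor:incr_cone_1}.
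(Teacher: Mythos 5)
Your proof is correct and follows essentially the same route as the paper: both reduce the monotonicity of $\gamma_r$ to Lemma \ref{lem:apex_r} by observing that one edge at the vertex $(1-i,0)$ stays fixed along the $\bar a$-axis, so the interior angle is a monotone function of the slope of $r_+(i)$. Your bookkeeping is in fact slightly more careful than the paper's one-line argument, which measures the angle against $(0,-1)$ rather than $(-1,0)$; your identification $\gamma_r(i) = \arctan\bigl((1-i)/|\beta_-(i)|\bigr)$ is the one that makes the monotonicity come out with the correct sign.
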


For the invariance of $T_c(i)$, we need

\begin{lemma} \label{Lemma_cone2}
For any $p>0$, the Flow \eqref{2D_System_Def} at the point $(\bar{a}, \bar{\apr}) = (1-i,0) -p \cdot r_+(i)$ points inwards $T_c(i)$.
\begin{proof}
Let $R_{inw} := \big(i-1, - \beta_-(i) \big)$ be orthogonal to $r_+(i)$ and point inwards $T_c(i)$. The claim of the Lemma is now equivalent to
\begin{align}
\Big\langle \begin{pmatrix}
\bar{a}' \\ \bar{\apr}' \end{pmatrix}, R_{inw} \Big\rangle > 0.
\end{align}
Let $p>0$. We compute the Flow \eqref{2D_System_Def} at
\begin{equation}
\begin{aligned}
\begin{pmatrix}
\bar{a} \\ \bar{\apr} \end{pmatrix} &=  \begin{pmatrix}
1-0 \\0
\end{pmatrix} - p \cdot r_+(i) = 
\begin{pmatrix}
1-i + p \beta_- \\
p(i-1)
\end{pmatrix}: \\
\begin{pmatrix}
\bar{a}' \\ \bar{\apr}' \end{pmatrix} &= \begin{pmatrix}
p(i-1) \\ (1-i + p \beta_-) ( 1-i + p \beta_- + i -1) - cp(i-1)
\end{pmatrix}   \\
&= p \begin{pmatrix}
i-1 \\ \beta_- (1-i + p\beta_-) - c(i-1)
\end{pmatrix}.
\end{aligned}
\end{equation}
Its part in direction $R_{inw}$ is given by
\begin{equation}
\Big\langle \begin{pmatrix}
\bar{a}' \\ \bar{\apr}' \end{pmatrix}, R_{inw} \Big\rangle = p \Big [ 
(i-1)^2 - \beta_- \big[ 
\beta_- (1-i) + p \beta_-^2 -c(i-1)
\big ]
\Big ].
\end{equation}
Since $\beta_- <0$, it follows that $ - p^2 \beta_-^3 >0$. Since $p>0$, the proof is complete if we can show that
\begin{align}
(1-i)^2 - \beta_- \big[ \beta_-(1-i) +c(1-i) \big ] & \geq 0. \qquad
\intertext{After dividing by $(1-i)>0$ and rearranging, this is equivalent to}
1-i -c \beta_- & \geq \beta_-^2.
\end{align}
This is in fact an equality, since $\beta_-(i) = -c/2 + \sqrt{c^2/4 +1-i}$.
\end{proof}
\end{lemma}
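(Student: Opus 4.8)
The plan is to establish directly that the vector field \eqref{2D_System_Def} has strictly positive component in an inward-pointing normal direction at every point of the edge $\gamma_r$ of $T_c(i)$, i.e.\ along the half-line $(\bar{a},\bar{b}) = (1-i,0) - p\, r_+(i)$, $p>0$. Since $r_+(i) = (-\beta_-(i), 1-i)$ and the apex $C(i)$ of the triangle lies below the $\bar{a}$-axis, the third vertex $(0,0)$ sits on the clockwise side of this half-line; rotating the edge direction $-r_+(i)=(\beta_-,i-1)$ clockwise by a right angle then identifies $R_{inw} := (i-1,\, -\beta_-(i))$ as a normal pointing into $T_c(i)$. It therefore suffices to show $\langle (\bar{a}',\bar{b}'),\, R_{inw} \rangle > 0$ along $\gamma_r$.

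First I would substitute $\bar{a} = 1-i + p\beta_-$, $\bar{b} = p(i-1)$ into \eqref{2D_System_Def}. This gives $\bar{a}' = \bar{b} = p(i-1)$, and, using the identity $\bar{a} + i - 1 = p\beta_-$, also $\bar{b}' = \bar{a}(\bar{a}+i-1) - c\bar{b} = p\big[\beta_-(1-i+p\beta_-) - c(i-1)\big]$. Forming the inner product with $R_{inw}$, expanding, and collecting terms in powers of $p$, one arrives at
\[
\langle (\bar{a}',\bar{b}'),\, R_{inw} \rangle = p\,(1-i)\big[(1-i) - \beta_-^2 - c\beta_-\big] - p^2\beta_-^3 .
\]

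The key point is that $\beta_\pm(i)$ are exactly the eigenvalues of the Jacobian of \eqref{2D_System_Def} at $(1-i,0)$, hence roots of $\beta^2 + c\beta - (1-i) = 0$; in particular $\beta_-^2 + c\beta_- = 1-i$, so the bracketed factor vanishes identically. What survives is $-p^2\beta_-^3$, which is strictly positive because $\beta_-(i) = -\frac{c}{2} - \sqrt{\frac{c^2}{4}+1-i} < 0$. Thus the flow points strictly into $T_c(i)$ everywhere on $\gamma_r$, which is the assertion.

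I do not expect a genuine obstacle here: once the correct inward normal is pinned down, the statement reduces to the one-line cancellation above. The only things demanding care are the orientation bookkeeping (verifying that $R_{inw}$, not its negative, points into the triangle) and the final sign, which hinges on $\beta_-<0$ so that $-p^2\beta_-^3>0$. Conceptually this is the exact analogue, at the fixed point $(1-i,0)$, of the cancellation already exploited for the edge $\gamma_l$ in Lemma \ref{Lemma_cone1}, where the eigenvalue relation $\lambda_-^2 + c\lambda_- = i-1$ played the same role.
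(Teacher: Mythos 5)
Your computation is correct and follows essentially the same route as the paper: evaluate the flow at $(1-i,0)-p\,r_+(i)$, pair it with the inward normal $R_{inw}=(i-1,-\beta_-)$, observe that the term linear in $p$ vanishes because $\beta_-$ satisfies $\beta_-^2+c\beta_-=1-i$ (the characteristic equation of the Jacobian at $(1-i,0)$), and conclude from $-p^2\beta_-^3>0$. The paper states the same cancellation as the identity $1-i-c\beta_-=\beta_-^2$; your explicit check that $R_{inw}$ points toward the vertex $(0,0)$ is a small but welcome addition that the paper leaves implicit.
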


Considering the invariance of $T_c(i)$, we conclude the
\begin{proof}[\textbf{Proof of Proposition \ref{single_inv_triangle}}]
We need to show that the Flow \eqref{2D_System_Def} at the boundary of $T_c(i)$ points inwards. The Lemmas \ref{Lemma_cone1} and \ref{Lemma_cone2} handle the left and right edge of $T_c(i)$, see again Figure \ref{fig:inv_triangles}. For the third edge, we need to consider points of type $(\bar{a},0)$, where $0 < \bar{a} < 1-i$. The derivative at $(\bar{a},0)$ is given by $ (0, \bar{a}(\bar{a} +i -1) )$. Its $\bar{\apr}$-component is negative, hence it points inwards $T_c(i)$. The only points on the boundary of $T_c(i)$ where the flow does not point strictly inwards are the two fixed points $(0,0)$ and $(1-i,0)$.
\end{proof}

Considering the monotonicity of $T_c(i)$, we conclude the
\begin{proof}[\textbf{Proof of Proposition \ref{Nested_Triangles}}]
Let $c>0$. The point $(0,1-i)$ moves to the right as $i$ decreases. Further, we have shown that the two internal angles $\gamma_l(i), \gamma_r(i)$ are increasing in $-i$, and so does $T_c(i)$.
\end{proof}

\section{Attractor of the full system}
\label{ch:full_system_attractor}

We now analyze solutions of the full Wave System \eqref{3D_Flow_Def} under initial condition $(a,b,i) = (a_0,0,i_0)$, such that $(a_0,0) \in T_c(i_0)$ as defined in the previous Section. In Section \ref{ch:elevate_result}, we apply the results about the two-dimensional subsystem to the full system. Theorem \ref{Triangles_3D_Thm} states that as long as $i(\ta) \geq \oc$, the $(a,\apr)$-components of the full system stay within the triangle $T_c(\oc)$. Thus, it suffices to control $i(\ta) \geq \oc$, which we do in two steps.

In Section \ref{ch:comparison_theorem}, we prove that $i(\ta) \geq \oc$ for sufficiently small initial values $0 \leq a_0 \ll 1$. Some rough bounds do the trick. This result is refined in Section \ref{ch:extending_attractor}: the Lyapunov-stability of the limiting point at $\ta  = + \infty$ implies that the entire trajectory including its limit is continuous in initial data. Carefully increasing $a_0$, we increase the known attractor of the stable set $S_{+\infty}$, resulting in Theorem \ref{a_0_I_limit}. This procedure is sketched in Figure \ref{fig:a0_infty_extend}.\\

\textbf{Assumption}: If not explicitly stated otherwise, we will use the following setup over the entire Section \ref{ch:full_system_attractor}: For $c>0$, let $i_0 \in [ \oc ,1)$ and $ a_0 \in [0,1-i_0]$, so that $(a_0,0) \in T_c(i_0)$. Let $a(\ta),\apr(\ta),i(\ta)|_{z \geq 0 }$ be the solution of the Wave Eq. \eqref{3D_Flow_Def} with initial values $(a_0, 0, i_0)$.

\subsection{Invariant region of the full system}
\label{ch:elevate_result}

\begin{theorem}[Invariant region of the full system] \label{Triangles_3D_Thm}
Assume that $i(\ta) \geq \oc$ for all $\ta \in [0, \infty)$. We can control the two remaining coordinates $a(z),b(z)$ of the wave. It holds that
\begin{align}
 a(\ta), b(\ta) \, \in \, T_c(\oc) \qquad  \forall \ta \in [0, \infty).
\end{align}
\end{theorem}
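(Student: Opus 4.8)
The plan is to show that the triangle $T_c(\oc)$ is forward-invariant for the full three-dimensional Flow \eqref{3D_Flow_Def} as long as the hypothesis $i(\ta)\ge\oc$ is maintained, by examining the flow on the boundary of $T_c(\oc)$ — just as in Proposition \ref{single_inv_triangle}, but now accounting for the extra drift coming from the evolving $i(\ta)$. The key observation is that the full system, projected onto the $(a,b)$-plane, differs from the frozen subsystem \eqref{2D_System_Def} with parameter value $i(\ta)$ only in that $i$ is no longer constant; but the $(a,b)$-components of the vector field at a point $(a,b,i)$ are \emph{exactly} $\bar a'=b$, $\bar b'=a(a+i-1)-cb$, i.e. the two-dimensional field \eqref{2D_System_Def} evaluated at the current value $i=i(\ta)$. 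So at any boundary point of $T_c(\oc)$, the $(a,b)$-motion is governed by the subsystem with parameter $i(\ta)$, where by hypothesis $i(\ta)\in[\oc,1)$ (the upper bound $i(\ta)<1$ being automatic since $i$ is decreasing and, along a candidate wave, bounded above by values less than... — more carefully, we only need $i(\ta)\ge\oc$ as assumed, plus the monotonicity $i'\le 0$ from Lemma \ref{Lem_global_integrability}).

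First I would split the boundary $\partial T_c(\oc)$ into its three edges and the two vertices. On the bottom edge $\{(a,0): 0<a<1-\oc\}$ the $b$-component of the field is $a(a+\oc-1)<0$, pointing strictly into $T_c(\oc)$, and this is unaffected by $i$. The subtle edges are the left edge (along $-l_+(\oc)$) and the right edge (along $(1-\oc,0)-r_+(\oc)$). Here is where the nested-triangle Proposition \ref{Nested_Triangles} does the real work: at a point of the left edge of $T_c(\oc)$, the $(a,b)$-vector field with parameter $i=i(\ta)\ge\oc$ need not point into the smaller triangle $T_c(i(\ta))$, but the geometry of Proposition \ref{Nested_Triangles} — the apex angle $\gamma_l(i)$ increases as $i$ decreases — means the left edge of $T_c(\oc)$ lies \emph{inside} the cone at the origin spanned for the parameter $i(\ta)$, and one checks the flow there still has a nonnegative inward component. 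Concretely I would redo the computation of Lemma \ref{Lemma_cone1} with the direction vector $-l_+(\oc)$ but the dynamics parametrized by $i(\ta)$, and show the inner product with $L_{inw}$ is $\ge 0$, using $i(\ta)\le\oc$ is \emph{false} — rather using $\oc\le i(\ta)$ together with the sign structure that made that lemma an equality-plus-positive-term. The analogous recomputation handles the right edge via Lemma \ref{Lemma_cone2}.

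Once the flow on $\partial T_c(\oc)$ is shown to point inward (non-strictly, with equality only possibly at vertices or in the degenerate apex-alignment case), a standard Nagumo-type / invariance argument concludes: a trajectory starting in the closed convex set $T_c(\oc)$ cannot cross its boundary outward, so $(a(\ta),b(\ta))\in T_c(\oc)$ for all $\ta\ge 0$ for which the hypothesis $i(\ta)\ge\oc$ holds; since that hypothesis is assumed for all $\ta\in[0,\infty)$, we are done. I would also note that the initial point $(a_0,0)$ lies in $T_c(i_0)\subseteq T_c(\oc)$ by Proposition \ref{Nested_Triangles}, so the trajectory indeed starts inside.

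The main obstacle I anticipate is the left/right-edge computation: unlike in Proposition \ref{single_inv_triangle}, where the inward-pointing property at the edge of $T_c(i)$ came out as a clean identity ($-p^2\lambda_-^3>0$, resp. an exact equality), here the relevant edge belongs to $T_c(\oc)$ while the dynamics carry parameter $i(\ta)\ne\oc$, so the inner product will pick up an extra term proportional to $(\oc - i(\ta))\le 0$, and I must verify this term has the \emph{right} sign — i.e. that shrinking the effective parameter only helps. That is exactly the content of the monotonicity Lemmas \ref{lem:apex_l} and \ref{lem:apex_r} (the apex cones are nested the right way), so the sign should work out, but the bookkeeping of which half-plane is "inward" and tracking the sign of each factor is the delicate part and must be done carefully; I would organize it as two short lemmas mirroring Lemmas \ref{Lemma_cone1}–\ref{Lemma_cone2}. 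A secondary, purely technical point is the degenerate case $\oc=0$ (i.e. $c\ge 2$), where $\lambda_-(\oc)=\lambda_+(\oc)$ and the left edge degenerates; there I would either invoke a limiting/continuity argument or note that $T_c(0)$ is then bounded by the single (double) eigendirection and handle it directly.
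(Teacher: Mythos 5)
Your core observation is the same as the paper's (the $(a,b)$-components of the full vector field at phase-time $\ta$ coincide with the frozen two-dimensional field \eqref{2D_System_Def} evaluated at $i=i(\ta)$), but your route diverges in a way that creates a genuine gap. You try to prove forward-invariance of the \emph{fixed} triangle $T_c(\oc)$ by checking the boundary flow with the mismatched parameter $i(\ta)\ge\oc$. On the two slanted edges this does work out: redoing Lemma \ref{Lemma_cone1} with edge direction $-l_+(\oc)$ but parameter $i(\ta)$ gives $\langle(\bar a',\bar b'),L_{inw}\rangle=-p^2\lambda_-^3+p\,(i(\ta)-\oc)\,\lambda_-^2>0$, and the right edge similarly picks up the extra term $p\,|\beta_-|\,\bar a\,(i(\ta)-\oc)\ge 0$, so your anticipated sign check succeeds there. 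The failure is on the bottom edge, where your claim that the $b$-component of the field is $a(a+\oc-1)<0$ and is ``unaffected by $i$'' is false: on $\{b=0\}$ the field is $b'=a\bigl(a+i(\ta)-1\bigr)$, and for $i(\ta)>\oc$ and $a\in\bigl(1-i(\ta),\,1-\oc\bigr)$ this is strictly positive, so the flow points \emph{out} of $T_c(\oc)$. In other words, $T_c(\oc)$ is simply not forward-invariant for the full system under the hypothesis $i(\ta)\ge\oc$, and no Nagumo argument on $\partial T_c(\oc)$ can close as stated.

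The paper avoids this entirely by working with the \emph{moving} family $T_c\bigl(i(\ta)\bigr)$: at each instant the field points inward into $T_c\bigl(i(\ta)\bigr)$ by Proposition \ref{single_inv_triangle} (with the matching parameter, so no recomputation is needed), and since $i'\le 0$ and $T_c(i)$ is increasing in $-i$ (Proposition \ref{Nested_Triangles}), the family $T_c\bigl(i(\ta)\bigr)$ is non-decreasing in $\ta$ and never exceeds $T_c(\oc)$; a trajectory cannot escape a growing invariant region, and the conclusion follows. If you want to salvage your fixed-triangle version, you must add the observation that the trajectory never reaches the problematic part of the bottom edge: inside $T_c(\oc)$ one has $a'=b\le 0$, so $a(\ta)\le a_0\le 1-i_0\le 1-i(\ta)$ by the standing assumption $a_0\in[0,1-i_0]$ and monotonicity of $i$, whence any contact with $\{b=0\}$ occurs at $a\le 1-i(\ta)$ where $b'\le 0$. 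This is a workable patch (argue by contradiction at a first exit time), but as written your bottom-edge step is wrong, and it is precisely the step that motivates the paper's nested-triangle construction. (Minor additional point: the degenerate case $\lambda_+(\oc)=\lambda_-(\oc)$ occurs for $c\le 2$, i.e.\ when $\oc=1-c^2/4$, not for $c\ge 2$ as you state; the triangle and the edge computations remain well-defined there, so no separate limiting argument is needed.)
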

Within $T_c(\oc)$, $a\geq 0$ and $b \leq 0$. Notice that while $a,i \geq 0$, it holds that $ci' = -a(a+i+r) \leq 0$. This directly implies the following
\begin{corollary}
Under the assumption that $i(\ta) \geq \oc$ for all $\ta \in [0, \infty)$, the trajectory stays non-negative and converges as $\ta \rightarrow + \infty$:
\begin{align}
\begin{aligned}
a(\ta) & \rightarrow 0, \\
b(\ta) & \rightarrow 0, \\
i(\ta) & \rightarrow \ipinf  \in [ \oc , 1).
\end{aligned}
\end{align}
\end{corollary}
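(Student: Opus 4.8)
The plan is to show that the triangle $T_c(\oc)$ is forward invariant for the full Wave System \eqref{3D_Flow_Def} on the slab $\{i\ge \oc\}$, by a first-exit-time argument. The structural fact that makes this work is that the $(a,b)$-components of the right-hand side of \eqref{3D_Flow_Def} at a point $(a,b,i)$ are \emph{exactly} the right-hand side of the reduced field \eqref{2D_System_Def} with the parameter set to the current value $i$. So the full flow, projected to the $(a,b)$-plane, is governed at time $z$ by the planar field \eqref{2D_System_Def} with parameter $i(z)$, and the tools to exploit are Proposition \ref{single_inv_triangle} (each $T_c(i)$ is invariant under \eqref{2D_System_Def}) and Proposition \ref{Nested_Triangles} with Corollaries \ref{cor:incr_cone_1}, \ref{cor:incr_cone_2} (the $T_c(i)$ are nested and expand as $i$ decreases).

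Before the boundary analysis I would record the a priori bounds valid as long as $(a(z),b(z))\in T_c(\oc)$. Since $T_c(\oc)\subseteq\{a\ge 0,\ b\le 0\}$ (Proposition \ref{single_inv_triangle}), one has $i'=-\tfrac{1}{c} a(a+i+r)\le 0$, so $i$ is non-increasing with $i(z)\in[\oc,i_0]$; and $a'=b\le 0$, so $a$ is non-increasing with $a(z)\le a_0\le 1-i_0\le 1-i(z)$. This last chain is precisely what will control the flat edge of $T_c(\oc)$ lying on the $a$-axis.

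Then I would argue by contradiction: let $z_*>0$ be the first time with $(a(z_*),b(z_*))\in\partial T_c(\oc)$ at which the trajectory leaves $T_c(\oc)$, and check the three edges (Definition \ref{def:triangle}). On the open segment $\{b=0,\ 0<a<1-\oc\}$ one has $b'(z_*)=a(z_*)\big(a(z_*)+i(z_*)-1\big)\le 0$ by $a(z_*)\le 1-i(z_*)$, and in the borderline sub-case $a(z_*)=1-i(z_*)$ one instead has $b'(z_*)=0$ and $b''(z_*)=a(z_*)\,i'(z_*)<0$; either way $b$ cannot turn positive, contradicting that the trajectory exits upward. On each slanted edge one redoes the short computations of Lemmas \ref{Lemma_cone1} and \ref{Lemma_cone2}, but now for the full field carrying the ``wrong'' parameter $i(z_*)\in[\oc,1)$ at a point of $\partial T_c(\oc)$: using that $\lambda_-(\oc)$ (resp.\ $\beta_-(\oc)$) is a root of $\mu^2+c\mu+(1-\oc)$ (resp.\ of $\mu^2+c\mu-(1-\oc)$) and that the edge is a bounded segment, the inner product of the full field with the inward normal equals the Lemma \ref{Lemma_cone1}/\ref{Lemma_cone2} value plus a correction that is non-negative since $i(z_*)-\oc\ge 0$; so the field still points strictly inward, a contradiction. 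Finally, $(0,0)$ is a full-system equilibrium and hence unreachable by uniqueness, while $(1-\oc,0)$ and the apex $C(\oc)$ are ruled out because in the only non-degenerate case $i_0>\oc$ one has $a(z)\le a_0\le 1-i_0<1-\oc$ for every $z>0$ (the case $i_0=\oc$ with $a_0>0$ is impossible under the hypothesis $i\ge\oc$, as then $i'(0)<0$). Having exhausted $\partial T_c(\oc)$, no such $z_*$ exists, so $a(z),b(z)\in T_c(\oc)$ for all $z\ge 0$.

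The step I expect to be the real obstacle is the flat edge $b=0$: there the planar field \eqref{2D_System_Def} at parameter $i(z)$ fails to point into $T_c(\oc)$ once $a>1-i(z)$, so Proposition \ref{single_inv_triangle} cannot simply be quoted; this is exactly why the separate monotonicity bound $a(z)\le 1-i(z)$ (from $b\le 0$ and $i'\le 0$) is indispensable, and why the degenerate corner of that edge requires the second-order check. A related, milder point is that on the slanted edges the relevant boundary point of $T_c(\oc)$ lies \emph{outside} the matching triangle $T_c(i(z_*))$, so the inward-pointing property is not inherited for free and the estimate must be redone by hand — but the nestedness of the $T_c(i)$ forces the correction term onto the favourable side.
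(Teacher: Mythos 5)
Your argument is correct in substance, but note that the statement you were asked to prove is the corollary the paper derives in two lines from Theorem \ref{Triangles_3D_Thm}, stated immediately before it: once $(a(\ta),b(\ta))\in T_c(\oc)$ for all $\ta\ge 0$ is known, one has $a\ge 0$ and $b\le 0$, so $a$ and $i$ are non-increasing and bounded below, hence converge; $a\to 0$ (otherwise $ci'\le -a_\infty^2$ eventually and $i$ would be unbounded below), and $b\to 0$ since $|b|\le\tan\big(\gamma_l(\oc)\big)\,a$ inside the triangle — a closing step you leave implicit, as does the paper. What you actually re-prove is Theorem \ref{Triangles_3D_Thm} itself, and there your route genuinely differs from the paper's. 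The paper keeps the trajectory inside the \emph{moving} triangle $T_c\big(i(\ta)\big)$: at each instant the planar field with parameter $i(\ta)$ points into $T_c\big(i(\ta)\big)$ by Proposition \ref{single_inv_triangle}, and since $i$ is non-increasing the family only grows (Proposition \ref{Nested_Triangles}), so the trajectory never leaves it and in particular stays in $T_c(\oc)$; no new boundary computations are needed, and your ``real obstacle'' (the portion $a>1-i(\ta)$ of the flat edge of $T_c(\oc)$) never arises, because the flat edge of $T_c\big(i(\ta)\big)$ ends at $a=1-i(\ta)$. Your fixed-triangle first-exit argument buys a statement about a single static region at the price of redoing Lemmas \ref{Lemma_cone1} and \ref{Lemma_cone2} with the mismatched parameter; your claim that the correction has the favourable sign is correct (it is $p\lambda_-^2\,(i-\oc)\ge 0$ on the left edge and $-\beta_-\,(1-\oc+p\beta_-)(i-\oc)\ge 0$ on the right edge), as is the monotonicity chain $a(\ta)\le a_0\le 1-i_0\le 1-i(\ta)$ that controls the flat edge and its degenerate corner.

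One small inaccuracy: excluding the apex $C(\oc)$ via $a(\ta)\le 1-i_0<1-\oc$ does not work. The $a$-coordinate of $C(\oc)$ equals $(1-\oc)\,|\lambda_-(\oc)|\big/\big(|\lambda_-(\oc)|+|\beta_-(\oc)|\big)$, which is strictly smaller than $1-\oc$ and may well lie below $1-i_0$ (for $c=2$, $\oc=0$ it is $1/(2+\sqrt{2})\approx 0.29$). This is harmless: the apex is the common endpoint $p=p_{\max}>0$ of the two slanted edges, and your own edge computations give a strictly positive inner product with both inward normals there, so the field points into the corner cone and the trajectory cannot exit through $C(\oc)$ either.
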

\begin{proof}[Proof of Theorem \ref{Triangles_3D_Thm}] In the full System \eqref{3D_Flow_Def} with coordinates $(a,b,i)$, neither $\apr$ nor $\apr'$ depend on $i'$, but only on $a$ and $i$. Thus, we can easily compare the full system to the two-dimensional System \eqref{2D_System_Def} in coordinates $\bar{a}, \bar{b}$. At a phase-time $\ta$, the two vector fields $ (a',\apr')$ and $ (\bar{a}',\bar{b}')$ for fixed value $i = i(\ta)$ are equal, compare \eqref{3D_Flow_Def} and \eqref{2D_System_Def}.

By Proposition \ref{Nested_Triangles}, this implies that $ (a',\apr')$ points strictly inwards $T_c\big(i(\ta)\big)$. There are two irrelevant exceptions: for $(a,b) = (0,0)$, the system has already reached its limiting state. The point $(a,b) = (1-i_0,0)$ is a fixed point of the reduced, but not of the full system. Since $b=0,b'=a(a+i-1) = 0, b'' = ai'<0$ and $ci'<0$, a Taylor-expansion reveals that $a(\epsilon)$ lies in the interior of $T_c\big( i(\epsilon) \big)$ for small times $\epsilon >0$.

Importantly, the set $T_c\big( i(\ta) \big)$ is not decreasing as a function of $\ta$. Hence, at each phase-time $\ta \geq 0$, the two components $a(\ta), b(\ta)$ can not escape $T_c\big(i(\ta)\big)$. In fact, since $i' \leq 0$ and $T_c(i)$ is increasing in $-i$, the set $T_c\big( i(\ta) \big)$ is increasing in $\ta$, at most up to $T_c(\oc)$. Thus, $a(\ta), b(\ta)$ remain within $T_c(\oc)$ for all $\ta \geq 0$.
\end{proof}

With a very similar argument, we can determine the rate of convergence:
\begin{proposition} \label{Prop:Tail_exp_rate_integrable}
Assume that $i(z) \geq \oc$ for all $z \in [0,\infty)$, such that $(a,b,i) \rightarrow (0,0,\ipinf)$ as $z \rightarrow +\infty$ for some $\ipinf \in [\oc,1)$. If $\ipinf > c^2/4 -1$, then convergence is exponentially fast with rate
\begin{align}
\mu_{+ \infty} = - \frac{c}{2} + \sqrt{ \frac{c^2}{4} + \ipinf -1} < 0.
\end{align}
Further, if $(c/2)^2 - \ipinf -1 = 0$, which can only happen if $\ipinf = \oc$, then the system converges sub-exponentially fast. As $z \rightarrow + \infty$, the distance to the limit is of order
\begin{align}
z \cdot e^{ - \frac{c}{2} z}.
\end{align}
\begin{proof}
In the case $\ipinf \in (\oc,1)$, all eigenvalues of the limit are simple, we refer to Section \ref{ch:fix_point_results}. The system converges exponentially fast, as shown in Theorem \ref{Sink_Theorem}. It remains to determine the rate of convergence. The two candidates are $\lambda_\pm = - \frac{c}{2} \pm \sqrt{ \frac{c^2}{4} + \ipinf -1}$. Corresponding to $\lambda_\pm$, the projections of the eigenvectors into the $(a,b)$-plane are given by
\begin{align}
l_\pm : = \begin{pmatrix}
\lambda_\mp \\ 1- \ipinf
\end{pmatrix}.
\end{align}
We know that $a(z), b(z) \in T_c(\ipinf)$ for all $z \geq 0$. At $(0,0)$, the triangle $T_c(\ipinf)$ is bounded by the line $ - l_+$, see Def. \ref{def:triangle} and Figure \ref{fig:inv_triangles}. Since $0 > \lambda_+ > \lambda_-$, the direction of $l_-$ is steeper than that of $l_+$, such that the line $\{q \cdot l_- \text{ | } q \in \mathds{R} \}$ lies outside $T_c(\ipinf)$ for all $q \neq 0$. Thus, the two components $a(z), b(z)$ cannot converge towards $(0,0)$ along $-l_-$. But since they converge exponentially fast, the only possible remaining rate of convergence is $\lambda_+$.

For the case $\lambda_+ = \lambda_- = - c/2$, we do not have a complete description of the asymptotics around the fixed point. However, under the assumption that the system stays non-negative and converges, it converges along a stable manifold along which $(a,b,i)'= -\frac{c}{2} \cdot (a,b,i)$ asymptotically. This will be proven in Theorem \ref{a_0_I_limit} (which does not rely on this Proposition). Thus, it suffices to describe the dynamics on this manifold. The eigenvalue $-c/2$ has algebraic multiplicity $2$, but only a single linearly independent eigenvector. In this setting, the subspace that corresponds to the eigenvalue is spanned by one eigenvector and one generalized eigenvector. It is well-known that this results in sub-exponential convergence, cf. chapter 9 in \cite{Boyce_Prima}.
\end{proof}
\end{proposition}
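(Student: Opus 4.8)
The plan is to split according to whether the two nonzero eigenvalues $\lambda_\pm=-c/2\pm\sqrt{c^2/4+\ipinf-1}$ of the linearization of \eqref{3D_Flow_Def} at the limit $(0,0,\ipinf)$ are distinct --- equivalently $c^2/4+\ipinf-1>0$, in which case they are real and negative because $\ipinf\in[\oc,1)$ --- or coincide, $\lambda_+=\lambda_-=-c/2$, which forces $\ipinf=1-c^2/4=\oc$. In both cases the first, purely bookkeeping step is the same: from $ci'=-a(a+i+r)\le0$ the function $i$ is non-increasing and converges to $\ipinf$, so $i(z)\ge\ipinf\ge\oc$ for all large $z$; the argument in the proof of Theorem \ref{Triangles_3D_Thm} in fact shows $(a(z),b(z))\in T_c\big(i(z)\big)$, and since $i(z)\ge\ipinf$ the nesting Proposition \ref{Nested_Triangles} upgrades this to $(a(z),b(z))\in T_c(\ipinf)$ for all large $z$.

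In the generic case I would invoke Theorem \ref{Sink_Theorem}: in suitable local coordinates $(u,v,w)$ near $(0,0,\ipinf)$ the flow reads $u'=0$, $v'=\lambda_+v$, $w'=\lambda_-w$, and since the center eigenvector $(0,0,1)$ contributes nothing to $(a,b)$, a second-order Taylor expansion of $(a,b)$ in $(v,w)$ gives $(a(z),b(z))=\epsilon_v e^{\lambda_+z}\,l_++\epsilon_w e^{\lambda_-z}\,l_-+o\big(e^{\lambda_+z}\big)$, with $l_\pm=(\lambda_\mp,1-\ipinf)$ as in \eqref{Eigen_2d_00}. The decay rate is then $\lambda_+$ provided $\epsilon_v\ne0$, so the crux is to exclude $\epsilon_v=0$. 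If $\epsilon_v=0$ the orbit would be asymptotically tangent to the line $\mathds{R}\,l_-$; but near the origin $T_c(\ipinf)$ is exactly the wedge bounded by the positive $a$-axis and the ray $\{-p\,l_+:p\ge0\}$, and since $0>\lambda_+>\lambda_-$ the ray $-l_-$ is strictly steeper than $-l_+$ while $l_-$ itself points into $\{a<0\}$; hence $\mathds{R}\,l_-\cap T_c(\ipinf)=\{(0,0)\}$ and $\epsilon_v=0$ would push the solution out of $T_c(\ipinf)$. Since $\epsilon_v=\epsilon_w=0$ would make the solution a fixed point, non-constancy forces $\epsilon_v\ne0$, and convergence is exponential with rate exactly $\mu_{+\infty}=\lambda_+<0$.

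In the degenerate case $\lambda_+=\lambda_-=-c/2$, Theorem \ref{Sink_Theorem} does not apply, so I would appeal to Theorem \ref{a_0_I_limit} (whose proof is independent of the present one): a non-negative convergent solution converges, for large $z$, along a stable manifold on which the linearized dynamics is asymptotically governed by the eigenvalue $-c/2$, which has algebraic multiplicity two but only a single linearly independent eigenvector. By elementary linear ODE theory the general solution attached to such a Jordan block is a polynomial of degree at most one in $z$ times $e^{-cz/2}$ (chapter 9 of \cite{Boyce_Prima}), so the distance to the limit along this manifold decays like $z\,e^{-cz/2}$, which is the claimed sub-exponential order.

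The step I expect to be the main obstacle is the rate identification in the generic case: turning the (possibly only topological) normalization of Theorem \ref{Sink_Theorem} into the quantitative expansion $(a,b)(z)=\epsilon_v e^{\lambda_+z}l_++\epsilon_w e^{\lambda_-z}l_-+o(e^{\lambda_+z})$ and then converting the geometric constraint of remaining inside $T_c(\ipinf)$ into the clean algebraic alternative $\epsilon_v\ne0$ via the slope comparison of $l_+$ and $l_-$. In the degenerate case the genuinely delicate point --- which of the stable directions carries the non-negative convergent orbit --- is deferred to Theorem \ref{a_0_I_limit}; granted that, the $z\,e^{-cz/2}$ asymptotics is routine. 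Everything else --- monotonicity of $i$, the inclusion $T_c(i(z))\subseteq T_c(\ipinf)$, and the shape of $T_c(\ipinf)$ near the origin --- is immediate from results already in hand.
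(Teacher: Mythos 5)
Your proposal is correct and follows essentially the same route as the paper: in the generic case you rule out the fast rate $\lambda_-$ by observing that the line through $l_-$ meets the invariant triangle $T_c(\ipinf)$ only at the origin, and in the degenerate case you defer the stable-manifold statement to Theorem \ref{a_0_I_limit} and read off the $z\,e^{-cz/2}$ asymptotics from the Jordan block. The only addition is that you make explicit the nesting step $i(z)\ge\ipinf\Rightarrow T_c\big(i(z)\big)\subseteq T_c(\ipinf)$, which the paper uses implicitly.
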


\subsection{A small attractor}
\label{ch:comparison_theorem}

In view of the previous paragraph, convergence and non-negativeness follow if we can show that $i(z) \geq \oc$ for all $z \geq 0$. If we choose $a_0$ small enough, some rough bounds do the trick. We control the total mass of active particles via

\begin{lemma} \label{lem_Small_Attractor}
Fix $c>0, \, i_0 \in (\oc,1)$ and let $a_0 \in [0,(1-i_0)/2]$. Under the assumption that $i(s) \geq \oc$ is true for all $s \in [0,\ta]$, there exists a finite constant $L(c,i_0) \geq 0$, such that the following bound holds for all $ s \in [0,\ta]$:
\begin{align}
\int_0^s a(t) \, dt \leq \frac{c a_0-b(\ta)}{1-(i_0+a_0)} \leq L \cdot a_0.
\end{align}
\begin{proof} With the help of Theorem \ref{Triangles_3D_Thm}, we can use that $a(s), b(s) \in T_c(\oc)$ for all $ s \in [0,\ta]$. We integrate $  a(s)\cdot \big[1-i(s) \big] = a^2(s) - b'(s) - c b(s) $ and use that $b(0) = 0$:
\begin{align}
 \quad \int_0^\ta a(s) \cdot \big[  1 - i(s) \big] \, ds  = c a_0 - c a(\ta) - b(\ta) + \int_0^\ta a^2(s) \, ds.
\end{align}
By monotonicity: $1-i_0 \leq 1-i(s)$ and $0 \leq a(s)\leq a_0$. It follows that
\begin{align}
\begin{aligned}
 (1-i_0-a_0) \int_0^\ta a(s) \, ds & \leq ca_0 - ca(z) - b(z) \\
&\leq c a_0 -b(z) \\
\Leftrightarrow \int_0^\ta a(s) \, ds
& \leq \frac{c a_0-b(\ta)}{1-(i_0+a_0)}, \label{lem_ineq_conv}
\end{aligned}
\end{align}
where we need $a_0 + i_0 <1$ to avoid a blow-up, which is true by our choice of $a_0$. It remains to bound $-b(\ta) $. Take a look at the flow in the $(a,b)$-plane in Figure \ref{fig:inv_triangles}. It holds that $a(\ta),b(\ta)$ stay within $T_c(\oc)$. Within the triangle $T_c(\oc)$, it holds for the left inner angle $\gamma_l(\oc)$ that
\begin{align}
\tan \big ( \gamma_l(\oc) \big) \geq \frac{|b|}{|a|}.
\end{align}
Thus, also $-b(\ta) \leq \tan \big ( \gamma_l(\oc) \big) \cdot a(z) \leq L_2 \cdot a_0$.
\end{proof}
\end{lemma}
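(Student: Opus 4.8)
The plan is to control the total mass $\int_0^s a(t)\,dt$ directly from the second Wave Equation in \eqref{WAVE_EQ}, rewritten in the ODE form \eqref{3D_Flow_Def}. Specifically, I would start from the identity $a(s)\bigl[1-i(s)\bigr] = a(s)^2 - b'(s) - c\,b(s)$, which is just a rearrangement of $b' = a(a+i) - a - cb$, and integrate it over $[0,z]$. Using $b(0)=0$ and $a(0)=a_0$, the integral of the right-hand side telescopes to $c a_0 - c a(z) - b(z) + \int_0^z a^2(s)\,ds$. This is the cleanest way to isolate $\int_0^z a\,ds$, because on the left we can bound $1-i(s) \ge 1-i_0$ by the monotonicity of $i$ guaranteed for non-negative solutions, and on the right we can bound $a(s) \le a_0$ — both of these use Theorem~\ref{Triangles_3D_Thm}, which tells us $(a(s),b(s)) \in T_c(\oc)$ so in particular $a$ is non-negative and (being trapped in the triangle with $b \le 0$) decreasing, hence $a(s)\le a_0$.

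After these substitutions I get $(1-i_0-a_0)\int_0^z a(s)\,ds \le c a_0 - c a(z) - b(z) \le c a_0 - b(z)$, using $a(z)\ge 0$. The restriction $a_0 \le (1-i_0)/2$ enters here: it guarantees $1-i_0-a_0 \ge (1-i_0)/2 > 0$, so dividing is legitimate and produces no blow-up, giving the first inequality of the Lemma. For the second inequality I need a bound $-b(z) \le L_2 \cdot a_0$ with $L_2$ depending only on $c$ and $i_0$ (in fact only on $c$). Since $(a(z),b(z))$ lies in $T_c(\oc)$, which is a triangle with apex at the origin bounded below by the edge of slope $-\tan\gamma_l(\oc)$ and above by the segment on the $a$-axis, we have $|b(z)| \le \tan\bigl(\gamma_l(\oc)\bigr)\cdot a(z) \le \tan\bigl(\gamma_l(\oc)\bigr)\cdot a_0$. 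Combining, $\int_0^z a\,ds \le \frac{c + \tan\gamma_l(\oc)}{1-i_0-a_0}\,a_0 \le L(c,i_0)\,a_0$ with $L := \frac{2(c+\tan\gamma_l(\oc))}{1-i_0}$, which is finite because $i_0 < 1$.

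The main obstacle — if there is one — is making sure the geometric bound $|b| \le \tan(\gamma_l(\oc))\cdot a$ is actually valid on all of $T_c(\oc)$ and not just near the origin. One has to check that the triangle $T_c(\oc)$ genuinely lies in the cone $\{|b| \le \tan(\gamma_l(\oc))\,a\}$; this follows because the left edge through $(0,0)$ has the steepest downward slope among points of the triangle (the third edge lies on the $a$-axis and the right edge through $(1-\oc,0)$ has a shallower downward slope near the origin by the convexity of $T_c(\oc)$), so every point of the triangle satisfies $b/a \ge -\tan\gamma_l(\oc)$ while $b \le 0$ throughout. A secondary point to be careful about is that all estimates are conditional on the standing assumption "$i(s)\ge\oc$ for all $s\in[0,z]$", which is exactly what licenses the use of Theorem~\ref{Triangles_3D_Thm} on $[0,z]$; the Lemma is stated with this hypothesis, so no bootstrap is needed here — it will be supplied in the next subsection. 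Everything else is elementary: a single integration by parts–free integration, two monotonicity bounds, and one planar-geometry estimate.
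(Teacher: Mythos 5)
Your proposal is correct and follows essentially the same route as the paper's proof: the same integrated identity $a(1-i)=a^2-b'-cb$ with $b(0)=0$, the same monotonicity bounds $1-i(s)\ge 1-i_0$ and $0\le a(s)\le a_0$ from the trapping triangle, and the same geometric estimate $-b\le\tan\bigl(\gamma_l(\oc)\bigr)\,a$ to control the boundary term. Your only additions — the explicit constant $L$ and the verification that all of $T_c(\oc)$ lies in the cone $\{|b|\le\tan(\gamma_l(\oc))\,a\}$ — are consistent with, and slightly more detailed than, the paper's argument.
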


Since we can bound the total mass of active particles, we can also bound the change of $i(z)$:

\begin{proposition}[Small attractor of $S_{+\infty}$]
\label{Small_Attractor}
Fix $c>0$ and $i \in ( \oc, 1)$. There exists a positive constant $0 < M \ll 1$, such that for all $0 \leq a_0 \leq M$:
\begin{align}
i(\ta)  \geq  \oc \qquad  \forall \ta \geq 0.
\end{align}
Hence, also $ a(\ta),b(\ta) \in T_c(\oc)$ for all $z \geq 0$. The trajectory is non-negative and converges to $S_{+\infty}$ as $\ta \rightarrow +\infty$.
\begin{proof}
As long as $i(\ta) \geq \oc$, it must be that $a(\ta),b(\ta) \in T_c(\oc)$ by Theorem \ref{Triangles_3D_Thm}. Assume there exists finite phase-time $\tau := \inf_{z \geq 0} \{ i(z) < \oc \} $:
\begin{align}
\begin{aligned}
i(\tau) & = i_0 + \int_0^\tau i'(z) \, dz &&= i_0 - \frac{1}{c} \int_0^{\tau} a(s)\big[a(s)+i(s)+r \big] \, ds \\
&&& \geq i_0 - \frac{1}{c}  \int_0^{\tau} a(s)\big[1 +r \big] \, ds,
\end{aligned} \label{ineq:i(t)}
\end{align}
where we used $a(s) + i(s) \leq 1$. For $z \leq \tau$ and $a_0$ sufficiently small, we can apply Lemma \ref{lem_Small_Attractor}. This implies that there is a finite constant $L$, which does not depend on $a_0$, such that
\begin{align}
i(\tau) \geq i_0 - \frac{L}{c}(1+r) \cdot a_0.
\end{align}
The right-hand side is strictly larger than $\oc$ for sufficiently small $a_0$, say $a_0 \leq M$. Thus, there is no such phase-time $\tau$ for $a_0 \leq M$.
\end{proof}
\end{proposition}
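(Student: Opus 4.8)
The plan is a \emph{bootstrap} argument: the conclusion $i(\ta)\ge\oc$ is exactly the hypothesis under which Theorem~\ref{Triangles_3D_Thm} confines $(a(\ta),\apr(\ta))$ to the triangle $T_c(\oc)$, so I want to run that confinement \emph{as long as it is available}, use it to bound how quickly $i$ can decrease, and close the loop by choosing $a_0$ small. As preparation I would record the elementary consequences of lying in $T_c(\oc)$: there $a\ge 0$ and $\apr=a'\le 0$, so $a(\ta)$ is non-increasing and $a(\ta)\le a_0$; since moreover $i'\le 0$ and $a_0\le M$, $i_0<1$ give $a_0+i_0<1$, one gets $a(\ta)+i(\ta)\le a_0+i_0<1$ throughout. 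Arranging $M\le(1-i_0)/2$, Lemma~\ref{lem_Small_Attractor} becomes applicable and furnishes the key a priori estimate $\int_0^{\ta}a(s)\,ds\le L\cdot a_0$ with a constant $L=L(c,i_0)$ that does \emph{not} depend on $a_0$.

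Now suppose, for contradiction, that $\tau:=\inf\{\ta\ge 0:\ i(\ta)<\oc\}$ is finite. Since $i$ is continuous and monotone, $i\ge\oc$ on $[0,\tau]$ with $i(\tau)=\oc$, so $(a(\ta),\apr(\ta))\in T_c(\oc)$ on $[0,\tau]$ by Theorem~\ref{Triangles_3D_Thm} and the bounds above hold there. Integrating the third line of \eqref{3D_Flow_Def} and using $a+i\le 1$ together with Lemma~\ref{lem_Small_Attractor},
\[
i(\tau)=i_0-\frac{1}{c}\int_0^{\tau}a(s)\bigl(a(s)+i(s)+r\bigr)\,ds
\ \ge\ i_0-\frac{1+r}{c}\int_0^{\tau}a(s)\,ds
\ \ge\ i_0-\frac{(1+r)L}{c}\,a_0 .
\]
Because $i_0>\oc$ strictly, the right-hand side exceeds $\oc$ once $a_0$ is small enough, say $a_0\le M$; this contradicts $i(\tau)=\oc$. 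Hence no such $\tau$ exists, i.e.\ $i(\ta)\ge\oc$ for all $\ta\ge 0$. Given $i(\ta)\ge\oc$ on the whole half-line, Theorem~\ref{Triangles_3D_Thm} gives $a(\ta),\apr(\ta)\in T_c(\oc)$ for every $\ta\ge 0$, so in particular $a,i\ge 0$, and its Corollary then yields $(a,\apr,i)\to(0,0,\ipinf)$ with $\ipinf\in[\oc,1)$, which is convergence to $S_{+\infty}$.

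The only genuinely delicate point is the self-referential look of the estimate — $i\ge\oc$ is used to prove $i\ge\oc$ — and this is precisely what the first-exit-time $\tau$ resolves; one just has to make sure the constant $L$ from Lemma~\ref{lem_Small_Attractor} truly is independent of $a_0$ (it involves only $c$ and $i_0$, through $\tan\gamma_l(\oc)$ and the denominator $1-(i_0+a_0)\ge 1-i_0-M$), so that shrinking $a_0$ actually shrinks the loss $i_0-i(\tau)$ below the margin $i_0-\oc>0$. Everything else is bookkeeping with the already established invariant triangle.
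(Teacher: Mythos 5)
Your proof is correct and follows essentially the same route as the paper: the same first-exit-time contradiction at $\tau=\inf\{\ta: i(\ta)<\oc\}$, the same use of Theorem \ref{Triangles_3D_Thm} to keep $(a,b)$ in $T_c(\oc)$ up to $\tau$, and the same application of Lemma \ref{lem_Small_Attractor} to bound $\int_0^\tau a$ by $L\cdot a_0$ and force $i(\tau)>\oc$. Your explicit remarks that $M\le(1-i_0)/2$ is needed for the lemma and that $L$ depends only on $c,i_0$ are slightly more careful than the paper's wording but add nothing materially different.
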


\subsection{Extending the attractor}
\label{ch:extending_attractor}

The previous section ended with a condition of type $a_0 \ll 1$, under which the system stays non-negative and converges. However, given $a_0$ and $i_0$ and under the assumption that the system converges, we can explicitly calculate its limit $\ipinf$. For fixed $i_0$, we continuously deform the trajectory while increasing $a_0$ up to some upper bound $a^\ast(i_0)$, as sketched in Figure \ref{fig:a0_infty_extend}.

In what follows, we first assume that the system stays non-negative and converges, analyze its behavior under this assumption, and then verify that this must be true for certain initial data, which leads to Theorem \ref{a_0_I_limit}.

\begin{figure}[h]
\vspace{0.2cm}
 	\centering
 	\begin{minipage}[c]{0.5\textwidth}
  	\begin{picture}(100,100)
	\put(0,0){\includegraphics[width=0.8\textwidth]{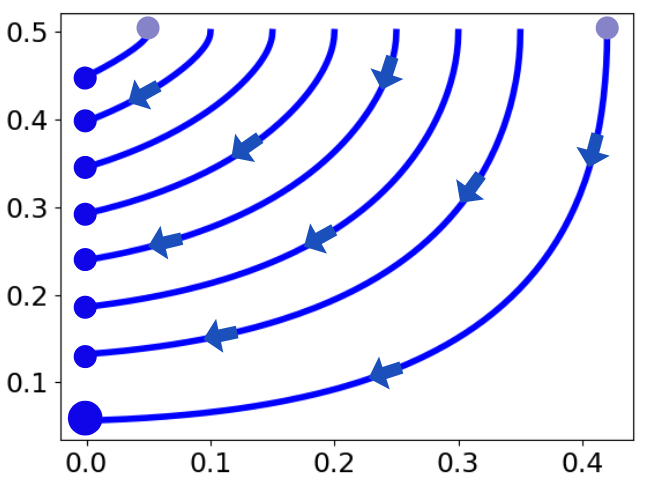}}
	\put(-10,65){\scalebox{1.8}{$i$}}
	\put(-55,8){$\ipinf = \oc = 0 $}
	\put(80,-8){\scalebox{1.8}{$a$}}
	\put(30,107){$a_0 = 0.05$}
	\put(110,107){$a_0 = a^\ast(i_0) \approx 0.42$}
	\end{picture}
	\end{minipage}
	\vspace{0.25cm}
	\caption{Trajectories of $a(\ta),i(\ta)$ of the Wave System \eqref{3D_Flow_Def} for $c = 2, r=0$. Initial values are $b(0) = 0, i_0=0.5$, and $a_0$ such that $a_0 \in [0, a^\ast(i_0) \approx 0.42]$. The upper bound $a^\ast$ is given in Def. \ref{A_ast_def}. Trajectories with such initial data converge and stay non-negative, since $i(z) \geq \oc$.}
	\label{fig:a0_infty_extend}
\end{figure}

We apply Proposition \ref{Prop_integrals} to interrelate the limit $(0,0,\ipinf)$ of the trajectory to its initial data $(a_0,0,i_0)$:
\begin{lemma} \label{I_+infty_formula_lemma}
If $i(z) \geq \oc $, such that the system stays non-negative and converges to $(0,0,\ipinf)$ as $\ta \rightarrow + \infty$, then $\ipinf$ can be written as a function of $a_0$ and $i_0$:
\begin{align}
\ipinf(a_0,i_0) = 1 - \sqrt{ (i_0 + a_0 -1)^2 + \frac{1+r}{c^2}( a_0^2 +2c^2a_0)
}. \label{I_+infty_formula}
\end{align}
The function $\ipinf(a_0,i_0)$ is decreasing in $a_0$, for $a_0 \in [0,1-i_0]$.
\end{lemma}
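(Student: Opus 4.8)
The plan is to apply Proposition \ref{Prop_integrals} on the interval $[z_1, z_2] = [0, +\infty)$, using $z_1 = 0$ with initial data $(a_0, 0, i_0)$ (so $b(z_1) = 0$ by hypothesis) and $z_2 = +\infty$ with limit $(0,0,\ipinf)$ (so $b(z_2) = 0$ and $a(z_2) = 0$ by the assumed convergence). The integrability hypotheses of the Proposition are supplied by Lemma \ref{Lem_global_integrability} once we know the trajectory is non-negative, bounded and converges, which is exactly what is assumed. Write $\mathscr{A} := \mathscr{A}(+\infty) = \int_0^\infty a(z)\,dz \geq 0$. Then \eqref{second_mass} reads $i_0 - \ipinf = \frac{1+r}{c}\mathscr{A}$, and \eqref{third_mass} (with $a(z_2) = 0$, $a(z_1) = a_0$) reads $\int_0^\infty a(a+i) = \ipinf \cdot \mathscr{A} + \frac{1+r}{2c}\mathscr{A}^2 + \frac{a_0^2}{2c}$. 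I also need \eqref{first_mass}, which gives $\int_0^\infty a(a+i) = \mathscr{A} + c(0 - a_0) = \mathscr{A} - c a_0$.

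Equating the two expressions for $\int_0^\infty a(a+i)$ yields $\mathscr{A} - c a_0 = \ipinf \mathscr{A} + \frac{1+r}{2c}\mathscr{A}^2 + \frac{a_0^2}{2c}$, i.e. $(1-\ipinf)\mathscr{A} = c a_0 + \frac{1+r}{2c}\mathscr{A}^2 + \frac{a_0^2}{2c}$. Now substitute $\mathscr{A} = \frac{c}{1+r}(i_0 - \ipinf)$ from \eqref{second_mass}. This turns the relation into a single equation in $\ipinf$ (with parameters $a_0, i_0, c, r$). After clearing denominators it should become a quadratic in $\ipinf$; solving it and selecting the root consistent with $\ipinf \leq i_0$ and $\mathscr{A} \geq 0$ should produce exactly \eqref{I_+infty_formula}, namely $\ipinf = 1 - \sqrt{(i_0 + a_0 - 1)^2 + \frac{1+r}{c^2}(a_0^2 + 2c^2 a_0)}$. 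The main bookkeeping obstacle is the algebra of the quadratic and confirming that the expression under the square root is precisely what appears in the statement; I expect the terms $(i_0+a_0-1)^2$ and the $a_0^2$ and $2a_0$ contributions to fall out after completing the square in $\ipinf - 1$, and the sign of the square root is pinned down by $1 - \ipinf = (1-i_0) + \text{(nonnegative)} > 0$.

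For the monotonicity claim, it suffices to show the quantity under the square root, call it $Q(a_0) := (i_0 + a_0 - 1)^2 + \frac{1+r}{c^2}(a_0^2 + 2c^2 a_0)$, is increasing in $a_0$ on $[0, 1-i_0]$, since then $\ipinf(a_0, i_0) = 1 - \sqrt{Q(a_0)}$ is decreasing. Differentiating, $Q'(a_0) = 2(i_0 + a_0 - 1) + \frac{1+r}{c^2}(2a_0 + 2c^2) = 2(i_0 + a_0 - 1) + \frac{2(1+r)}{c^2}a_0 + 2(1+r)$. On the range $a_0 \in [0, 1-i_0]$ we have $i_0 + a_0 - 1 \leq 0$ but it is bounded below by $i_0 - 1 \geq -1$ (using $i_0 \geq \oc \geq 0$, so $i_0 - 1 \in [-1, 0)$), while $2(1+r) \geq 2$ dominates, so $Q'(a_0) \geq 2(i_0 - 1) + 2(1+r) = 2i_0 + 2r \geq 0$, with the remaining nonnegative terms only helping. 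Hence $Q$ is nondecreasing and $\ipinf(\cdot, i_0)$ is decreasing, as claimed. The only subtlety worth double-checking is that $Q(a_0) > 0$ throughout (so the square root is differentiable), which holds because $\frac{1+r}{c^2}\cdot 2c^2 a_0 = 2(1+r)a_0 > 0$ for $a_0 > 0$, and at $a_0 = 0$ we get $Q(0) = (i_0-1)^2 > 0$ since $i_0 < 1$.
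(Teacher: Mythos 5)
Your overall strategy is the same as the paper's (apply Proposition \ref{Prop_integrals} on $[0,+\infty)$, solve the resulting system for $\ipinf$, pick the root below $1$, then differentiate under the square root for monotonicity), and your monotonicity argument is correct as written. However, the algebraic step you deferred (``solving it \dots should produce exactly \eqref{I_+infty_formula}'') does not close the way you set it up. The identity \eqref{second_mass} in the form $i(z_1)-i(z_2)=\frac{1+r}{c}\mathscr{A}(z_2)$ is only valid when $a(z_1)=a(z_2)$: integrating $ci'=-a(a+i)-ra$ gives $c\,[\,i(z_1)-i(z_2)\,]=\int a(a+i)+r\mathscr{A}(z_2)$, and substituting \eqref{first_mass} leaves the boundary term, namely
\begin{align}
i(z_1)-i(z_2)=\frac{1+r}{c}\,\mathscr{A}(z_2)+a(z_2)-a(z_1).
\end{align}
In Corollary \ref{nec_cond_thm} both endpoints have $a=0$ so this term drops, but here $a(z_1)=a_0>0=a(z_2)$, and the correct relation is $\mathscr{A}=\frac{c}{1+r}(i_0+a_0-\ipinf)$, not $\mathscr{A}=\frac{c}{1+r}(i_0-\ipinf)$.

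If you carry out your substitution $\mathscr{A}=\frac{c}{1+r}(i_0-\ipinf)$ into $(1-\ipinf)\mathscr{A}=ca_0+\frac{1+r}{2c}\mathscr{A}^2+\frac{a_0^2}{2c}$, the quadratic resolves to $1-\ipinf=\sqrt{(1-i_0)^2+\frac{1+r}{c^2}(a_0^2+2c^2a_0)}$, which differs from \eqref{I_+infty_formula} by $-2a_0(1-i_0)+a_0^2$ under the root, so your proof as written establishes a different formula. With the corrected substitution, setting $x=1-\ipinf$ and $\gamma=1-i_0-a_0$ one gets $x(x-\gamma)=(1+r)a_0+\tfrac12(x-\gamma)^2+\frac{(1+r)a_0^2}{2c^2}$, hence $x^2=\gamma^2+\frac{1+r}{c^2}(a_0^2+2c^2a_0)$, which is exactly \eqref{I_+infty_formula}; this version is also the one consistent with the expression for $\alpha(i_0)$ in \eqref{a0_I+infty_formula}. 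So the gap is concrete but repairable: you must either re-derive the mass identity directly from the wave equations keeping the $-a_0$ boundary term, or flag that \eqref{second_mass} needs the extra term $a(z_2)-a(z_1)$ when the $a$-values at the endpoints differ.
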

\begin{proof}
We apply Proposition \ref{Prop_integrals}, and solve the resulting system for $\ipinf$. In the present case, since $a_0 \neq 0$, this results in a quadratic equation with two possible solutions. By positiveness of $a(\ta)$ and $i(\ta)$, $i(\ta)$ is decreasing, so it must be that $\ipinf < 1$, which uniquely determines \eqref{I_+infty_formula}. It can easily be verified that $\frac{d}{d \, a_0} \ipinf(a_0,i_0) \leq 0$. 
\end{proof}

We look for values of $a_0$ that ensure $i_{+\infty} \geq \oc$. Thus, we rearrange \eqref{I_+infty_formula} for $a_0$, set $\ipinf = \oc$, and choose the only possible positive solution of the resulting quadratic equation:
\begin{lemma}
Given $i_0$ and under the assumption that $\ipinf=\oc$, the value of $a_0$ is uniquely determined by
\begin{align}
\begin{aligned}
	\alpha(i_0) &:= \frac{c^2}{1+c^2+r}
	\Bigg\{ -(i_0 +r) \label{a0_I+infty_formula} \\
	& \quad + \sqrt{ (i_0 +r)^2 + \frac{c^2+1+r}{c^2}
	\Big ( (1-\oc)^2 - (1-i_0)^2 \Big )	} \Bigg\}.
\end{aligned}
\end{align}
\end{lemma}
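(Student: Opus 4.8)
The plan is to invert the closed-form expression \eqref{I_+infty_formula} from Lemma \ref{I_+infty_formula_lemma}. Under the standing assumption the limit is $\ipinf(a_0,i_0) = 1 - \sqrt{(i_0+a_0-1)^2 + \tfrac{1+r}{c^2}(a_0^2 + 2c^2 a_0)}$, so imposing $\ipinf = \oc$ amounts to $1-\oc = \sqrt{(i_0+a_0-1)^2 + \tfrac{1+r}{c^2}(a_0^2 + 2c^2 a_0)}$. Since $\oc = \max\{0,1-c^2/4\} < 1$ for every $c>0$, the left-hand side is strictly positive and the right-hand side is non-negative, so squaring is a genuine equivalence: a non-negative $a_0$ solves the squared equation if and only if $\ipinf(a_0,i_0)=\oc$. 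Thus the lemma reduces to solving a single polynomial equation.

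Next I would square, expand $(i_0+a_0-1)^2 = a_0^2 - 2a_0(1-i_0) + (1-i_0)^2$, and note $\tfrac{1+r}{c^2}\cdot 2c^2 a_0 = 2(1+r)a_0$, and then collect powers of $a_0$. The constant terms combine into $(1-\oc)^2 - (1-i_0)^2$; the coefficient of $a_0^2$ becomes $1 + \tfrac{1+r}{c^2} = \tfrac{c^2+1+r}{c^2}$; and the linear coefficient is $-2(1-i_0) + 2(1+r) = 2(i_0+r)$. This yields the quadratic
\begin{align}
\frac{c^2+1+r}{c^2}\, a_0^2 + 2(i_0+r)\, a_0 - \big[(1-\oc)^2 - (1-i_0)^2\big] = 0,
\end{align}
and the quadratic formula (factoring $\tfrac{c^2}{(c^2+1+r)^2}$ out of the discriminant) produces exactly the two candidates $\tfrac{c^2}{c^2+1+r}\big\{-(i_0+r) \pm \sqrt{(i_0+r)^2 + \tfrac{c^2+1+r}{c^2}((1-\oc)^2 - (1-i_0)^2)}\big\}$.

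To single out $\alpha(i_0)$, I would use that $i_0 \in [\oc,1)$ gives $1-\oc \ge 1-i_0 > 0$, hence $(1-\oc)^2-(1-i_0)^2 \ge 0$; therefore the radicand is at least $(i_0+r)^2$ and its square root is at least $i_0+r \ge 0$. Consequently the root taken with the minus sign is $\le 0$, while the plus-sign root is $\ge 0$, so it is the unique admissible value of $a_0$ — which is precisely \eqref{a0_I+infty_formula}. There is no real obstacle in this argument: the entire content is the reversible squaring step together with careful bookkeeping of the coefficients; the only points requiring attention are verifying $1-\oc>0$ so that squaring is an equivalence, and checking the signs of the two roots so that the positive one is correctly identified.
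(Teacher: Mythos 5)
Your proposal is correct and follows exactly the route the paper indicates (the paper states the procedure — rearrange \eqref{I_+infty_formula} for $a_0$, set $\ipinf=\oc$, and take the unique positive root of the resulting quadratic — and omits the computation). Your algebra, the justification that squaring is reversible since $1-\oc>0$, and the sign analysis selecting the plus-sign root all check out.
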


Equation \eqref{a0_I+infty_formula} can be restated as $\ipinf \big ( \alpha(i_0), i_0 \big ) = \oc$, but keep in mind that still have to prove convergence. It can easily be seen that $\alpha(\oc) = 0$. Since we require that $a_0 \in [0, 1 -i_0]$, such that $a_0 \in T_c(i_0)$, this leads to our

\begin{definition}[Upper bound for $a_0$] \label{A_ast_def}
For fixed $c>0$ and, we define
\begin{align}
a^\ast(i_0) := \min \Big\{ \alpha(i_0), 1-i_0 \Big\}, \quad \text{for } i_0 \in [\oc,1).
\end{align}
\end{definition}
This will hold as sharp upper bound for $a_0$, such that the trajectory stays non-negative and converges. Before we state the corresponding theorem, we perform a last check that we are in the correct setup:
\begin{lemma} \label{A_ast_lemma}
Let $i_0 \in [\oc,1)$ and $a_0 \in [0, a^\ast(i_0)]$. If the system stays non-negative and converges to $(0,0,\ipinf)$, then
\begin{align}
\ipinf(a_0,i_0) \, \in \,  [\oc, i_0],
\end{align}
where $\ipinf(a_0,i_0)$ is given as in Lemma \ref{I_+infty_formula_lemma}.
\begin{proof}
It holds that $\ipinf(0,i_0) = i_0$. The claim follows since $\ipinf(a_0,i_0)$ is decreasing in $a_0$ and since $a^\ast(i_0) \leq \alpha(i_0)$, where $\ipinf \big ( \alpha (i_0), i_0 \big ) = \oc$.
\end{proof}
\end{lemma}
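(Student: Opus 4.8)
The plan is to obtain both inequalities by inspecting the closed form for $\ipinf(a_0,i_0)$ recorded in Lemma \ref{I_+infty_formula_lemma} together with its monotonicity in $a_0$. First I would evaluate the expression \eqref{I_+infty_formula} at the left endpoint $a_0 = 0$: this kills the term $\tfrac{1+r}{c^2}(a_0^2 + 2c^2 a_0)$ and leaves $\ipinf(0,i_0) = 1 - \sqrt{(i_0-1)^2} = 1 - (1-i_0) = i_0$, using $i_0 < 1$. This pins down the right end of the target interval $[\oc, i_0]$, and is exactly the opening observation of the argument.

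For the upper bound I would note that $a^\ast(i_0) = \min\{\alpha(i_0), 1-i_0\} \le 1-i_0$, so the admissible range $[0, a^\ast(i_0)]$ sits inside $[0, 1-i_0]$, on which Lemma \ref{I_+infty_formula_lemma} gives that $a_0 \mapsto \ipinf(a_0,i_0)$ is non-increasing; hence $\ipinf(a_0,i_0) \le \ipinf(0,i_0) = i_0$. For the lower bound I would use that $\alpha(i_0)$ is, by its very construction, the value for which \eqref{a0_I+infty_formula} rearranges \eqref{I_+infty_formula} into $\ipinf(\alpha(i_0), i_0) = \oc$; combining this with $a^\ast(i_0) \le \alpha(i_0)$ and monotonicity yields $\ipinf(a_0,i_0) \ge \ipinf(a^\ast(i_0), i_0) \ge \ipinf(\alpha(i_0), i_0) = \oc$. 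Chaining the two estimates gives $\ipinf(a_0,i_0) \in [\oc, i_0]$, which is the claim.

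The only place that needs a moment's care — and the step I expect to be the one mild obstacle — is that Lemma \ref{I_+infty_formula_lemma} only asserts monotonicity on $[0, 1-i_0]$, while the comparison point $\alpha(i_0)$ may exceed $1-i_0$ (exactly the case in which $a^\ast(i_0) = 1-i_0$). To bridge that gap I would either extend the monotonicity statement to all $a_0 \ge 0$ — the radicand in \eqref{I_+infty_formula} has derivative $2(i_0 + a_0 - 1) + \tfrac{2(1+r)}{c^2}a_0 + 2(1+r)$, which is nonnegative at $a_0 = 0$ and increasing, so the radicand is nondecreasing and $\ipinf$ nonincreasing throughout — or simply verify the single value $\ipinf(1-i_0,i_0) \ge \oc$ directly from the formula. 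Either route is routine bookkeeping, and with it the two-sided bound follows immediately.
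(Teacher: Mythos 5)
Your proposal is correct and follows essentially the same route as the paper: evaluate $\ipinf(0,i_0)=i_0$, then use monotonicity in $a_0$ together with $a^\ast(i_0)\le\alpha(i_0)$ and $\ipinf(\alpha(i_0),i_0)=\oc$. Your extra observation — that Lemma \ref{I_+infty_formula_lemma} only states monotonicity on $[0,1-i_0]$ while $\alpha(i_0)$ may exceed $1-i_0$, and that the radicand's derivative $2(i_0+a_0-1)+\tfrac{2(1+r)}{c^2}a_0+2(1+r)$ is nonnegative at $a_0=0$ (it equals $2(i_0+r)\ge 0$) and increasing, so monotonicity in fact holds for all $a_0\ge 0$ — correctly closes a small gap that the paper's one-line proof glosses over.
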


Finally, we can remove the training wheels and get rid of the assumption that the system converges and stays non-negative. We state

\begin{theorem} [Attractor of $S_{+\infty}$] \label{a_0_I_limit}
For $r\geq 0, c>0$, let $ i_0 \in [\oc,1)$ and $ a_0 \in \big [0, a^\ast(i_0) \big ]$. Let $a(\ta),b(\ta),i(\ta)$ be the solution of Eq. \eqref{3D_Flow_Def} with initial data $(a_0,0,i_0)$.

It then holds that $a(\ta),i(\ta) \geq 0$ and $i'(\ta),b(\ta) \leq 0 $ for all $\ta \geq 0$. As $\ta \rightarrow +\infty$, $a(\ta)$ and $b(\ta)$ converge to $0$, and $i(\ta)$ converges to
\begin{align}
\ipinf(a_0,i_0) = 1 - \sqrt{ (i_0 + a_0 -1)^2 + \frac{1+r}{c^2}( a_0^2 +2c^2a_0)
} \quad \in \, [\oc,1). \label{eq:attr_big_thm}
\end{align}
The type of convergence is given in Proposition \ref{Prop:Tail_exp_rate_integrable}.
\begin{proof}
\textbf{Notation}: We fix $r,c,i_0 \in (\oc,1)$ and change only $a_0$. If $i_0=\oc$, we must choose $a_0 =0$. For visual clarity, $\Phi_\ta(x)$ is the state of the system at phase-time $\ta$, starting in ${x=(a,b,i)}$. If the limit of a trajectory exists, we denote
\begin{align}
\Phi_{+\infty}(a_0,0,i_0) := \lim_{\ta \rightarrow +\infty} \Phi_\ta ( a_0,0,i_0 ) = (0,0,\ipinf ).
\end{align}

\textbf{Step 1: starting interval}\\
For $a_0$ positive but small enough, Proposition \ref{Small_Attractor} grants that for all $\ta \geq 0$:
\begin{align}
i(\ta) \geq \oc \quad \text{ and } \quad  a(\ta),b(\ta)  \in T_c(\oc),
\end{align}
where $T_c(\oc)$ is a bounded invariant region that contains only points such that $a \geq 0, b \leq 0 $. Thus, $ a(\ta),i(\ta)  \rightarrow (0, \ipinf)$ monotone. With the help of Lemma \ref{I_+infty_formula_lemma}, we can explicitly calculate $\ipinf$ as stated, and Eq. \ref{eq:attr_big_thm} holds on some small non-empty interval $a_0 \in [0,a_u)$.\\

\textbf{Step 2: neighborhood of existing trajectories}\\
Pick some $a_0<a^\ast(i_0)$ for which the statement is already proven. By choice of $a_0$, it holds that $\ipinf > \oc$. Thus, the limit $\Phi_{+\infty}(a_0,0,i_0)$ is Lyapunov stable by our previous analysis of the asymptotics, see Theorem \ref{Sink_Theorem}: for every $\epsilon_\infty  >0$, there exists a $\delta_\infty >0$, such that
\begin{align}
||x-\Phi_{+\infty}(a_0,0,i_0)|| < \delta_\infty \Rightarrow ||\Phi_\ta(x) - \Phi_{+\infty}(a_0,0,i_0)|| < \epsilon_\infty
\end{align}
for all $\ta \in [0, \infty)$. Choose $ \epsilon_\infty \leq \ipinf(a_0,i_0) -\oc $ and $0 < \delta_\infty \leq \epsilon_\infty$. This grants $i(\ta) \geq \oc$ after entering the $\delta_\infty$-neighborhood. Within this attractor, also $a(\ta) \geq 0$ in view of Theorem \ref{Triangles_3D_Thm}, since $i(\ta) \geq \oc $.

Starting in $(a_0,0,i_0)$, we follow the trajectory up to some finite time $\tau$, where it has entered the $\delta_\infty$-neighborhood:
\begin{align} \label{Eq:proof_big_attr1}
||\Phi_{\tau}(a_0,0,i_0) - \Phi_{+\infty}(a_0,0,i_0)|| \leq \frac{\delta_\infty}{2}.
\end{align}
The derivative of the system is locally Lipschitz continuous, and so trajectories $\Phi_\ta(x_0)$ are uniformly continuous in initial data $x_0$ on finite time intervals. This is a classic result, we refer to Chapter 2 of the textbook of Hsieh and Yasutaka \cite{Hsieh_Sibuya_Basic_Theory_ODEs}. There exists some $\delta_0 >0$, s.t. for all $x \in \mathds{R}^3$ with $||x -  (a_0, 0, i_0) || < \delta_0$:
\begin{align}
||\Phi_\ta(x) - \Phi_\ta  (a_0, 0, i_0)  ||_{[0, \tau]} < \frac{\delta_\infty}{2}.
\end{align}
This implies for all such trajectories $\Phi_\ta$:
\begin{align}
& 1) \text{ }  ||\Phi_\tau(x) - \Phi_{+\infty}(a_0,0,i_0) || < \delta_\infty, \text{ and} \\
&  2) \text{ } i(\ta) \geq \oc \qquad \forall \ta \in [0,\tau].
\intertext{In particular, $\Phi_\tau(x)$ lies within the $\delta_\infty$-neighborhood. We conclude that for all initial data $x \in \mathds{R}^3, ||x -  (a_0, 0, i_0) || < \delta_0$:}
& 3) \text{ } i(\ta) \geq \oc \qquad \forall \ta \geq 0.
\end{align}
Again, Theorem \ref{Triangles_3D_Thm} implies $ a(\ta),i(\ta)  \in T_c(\oc)$ for all $\ta \geq 0$. As before, the system is integrable and converges as $\ta \rightarrow + \infty$, so we can explicitly calculate $\ipinf(a_0,i_0)$. Thus, our claim holds for all starting points $(a_1,0,i_0)$, where $a_1$ lies in a small open interval around $a_0$.\\

\textbf{Step 3: limits of trajectories}\\
Assume that the claim holds for all $a_0 \in [0,a_u)$. For all trajectories starting in $(a_0,0,i_0)$, where $a_0 \in [0, a_u)$, it holds that $i(\ta)$ is monotone and bounded from below by $\oc$, such that $a(\ta),b(\ta) $ stay within $T_c(\oc)$. These trajectories are uniformly continuous in initial data on finite time-intervals. Fix any finite time-horizon $[0,T]$. The trajectories $\Phi_\ta  (a_0, 0, i_0) $ form a Cauchy-sequence on $||.||_{[0,T]}$ as $a_0 \rightarrow a_u$. Since $T$ can be chosen arbitrarily large and since all trajectories converge towards $\Phi_{+\infty}(a_0,0,i_0)$, which continuous in $a_0$, the claim also holds for the limiting trajectory that starts in $a_u$.\\

\textbf{Step 4: conclusion}\\
By step 1, the claim holds for $a_0$ in some small interval $[0, a_u)$. By step 3, it then also holds for $a_0=a_u$. If now $a_u<a^\ast$, the claim holds for $a_0 \in [0, a_u + \epsilon)$ by step 2 for some $\epsilon >0$. Iterating these two steps, the claim ultimately holds for all $a_0 \in \big [ 0, a^\ast(i_0) \big ] $. In particular, we have proven that the trajectories $\Phi_{z}(a_0,0,i_0)$ are uniformly continuous with respect to initial data on $z \in [0, + \infty]$. This continuity allows us to finish the proof of Proposition \ref{Prop:Tail_exp_rate_integrable}. In the non-critical cases where $c^2/4 + \ipinf -1 > 0$, the trajectories converge along a stable manifold with rate of convergence $-c/2 + \sqrt{c^2/4 + \ipinf -1}$. As $c^2/4 + \ipinf -1 \rightarrow 0$, the critical trajectory must converge along the limit of these manifolds.
\end{proof}
\end{theorem}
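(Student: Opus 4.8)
The plan is a parametric continuation argument: fix $c>0$, $r\geq 0$ and $i_0\in[\oc,1)$, and vary $a_0\in[0,a^\ast(i_0)]$. Everything hinges on a single inequality, $i(z)\geq\oc$ for all $z\geq0$. Indeed, once this holds, Theorem \ref{Triangles_3D_Thm} places $(a(z),b(z))$ in the bounded invariant region $T_c(\oc)$, so $a(z)\geq0$ and $b(z)\leq0$; then $c\,i'(z)=-a(z)\bigl(a(z)+i(z)+r\bigr)\leq0$, so $i$ is decreasing, and together with boundedness the whole trajectory converges to a fixed point $(0,0,\ipinf)$ with $\ipinf\geq\oc\geq0$, whence $i(z)\geq\ipinf\geq0$ as well. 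Applying Proposition \ref{Prop_integrals} between $z_1=0$ (where $b=0$) and $z_2=+\infty$ (where $a=b=0$) — this is exactly Lemma \ref{I_+infty_formula_lemma} — pins down $\ipinf=\ipinf(a_0,i_0)$ by the stated formula, and Lemma \ref{A_ast_lemma} (using that $\ipinf(\cdot,i_0)$ is decreasing, $\ipinf(0,i_0)=i_0$, $\ipinf(\alpha(i_0),i_0)=\oc$) gives $\ipinf(a_0,i_0)\in[\oc,i_0]\subseteq[\oc,1)$. So I would set $G:=\{a_0\in[0,a^\ast(i_0)] : i(z)\geq\oc \text{ for all }z\geq0\}$ and prove $G=[0,a^\ast(i_0)]$.

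Nonemptiness of $G$ is Proposition \ref{Small_Attractor}: for $0\leq a_0\leq M$ with $M\ll1$, the crude mass estimate of Lemma \ref{lem_Small_Attractor} keeps $i$ above $\oc$. For closedness, I would suppose $[0,a_u)\subseteq G$; for each such $a_0$ the trajectory lies in the bounded set $T_c(\oc)$ in the $(a,b)$-plane with $i$ monotone and $\geq\oc$, so by uniform continuity of solutions with respect to initial data on finite time intervals — a classical fact — the trajectories form a Cauchy family in $\|\cdot\|_{[0,T]}$ as $a_0\uparrow a_u$, for every finite $T$, and the uniform limit is the trajectory started at $a_u$. Hence that trajectory also stays in $T_c(\oc)$ with $i\geq\oc$, so $a_u\in G$; since $T$ is arbitrary and $\ipinf(\cdot,i_0)$ is continuous, its limit is $\ipinf(a_u,i_0)$. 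This step also covers the right endpoint $a^\ast(i_0)$.

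Openness is the crux. Take $a_0\in G$ with $a_0<a^\ast(i_0)\leq\alpha(i_0)$. Since $\ipinf(\cdot,i_0)$ is strictly decreasing with $\ipinf(\alpha(i_0),i_0)=\oc$, the limit satisfies $\ipinf(a_0,i_0)>\oc$, and because $i$ is decreasing along this trajectory there is a uniform margin $i(z)\geq\ipinf(a_0,i_0)>\oc$ for all $z\geq0$. In particular $\ipinf(a_0,i_0)>c^2/4-1$, so Theorem \ref{Sink_Theorem} makes $(0,0,\ipinf)$ Lyapunov stable: there is a forward-invariant neighborhood of it on which $i$ stays within any prescribed tolerance of $\ipinf$. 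I would fix $\epsilon_\infty<\ipinf(a_0,i_0)-\oc$ with associated $\delta_\infty$; the reference trajectory enters the $\delta_\infty$-ball around $(0,0,\ipinf)$ at some finite time $\tau$. By uniform continuity in initial data on the compact interval $[0,\tau]$, every trajectory started close enough to $(a_0,0,i_0)$ stays $\delta_\infty/2$-close to the reference on $[0,\tau]$, hence keeps $i\geq\oc$ there (via the margin) and lands inside the $\delta_\infty$-ball at time $\tau$; forward invariance then forces $i(z)\geq\oc$ for all $z\geq0$. So an interval of $a_0$'s around the given value lies in $G$. Combining nonemptiness, this openness statement for $a_0<a^\ast(i_0)$, and closedness in the standard continuation manner (repeatedly extend $G$ up to the supremum of the current good interval by closedness, then past it by openness while below $a^\ast(i_0)$) yields $G=[0,a^\ast(i_0)]$, and with it all assertions of the theorem.

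The main obstacle I anticipate is the degenerate limiting fixed point at $a_0=\alpha(i_0)$ (when $\alpha(i_0)\leq1-i_0$): there $\ipinf=\oc$, the two nonzero eigenvalues \eqref{Eigenvalues_DF} collide at $-c/2$ with a single eigenvector, and the center-manifold analysis of Section \ref{ch:fix_point_results} no longer yields Lyapunov stability, so the openness argument is simply unavailable at that endpoint. Routing everything through a continuation argument is precisely what sidesteps this: the endpoint is reached only by the closedness step, which needs nothing about the stability type of the degenerate fixed point. As a bonus, the uniform continuity of $z\mapsto\Phi_z(a_0,0,i_0)$ now established on the full half-line $z\in[0,+\infty]$ lets one pass to the limit $c^2/4+\ipinf-1\to0^+$ as $a_0\to a^\ast(i_0)$: the non-degenerate stable manifolds, along which the approach is of order $e^{(-c/2+\sqrt{c^2/4+\ipinf-1})z}$, degenerate onto the generalized-eigenvector direction, producing the sub-exponential order $z\,e^{-cz/2}$ and completing the proof of Proposition \ref{Prop:Tail_exp_rate_integrable}. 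A secondary care point is that continuous dependence on initial data is uniform only on bounded time intervals, which is exactly why the openness step must first extract a forward-invariant neighborhood of the (non-degenerate) limit from Lyapunov stability and only afterwards dock nearby trajectories into it in finite time.
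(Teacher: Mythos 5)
Your proposal is correct and follows essentially the same route as the paper: nonemptiness via Proposition \ref{Small_Attractor}, openness via Lyapunov stability of the non-degenerate limit (Theorem \ref{Sink_Theorem}) combined with continuous dependence on initial data over the finite docking time, closedness via Cauchy limits of trajectories on finite horizons, and the same treatment of the degenerate endpoint $\ipinf=\oc$ and of the sub-exponential rate in Proposition \ref{Prop:Tail_exp_rate_integrable}. Phrasing it as an open-closed argument on the set $G$ is only a cosmetic repackaging of the paper's iterated Steps 1--4.
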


\section{The complete trajectory} \label{ch:Connection}

We follow the unstable manifold of points in $S_{-\infty}$, see \eqref{eq_unstable_set}, and show that it stays positive and enters the attractor of $S_{+\infty}$ from the previous section, cf. Theorem \ref{a_0_I_limit}.\\

\textbf{Assumption}: We will use the following setup over the entire Section \ref{ch:Connection}: Let $\iminf > 1$ and let $a(z), b(z), i(z)$ be the unique solution of the ODE-System \eqref{3D_Flow_Def} that emerges from $(0,0,\iminf)$ as $z \rightarrow - \infty$, such that $a(\ta) > 0$ asymptotically as $\ta \rightarrow - \infty$.\\

For all $\iminf > 1$, existence and uniqueness of these trajectories have been proven in Section \ref{ch:fix_point_results}. Moreover, we know their asymptotic behavior:

\begin{lemma} \label{Lem:neg_inf_tail}
The following holds as $\ta \rightarrow - \infty$:
\begin{equation}
\begin{aligned}
a(\ta) &>0,\\ b(\ta) &>0, \\ (a+i)' &< 0.
\end{aligned}
\end{equation}
\begin{proof}
The first two inequalities are given by Theorem \ref{Source_Theorem}, which also yields $b'(\ta) >0$ asymptotically. Noticing that $c(a+i)' = -a(1+r)-b' <0$ completes the proof.
\end{proof}
\end{lemma}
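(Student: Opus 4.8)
The plan is to obtain the statement almost entirely from the asymptotic description of the unstable manifold in Theorem~\ref{Source_Theorem}, together with one short rearrangement of the vector field~\eqref{3D_Flow_Def}.

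First I would invoke Theorem~\ref{Source_Theorem} directly. Under the standing assumption of this section, $a,b,i$ is precisely the branch of the one-dimensional unstable manifold of $(0,0,\iminf)$ lying in $\{a\geq 0\}$, and Theorem~\ref{Source_Theorem} identifies this branch as the unique orbit leaving the fixed point with $a(\ta),i(\ta)>0$ as $\ta\rightarrow-\infty$, along which moreover $b(\ta)>0$, $b'(\ta)>0$ and $i'(\ta)<0$ asymptotically. Hence there is a threshold $Z$ such that $a(\ta)>0$, $b(\ta)>0$ and $b'(\ta)>0$ for every $\ta<Z$; the first two claimed inequalities are exactly the first two of these.

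For the third inequality I would compute $(a+i)'$ from~\eqref{3D_Flow_Def}. Using $a'=b$ and $c\,i'=-a(a+i+r)$ gives $c\,(a+i)'=cb-a(a+i+r)$, whose sign is not yet obvious because $a(a+i)$ is positive. The decisive step is to eliminate this term with the $b$-equation: from $b'=a(a+i)-a-cb$ we have $cb=a(a+i)-a-b'$, and substituting yields
\[
c\,(a+i)' \;=\; -a-ar-b' \;=\; -a(1+r)-b'.
\]
For $\ta<Z$ the right-hand side is strictly negative, since $c>0$, $r\geq 0$, and $a(\ta),b'(\ta)>0$ by the previous paragraph; therefore $(a+i)'(\ta)<0$ for all such $\ta$, which is the assertion.

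I do not expect any real obstacle: the lemma is essentially a corollary of Theorem~\ref{Source_Theorem}. The only point that needs a moment's thought is spotting the correct linear combination — using the $b'$-equation to cancel the sign-indefinite term $a(a+i)$ in $c\,(a+i)'$ — after which monotonicity of $a+i$ near $-\infty$ is immediate. This monotonicity is exactly what one wants in the sequel, since near $-\infty$ one has $a+i\leq\iminf$, and knowing that $a+i$ decreases there will be the handle for controlling the trajectory once it has left the unstable manifold.
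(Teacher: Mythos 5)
Your proposal is correct and follows exactly the paper's argument: cite Theorem \ref{Source_Theorem} for $a>0$, $b>0$ and $b'>0$ near $-\infty$, then use the identity $c(a+i)'=-a(1+r)-b'$ obtained by eliminating $a(a+i)$ via the $b'$-equation. The algebra checks out and nothing is missing.
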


\subsection{The maximum of active particles}

For connecting these trajectories with $S_{+\infty}$, we need
\begin{proposition}[The maximum of active particles] \label{a0_reached}
There exists a finite phase-time $z_0$, such that $b(z_0) = 0$ for the first time.
\end{proposition}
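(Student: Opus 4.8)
I would prove this by contradiction, exploiting the two ``mass'' identities behind Proposition \ref{Prop_integrals} on a half-line together with a barrier for $a+i$.

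Since $\apr(\ta)>0$ as $\ta\to-\infty$ by Lemma \ref{Lem:neg_inf_tail} and $\apr$ is continuous, it is enough to rule out that $\apr(\ta)>0$ on the entire forward maximal interval of existence $I=(-\infty,z^{\ast})$, $z^{\ast}\le+\infty$; if $\apr$ ever fails to be positive there, its first zero is finite. So assume $\apr(\ta)>0$ throughout $I$. Then $a'=\apr>0$ on $I$, so $a$ is strictly increasing, and together with $a(\ta)\to0$ as $\ta\to-\infty$ this forces $a(\ta)>0$ for all $\ta\in I$. Next I claim $a+i\ge -r$ on $I$: from $(a+i)'=\apr-\tfrac1c a\,(a+i+r)$ one reads off $(a+i)'=\apr>0$ at any point where $a+i=-r$, so, since $a+i\to\iminf>1\ge -r$ at $-\infty$, a standard barrier argument shows the level $-r$ cannot be crossed from above. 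In particular $i'=-\tfrac1c a\,(a+i+r)\le0$, hence $i$ is non-increasing and $a+i\le\iminf$ on $I$.

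The key step is an a priori bound on $\int a$. By Theorem \ref{Source_Theorem} the functions $a,\apr,\apr',i'$ decay exponentially as $\ta\to-\infty$, so we may integrate the Wave System \eqref{3D_Flow_Def} over $(-\infty,\ta]$ exactly as in the proof of Proposition \ref{Prop_integrals}. Using $\apr(-\infty)=a(-\infty)=0$ and $\int_{-\infty}^{\ta}\apr=a(\ta)$ this gives
\begin{align}
\apr(\ta)+c\,a(\ta)=\int_{-\infty}^{\ta} a\,(a+i-1)\,ds \qquad\text{and}\qquad c\big(\iminf-i(\ta)\big)=\int_{-\infty}^{\ta} a\,(a+i+r)\,ds. \nonumber
\end{align}
Subtracting and rearranging,
\begin{align}
(1+r)\int_{-\infty}^{\ta} a(s)\,ds = c\,\iminf - c\big(a(\ta)+i(\ta)\big) - \apr(\ta) \le c\,(\iminf+r), \nonumber
\end{align}
where the inequality uses $a+i\ge -r$ and $\apr\ge 0$ on $I$. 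So $\int_{-\infty}^{\ta}a\le c(\iminf+r)/(1+r)=:C$ for every $\ta\in I$. Moreover, since $-r\le a+i\le\iminf$, the first identity bounds $\apr(\ta)+c\,a(\ta)$, hence $a$, hence $i$ (trapped between $-r-a$ and $\iminf$) and $\apr$, uniformly on $I$; a continuation argument then forces $z^{\ast}=+\infty$. But $a$ is positive and increasing, so $\int_{-\infty}^{+\infty}a=+\infty$, contradicting $\int_{-\infty}^{\ta}a\le C$. Hence $\apr$ cannot remain positive on $I$, and it has a finite first zero $z_0$.

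The delicate point is precisely this a priori control: everything rests on combining the two mass identities of Proposition \ref{Prop_integrals} with the barrier $a+i\ge -r$ to bound $\int a$, which in one stroke excludes both escape of $a$ to infinity and finite-time blow-up — without it, a monotone trajectory with $\apr>0$ might conceivably run off to infinity before $a'$ ever vanishes. Everything else (the barrier argument, the exponential decay at $-\infty$ justifying the half-line integration, and continuation from a compact set) is routine.
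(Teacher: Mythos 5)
Your argument is correct in substance but takes a genuinely different route from the paper. The paper never integrates over the half-line: it first shows (Lemma \ref{lem:a_i_falling}, via a first-crossing argument applied to $\apr+i'$) that $a+i$ is strictly decreasing while $\apr>0$, then that $a+i$ cannot stabilize at a positive level, so eventually $a+i\le 1-\epsilon$; from $c\apr+\apr'=a(a+i-1)\le-\epsilon\,a(\tau)<0$ the first zero of $\apr$ then occurs in finite time. You replace these two monotonicity lemmas by the integrated identities on $(-\infty,\ta]$ plus the observation that a positive, increasing $a$ is incompatible with a uniform bound on $\int a$. This is more self-contained, and it correctly confronts the finite-time blow-up issue that the paper leaves implicit; on the other hand the paper reuses Lemma \ref{lem:a_i_falling} later (in Lemma \ref{I-_a0_Lemma}, to get positivity of the trajectory up to $\ta_0$), so its detour is not wasted. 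You are also right to rederive the half-line identities rather than cite Proposition \ref{Prop_integrals} verbatim, since that proposition assumes $\apr$ vanishes at both endpoints; keeping the boundary term $\apr(\ta)$ and justifying the integration by the exponential tails of Theorem \ref{Source_Theorem} is the correct fix.

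One step as written is a non sequitur: ``$i$ is non-increasing, hence $a+i\le\iminf$'' does not follow, because $a$ is increasing. The bound is nonetheless immediate from your own subtracted identity, $(1+r)\int_{-\infty}^{\ta}a(s)\,ds+\apr(\ta)+c\big(a(\ta)+i(\ta)\big)=c\,\iminf$, since $\int a\ge0$ and $\apr\ge0$ give $a+i\le\iminf$ pointwise on $I$; with that, the first identity yields $\apr+ca\le(\iminf-1)\,C$, hence uniform bounds on $a$, $\apr$ and $i$, and the continuation argument and the final contradiction go through as you describe. So the proof is repairable on the spot, but the justification should be reordered: derive the identities first, read off $a+i\le\iminf$ from them, and only then extract the uniform bounds.
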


Again, a certain monotonicity of the system comes at help. We will show that the sum $a(z) + i(z)$ decreases below $1$. Given this, the term $cb(z) + b'(z) = a(z) \cdot [ a(z) + i(z) -1]$ becomes negative, so $b$ must eventually reach $0$.

\begin{lemma} \label{lem:a_i_falling}
As long as $b(s)>0$ for all $s \in (-\infty,z]$, it holds that
\begin{align}
b(\ta) + i'(\ta) < 0.
\end{align}
\begin{proof}
In view of the asymptotic behavior of the trajectory, described in Lemma \ref{Lem:neg_inf_tail}, assume that there exists a finite time $\ta^\ast$, such that for the first time $ b(\ta^\ast) + i'(\ta^\ast)=0$, but still $b(\ta^\ast) > 0$. The wave-equations $ 0 = b'  + c b +a -a  (a+i)$ and $0 = c i' + a  (a+i) + r  a$ imply that
\begin{align}
0 = cb(\ta^\ast) +ci'(\ta^\ast)&  = -a(\ta^\ast) \cdot (1+r)-b'(\ta^\ast) \label{lem:i_infty_a+i'1} \\
& = c \cdot b(\ta^\ast) -a(\ta^\ast) \cdot \big[ a(\ta^\ast)+i(\ta^\ast)+r \big]. \label{lem:i_infty_a+i'2}
\end{align}
Rearranging the second line yields $cb(\ta^\ast) = a(\ta^\ast) \cdot \big[ a(\ta^\ast)+i(\ta^\ast)+r \big]$. As long as $\ta < \ta^\ast$, it holds that $b(\ta) + i'(\ta) < 0$, hence also
\begin{align}
cb(\ta) & < a(\ta) \cdot \big[ a(\ta)+i(\ta)+r\big].
\intertext{However, equality at $\ta = \ta^\ast$ implies that}
\frac{d}{d\ta} cb(\ta)\Big|_{\ta^\ast} & \geq \frac{d}{d\ta} a(\ta) \cdot \big[ a(\ta)+i(\ta)+r\big] \Big| _{\ta^\ast},
\end{align}
which we can rewrite, using both \eqref {lem:i_infty_a+i'1} and \eqref{lem:i_infty_a+i'2} :
\begin{align}
\begin{aligned}
c \cdot b'(\ta^\ast) &\geq b(\ta^\ast) \cdot \big [ a(\ta^\ast)+i(\ta^\ast)+r\big ] + a(\ta^\ast) \cdot \big [ b(\ta^\ast)+i'(\ta^\ast)\big ] \\ 
& = \frac{a(\ta^\ast)}{c} \cdot \big [ a(\ta^\ast)+i(\ta^\ast)+r \big]^2  + 0 \geq 0. \label{lem:i_infty_a+i'3}
\end{aligned}
\end{align}
But $a(\ta^\ast)>0$, so Eq. \eqref{lem:i_infty_a+i'1} implies that $b'(\ta^\ast) = -(1+r)a(\ta^\ast) <0$. This contradicts \eqref{lem:i_infty_a+i'3}.
\end{proof}
\end{lemma}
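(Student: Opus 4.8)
The plan is to run a first-crossing (barrier) argument in the phase-time $\ta$, after recasting the claim as the sign of a single scalar. Multiplying $b(\ta)+i'(\ta)<0$ by $c>0$ and using $c i' = -a(a+i+r)$ from \eqref{3D_Flow_Def}, the assertion is equivalent to positivity of
\[
g(\ta) := a(\ta)\bigl[a(\ta)+i(\ta)+r\bigr] - c\,b(\ta).
\]
Since $c(b+i') = cb - a(a+i+r) = -g$, and $c(a+i)' = cb + ci' = -g$ as well, the asymptotic input $(a+i)'<0$ as $\ta\to-\infty$ from Lemma \ref{Lem:neg_inf_tail} gives $g>0$ for all sufficiently negative $\ta$. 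So the target reduces to showing that $g$ cannot drop to $0$ while $b>0$.

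Next I would suppose the conclusion fails for some admissible $\ta=z$, i.e.\ some $z$ with $b(s)>0$ on $(-\infty,z]$ but $b(z)+i'(z)\ge 0$. Because $g>0$ near $-\infty$ and $g$ is continuous, there is a first finite phase-time $\ta^\ast\le z$ with $g(\ta^\ast)=0$ and $g(\ta)>0$ for $\ta<\ta^\ast$; note $b(\ta^\ast)>0$ because $\ta^\ast\le z$ and $b>0$ on $(-\infty,z]$. As $g$ decreases from positive values to $0$, the barrier condition at the crossing reads $g'(\ta^\ast)\le 0$. This is the inequality I aim to contradict.

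The contradiction comes from two complementary readings of the wave equations at $\ta^\ast$. The vanishing $g(\ta^\ast)=0$ is the same as both $c\,b(\ta^\ast)=a(\ta^\ast)[a+i+r](\ta^\ast)$ and $b(\ta^\ast)+i'(\ta^\ast)=0$. Using the two equivalent expressions $c(b+i')=-a(1+r)-b'$ and $c(b+i')=cb-a(a+i+r)$, the first one evaluated at $\ta^\ast$ pins down $b'(\ta^\ast)=-(1+r)\,a(\ta^\ast)<0$, where $a(\ta^\ast)>0$ since $a'=b>0$ forces $a$ to increase away from its limit $0$. Differentiating $g$ gives $g'=b(a+i+r)+a(b+i')-cb'$, and at $\ta^\ast$ the cross term $a(b+i')$ drops out, leaving
\[
g'(\ta^\ast) = b(\ta^\ast)\,[a+i+r](\ta^\ast) - c\,b'(\ta^\ast) = b(\ta^\ast)\,[a+i+r](\ta^\ast) + c(1+r)\,a(\ta^\ast),
\]
which is strictly positive because $b(\ta^\ast)>0$, $a(\ta^\ast)>0$, and $a+i+r>0$. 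This contradicts $g'(\ta^\ast)\le 0$ and establishes the lemma.

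I expect the only delicate point to be the bookkeeping at the crossing: one must deploy the two representations of $c(b+i')$ in tandem, one to extract the strict sign of $b'(\ta^\ast)$ and the other to compute $g'(\ta^\ast)$, and verify that the term proportional to $b+i'$ vanishes there so that no sign is lost. The auxiliary positivity facts $a(\ta^\ast)>0$ and $a+i+r>0$ are routine from non-negativity and the monotone growth of $a$ from $0$, but they are essential to both positive summands above; the remainder is a direct differentiation of \eqref{3D_Flow_Def}.
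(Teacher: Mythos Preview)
Your proposal is correct and essentially identical to the paper's proof: both run the same first-crossing argument, derive $b'(\ta^\ast)=-(1+r)a(\ta^\ast)<0$ from one form of $c(b+i')$ and use the other form $cb=a(a+i+r)$ at $\ta^\ast$ to force a sign contradiction in the derivative at the crossing. The only cosmetic difference is that you package the quantity $a(a+i+r)-cb$ as a named scalar $g$ and phrase the barrier as $g'(\ta^\ast)\le 0$, whereas the paper writes the equivalent inequality $cb'(\ta^\ast)\ge \tfrac{d}{d\ta}\big[a(a+i+r)\big]\big|_{\ta^\ast}$ directly.
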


\begin{lemma}
As long as $b(s)>0$ for all $s \in (-\infty,z]$, it can not happen that $ a(\ta) + i(\ta) $ converges to some finite $L > 0$.
\begin{proof}
By the previous lemma: $(a+i)'<0$ while $b>0$. Assume that $ a(\ta) + i(\ta) $ converges to a finite value $L>0$ from above, which we denote as $ a(\ta) + i(\ta) \, \searrow \, L$. This implies that also $b+i' \, \nearrow \, 0$. By the Wave Equations \eqref{WAVE_EQ}, these two expressions are equivalent to
\begin{align}
-a(1+r)-b' & \, \nearrow \, 0 \quad \text{ and } \\ cb -a(a+i+r) &\, \nearrow \,  0.
\end{align}
The first convergence indicates that $b' \leq \delta <0$ after some time $\ta_\delta$, since $a$ is strictly increasing and hence positive. The second statement is equivalent to $cb -a \cdot (L+r)  \, \nearrow \, 0$. Thus, also $b$ is increasing. But $b'(\ta)<0$ for all $\ta \geq \ta_\delta$ and while $b>0$, a contradiction.
\end{proof}
\end{lemma}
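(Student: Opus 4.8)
The plan is to argue by contradiction and to reduce everything to a single scalar identity for $b'$ whose right-hand side splits into one piece with a divergent integral and one piece with a convergent integral, forcing $b \to -\infty$ and contradicting $b>0$. Concretely, I would suppose that $b(s)>0$ for all $s \in (-\infty,z]$ (i.e.\ on the whole forward orbit) and that $a(z)+i(z)\to L$ for some finite $L>0$ as $z\to+\infty$. The point of this formulation is that it lets me avoid controlling $a$ and $i$ separately; in particular it finesses the delicate question of whether $a$ stays bounded.

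First I would record the two monotonicity facts that are already available. Since $a'=b>0$, the function $a$ is strictly increasing, and combined with $a>0$ as $z\to-\infty$ from Lemma \ref{Lem:neg_inf_tail} this gives a fixed lower bound $a(z)\geq a(z_0)=:a_0>0$ for all $z\geq z_0$. By Lemma \ref{lem:a_i_falling}, $(a+i)'=b+i'<0$ while $b>0$, so $a+i$ decreases monotonically to its limit $L$. Hence $g:=-(a+i)'>0$ and, by the fundamental theorem of calculus, $g$ is integrable on $[z_0,\infty)$ with $\int_{z_0}^{\infty} g\,ds=(a+i)(z_0)-L<\infty$.

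The algebraic heart of the argument is to rewrite the Wave Equations \eqref{WAVE_EQ} (equivalently System \eqref{3D_Flow_Def}) as a scalar identity for $b'$. Using $cg=-c(b+i')=-cb+a(a+i+r)$ together with $b'=a(a+i)-a-cb$ and subtracting, the quadratic terms cancel and one finds $b'-cg=-a(1+r)$, that is
\[
b'(z)=-(1+r)\,a(z)+c\,g(z).
\]
Integrating from $z_0$ to $z$ gives $b(z)=b(z_0)-(1+r)\int_{z_0}^{z}a\,ds+c\int_{z_0}^{z}g\,ds$. As $z\to+\infty$ the first integral diverges, since $\int_{z_0}^{z}a\,ds\geq a_0\,(z-z_0)\to+\infty$, whereas the second remains bounded by $c\,((a+i)(z_0)-L)$. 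Therefore $b(z)\to-\infty$, contradicting $b>0$, which proves the lemma.

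I expect the only genuine subtlety to be the scenario in which $a$ blows up while $a+i$ stays near $L$ (which would force $i\to-\infty$); a naive approach that lets $a$ and $i$ converge separately and reads off $i'\to-a_\infty(L+r)\neq 0$ breaks down precisely in that regime. The identity for $b'$ is exactly what removes this obstacle: it only uses the lower bound $a\geq a_0>0$ to obtain divergence of $\int a$, and the $L^1$-bound on $g=-(a+i)'$ coming from the monotone convergence of $a+i$, and both hold whether or not $a$ is bounded.
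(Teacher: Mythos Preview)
Your proof is correct and rests on the same algebraic identity as the paper's, namely $c(a+i)'=-(1+r)a-b'$, which you write as $b'=-(1+r)a+cg$ with $g=-(a+i)'$. The difference is in how the identity is exploited: the paper asserts that $(a+i)'\nearrow 0$, reads off $b'\approx -(1+r)a\leq\delta<0$, and then invokes the second wave equation to conclude that $b$ is increasing, reaching a contradiction; you instead integrate the identity directly, using only that $g\in L^1$ (from monotone convergence of $a+i$) and that $\int a=\infty$ (from the lower bound $a\geq a_0>0$). Your route is more robust: the paper's step $(a+i)'\to 0$ is not automatic from monotone convergence alone and would need something like Barbalat's lemma (i.e.\ a bound on $(a+i)''$), and the subsequent replacement of $a(a+i+r)$ by $a(L+r)$ tacitly uses boundedness of $a$. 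Your integrated version sidesteps both issues, exactly as you note in your final paragraph, and in fact renders the paper's second displayed relation unnecessary.
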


We can now finish the
\begin{proof}[\textbf{Proof of Proposition \ref{a0_reached}}]
We now show that there exists a finite phase-time $\ta_0$ such that $b(\ta_0) = 0$ . By the previous lemma, $ a(\ta) + i(\ta) $ decreases below every positive value as long as $b(\ta)>0$. In particular, for some $\epsilon >0$: $a(\tau)+i(\tau) \leq 1-\epsilon$ after some phase-time $\tau$. Then for all $\ta \geq \tau$, since $a > 0$:
\begin{equation}
\begin{aligned}
cb(\ta)+b'(\ta) &= a(\ta) \cdot \big [ a(\ta) + i(\ta) -1 \big ]  \\ &\leq a(\ta) \cdot ( 1-\epsilon -1) = -a(\ta) \epsilon < 0.
\end{aligned}
\end{equation}
Either $cb(\ta) < 0$ and the system has already passed a first local maximum of $a(\ta)$, or we may assume that $b'(\ta) \leq -a(\tau) \epsilon = - \delta <0$. If now $b'(\ta) \leq -\delta$, then $b(\ta)$ ultimately reaches zero, say at some phase-time $\ta_0$. The bound $b'(\ta) \leq - \delta <0$ for all $\ta \geq \tau$ ensures that indeed $\ta_0<+\infty$.
\end{proof}

\subsection{Reaching the attractor of $\boldsymbol{S_{+ \infty}}$}
\label{ch:a_ast}
We now prove that $(a_{\ta_0},0,i_{\ta_0})$ lies in the attractor of the stable set $S_{+\infty}$. Therefore, we show that $a_{\ta_0} \leq a^\ast(i_{\ta_0})$, then Theorem \ref{a_0_I_limit} ensures non-negativity and convergence as $z \rightarrow + \infty$. We again use Proposition \ref{Prop_integrals}, now to interrelate $(0,0,\iminf)$ and $(a_{z_0},0,i_{z_0})$:

\begin{lemma} \label{I-_a0_Lemma}
The following holds at phase-time $\ta_0$:
\begin{align}
a_{\ta_0} &> 0 , \quad a_{\ta_0} + i_{\ta_0} \leq 1, \\
a_{\ta_0} &= \frac{c^2}{c^2+1+r} \Bigg\{ -(i_{\ta_0}+r) \nonumber \\ & \quad + \sqrt{
(i_{\ta_0}+r)^2 + \frac{c^2+1+r}{c^2} \Big( 
(\iminf-1)^2 - (1-i_{\ta_0})^2
\Big )
} \Bigg \}. \label{I_-inf_to_a0}
\end{align}
In the case $ \iminf \in (1,2 - \oc]$, then additionally
\begin{align}
i_{\ta_0}  \in (\oc,1), \qquad a_{\ta_0}  \in (0,1),
\end{align}
and the trajectory is non-negative for $z \in (-\infty, \ta_0]$.
\begin{proof}
As $\ta \rightarrow -\infty$, all $a(\ta), b(\ta),b'(\ta),i'(\ta)$ have exponential and hence integrable tails, cf. Theorem \ref{Source_Theorem}. We thus can apply Proposition \ref{Prop_integrals}. Solving the resulting system of equations results in Eq. \eqref{I_-inf_to_a0}, we omit the intermediate steps. It holds that $a_{\ta_0}>0$ because $b(\ta)>0$ for all $\ta < \ta_0$.

In particular, $a_{\ta_0} > 0$ implies that the second summand under the root in \eqref{I_-inf_to_a0} must be strictly positive, which yields $(\iminf-1)^2  > (1-i_{\ta_0})^2$. Since $b(\ta_0)=0$ for the first time, it must hold that $b'(\ta_0) \leq 0$. Given this, we use $b'(\ta_0) + cb(\ta_0) = a_{\ta_0}(a_{\ta_0}+i_{\ta_0}-1)$ to bound $0 \geq a_{\ta_0}(a_{\ta_0}+i_{\ta_0}-1)$. Since $a_{\ta_0}>0$, this shows that $i_{\ta_0} \leq 1-a_{\ta_0} < 1$.

If we assume additionally that $\iminf \in (1,2-\oc]$, then $(\iminf-1)^2  > (1-i_{\ta_0})^2$ implies that $i_{\ta_0} > 2 - \iminf \geq \oc $. Up to $\ta_0$, $a(z)+i(z)$ is decreasing, which was proven in Lemma \ref{lem:a_i_falling}. Since $a(z)$ is strictly increasing up to $\ta_0$, $i(z)$ is strictly decreasing, but not below $i(z_0) > 0$. Hence, the trajectory stays positive. The inequality $a_{z_0} + i_{z_0}\leq 1$ implies that $a_{z_0} < 1$. 
\end{proof}
\end{lemma}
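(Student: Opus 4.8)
The plan is to apply the three integral identities of Proposition~\ref{Prop_integrals} with $z_1=-\infty$ and $z_2=z_0$, and then to extract every assertion of the lemma from the resulting algebraic relation together with a few sign facts read off at $z_0$. Before that I would record what is needed to invoke Proposition~\ref{Prop_integrals}. Since $z_0$ is by Proposition~\ref{a0_reached} the \emph{first} zero of $b$, and $b>0$ asymptotically by Lemma~\ref{Lem:neg_inf_tail}, we have $b(z)>0$ for all $z<z_0$; hence $a'=b>0$ on $(-\infty,z_0)$, and because $a(z)\to0$ as $z\to-\infty$ this gives $a(z)>0$ on $(-\infty,z_0]$, in particular $a_{z_0}>0$. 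Theorem~\ref{Source_Theorem} shows the trajectory leaves $(0,0,\iminf)$ along its one-dimensional unstable manifold, so $a,b,b',i'$ have exponentially decaying --- hence integrable --- tails as $z\to-\infty$; being smooth on any $[-M,z_0]$ they lie in $L^1((-\infty,z_0])$, and with $b(-\infty)=b(z_0)=0$ the hypotheses of Proposition~\ref{Prop_integrals} are met.

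Next I would substitute the endpoint data $a(z_1)=0$, $b(z_1)=b(z_2)=0$, $a(z_2)=a_{z_0}$, $i(z_1)=\iminf$, $i(z_2)=i_{z_0}$ and $\mathscr{A}(z_2)=\int_{-\infty}^{z_0}a$ into \eqref{first_mass}--\eqref{third_mass} and eliminate $\mathscr{A}(z_2)$ and the common integral; completing the square produces the single identity $(\iminf-1)^2=(i_{z_0}+a_{z_0}-1)^2+\frac{1+r}{c^2}\big(a_{z_0}^2+2c^2a_{z_0}\big)$, the mirror image of the relation behind Lemma~\ref{I_+infty_formula_lemma}. Read as a quadratic in $a_{z_0}$ and solved with the sign constraint $a_{z_0}>0$, this is exactly \eqref{I_-inf_to_a0}. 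For the bound $a_{z_0}+i_{z_0}\le1$ I would evaluate the $b$-line of \eqref{3D_Flow_Def} at $z_0$: because $b$ was positive just before $z_0$ and $b(z_0)=0$ we have $b'(z_0)\le0$, while $b'(z_0)=a_{z_0}\,(a_{z_0}+i_{z_0}-1)$; since $a_{z_0}>0$ this forces $a_{z_0}+i_{z_0}\le1$, hence also $i_{z_0}<1$.

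Finally, assuming $\iminf\in(1,2-\oc]$: positivity of $a_{z_0}$ in \eqref{I_-inf_to_a0} forces the quantity $(\iminf-1)^2-(1-i_{z_0})^2$ under the square root to be strictly positive, i.e. $(\iminf-1)^2>(1-i_{z_0})^2$; combined with $i_{z_0}<1$ and $\iminf>1$ this gives $\iminf-1>1-i_{z_0}$, so $i_{z_0}>2-\iminf\ge\oc$, whence $i_{z_0}\in(\oc,1)$ and $a_{z_0}\le1-i_{z_0}<1$, i.e. $a_{z_0}\in(0,1)$. Non-negativity of the whole trajectory on $(-\infty,z_0]$ then follows from $a>0$ there together with Lemma~\ref{lem:a_i_falling}, which applies for every $z<z_0$ (there $b>0$ on $(-\infty,z]$) and yields $(a+i)'=b+i'<0$; since $a'=b>0$ this makes $i$ strictly decreasing on $(-\infty,z_0]$, so $i\ge i_{z_0}>\oc\ge0$ throughout.

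The only genuinely delicate points are (i) making Proposition~\ref{Prop_integrals} legitimate at the improper left endpoint --- vanishing boundary terms and integrability of $a,b,b',i'$, which is precisely what the exponential decay in Theorem~\ref{Source_Theorem} provides --- and (ii) justifying the choice of root in \eqref{I_-inf_to_a0} and the ensuing inequality $(\iminf-1)^2>(1-i_{z_0})^2$, which leans on $a_{z_0}>0$ together with the fact that $i$ stays large enough up to $z_0$ (because $i$ being decreasing forces $a+i+r\ge0$ there). Everything else is bookkeeping with the sign of $b'(z_0)$ and the monotonicity supplied by Lemma~\ref{lem:a_i_falling}.
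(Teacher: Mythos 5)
Your proposal is correct and follows essentially the same route as the paper: apply Proposition~\ref{Prop_integrals} on $(-\infty,\ta_0]$ (justified by the exponential tails from Theorem~\ref{Source_Theorem}), read off $a_{\ta_0}+i_{\ta_0}\le 1$ from $b'(\ta_0)\le 0$, deduce $(\iminf-1)^2>(1-i_{\ta_0})^2$ from positivity of $a_{\ta_0}$ in \eqref{I_-inf_to_a0}, and use Lemma~\ref{lem:a_i_falling} for the monotonicity and positivity statements. You additionally write out the completed-square identity $(\iminf-1)^2=(i_{\ta_0}+a_{\ta_0}-1)^2+\tfrac{1+r}{c^2}(a_{\ta_0}^2+2c^2a_{\ta_0})$ that the paper omits, and it is indeed equivalent to \eqref{I_-inf_to_a0}.
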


Finally, we connect the unstable manifold of $(0,0,\iminf)$ with the attractor of $S_{+\infty}$:
\begin{proposition}[Reaching the attractor of $S_{+ \infty}$] \label{GOOD_leaving}
Let $\iminf \in (1,2 - \oc]$. The non-negative branch of the unstable manifold of $(0,0,\iminf)$ reaches the point $(a_{\ta_0},0,i_{\ta_0})$, where $a_{\ta_0} \in (0,1)$ and $i_{\ta_0} \in (\oc,1)$. It then holds that
\begin{align}
0<a_{\ta_0} \leq a^\ast(i_{\ta_0}),
\end{align}
for $a^\ast$ like in Definition \ref{A_ast_def}. In view of Theorem \ref{a_0_I_limit}, the trajectory that starts/continues in such a point $(a_{\ta_0}, 0, i_{\ta_0})$ converges to $S_{+\infty}$ as $\ta \rightarrow + \infty$ and stays non-negative.

\begin{proof}
We have just shown that $(i_{\ta_0}, a_{\ta_0}) \in (0,1)^2$ and that $a_{\ta_0} + i_{\ta_0} \leq 1$. Recall Definition \ref{A_ast_def}: $a^\ast(i_0) = \min \{ \alpha(i_0), 1- i_0 \}$, where $\alpha(i_0)$ is given by
\begin{align}
\begin{aligned}
	\alpha(i_0) &= \frac{c^2}{1+c^2+r}
	\Bigg\{ -(i_0 +r) \\ & \quad + \sqrt{ (i_0 +r)^2 + \frac{c^2+1+r}{c^2}
	\Big ( (1-\oc)^2 - (1-i_0)^2 \Big )	} \Bigg\}. \label{eq:expr_1}
	\end{aligned}
	\end{align}
We have already verified that $a_{\ta_0} + i_{\ta_0} \leq 1$, so proving $a_{\ta_0} \leq \alpha(i_{\ta_0})$ suffices for proving $a_{\ta_0} \leq a^\ast(i_{\ta_0})$. By \eqref{I_-inf_to_a0}, we know that
\begin{align}
\begin{aligned}
\hspace{1cm} a_{\ta_0} &= \frac{c^2}{c^2+1+r} \Bigg\{ -(i_{\ta_0}+r) \\ & \quad + \sqrt{
(i_{\ta_0}+r)^2 + \frac{c^2+1+r}{c^2} \Big( 
(\iminf-1)^2 - (1-i_{\ta_0})^2
\Big)
} \Bigg\}. \label{eq:expr_2}
\end{aligned}
\end{align}
The two expressions \eqref{eq:expr_1} and \eqref{eq:expr_2} are very similar. After some elementary steps, the claim $a_{\ta_0} \leq \alpha(i_{\ta_0})$ is equivalent to
\begin{align}
(\iminf -1)^2 \leq (1- \oc)^2.
\end{align}
This is equivalent to $\iminf \leq 2- \oc$, since $\iminf >1$ and $ \oc \leq 1$. But that is just how we have chosen $\iminf$.
\end{proof}
\end{proposition}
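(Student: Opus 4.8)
The plan is to piece together the results already established and reduce the statement to a single algebraic inequality. Proposition~\ref{a0_reached} provides a finite first time $\ta_0$ with $b(\ta_0)=0$, and Lemma~\ref{I-_a0_Lemma}, specialized to $\iminf\in(1,2-\oc]$, already tells us that the state at that time, $(a_{\ta_0},0,i_{\ta_0})$, satisfies $a_{\ta_0}\in(0,1)$, $i_{\ta_0}\in(\oc,1)$, $a_{\ta_0}+i_{\ta_0}\le 1$, that the trajectory is non-negative on $(-\infty,\ta_0]$, and that $a_{\ta_0}$ is given explicitly by \eqref{I_-inf_to_a0} in terms of $i_{\ta_0}$ and $\iminf$. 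So the only genuinely new thing to prove is $0<a_{\ta_0}\le a^\ast(i_{\ta_0})$; once that is in place, Theorem~\ref{a_0_I_limit} applied with initial data $(a_{\ta_0},0,i_{\ta_0})$ delivers non-negativity and convergence to $S_{+\infty}$ for all $\ta\ge\ta_0$, and splicing with the non-negativity on $(-\infty,\ta_0]$ finishes the proof.

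To get the inequality, recall $a^\ast(i_{\ta_0})=\min\{\alpha(i_{\ta_0}),1-i_{\ta_0}\}$; since $a_{\ta_0}+i_{\ta_0}\le 1$ already gives $a_{\ta_0}\le 1-i_{\ta_0}$, it remains to show $a_{\ta_0}\le\alpha(i_{\ta_0})$. First I would check that $\alpha(i_{\ta_0})$ is real, which holds because $i_{\ta_0}>\oc$ makes the radicand $(1-\oc)^2-(1-i_{\ta_0})^2$ non-negative. Then I would place the defining formula \eqref{a0_I+infty_formula} for $\alpha$ next to \eqref{I_-inf_to_a0}: writing $C:=i_{\ta_0}+r$ and $\kappa:=c^2/(c^2+1+r)$, both numbers have the shape $\kappa\bigl(-C+\sqrt{C^2+\kappa^{-1}x}\bigr)$, with $x=(1-\oc)^2-(1-i_{\ta_0})^2$ for $\alpha(i_{\ta_0})$ and $x=(\iminf-1)^2-(1-i_{\ta_0})^2$ for $a_{\ta_0}$, and the common prefactor, the common $-C$, and the common $C^2$ under the root all cancel. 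Since $x\mapsto -C+\sqrt{C^2+\kappa^{-1}x}$ is strictly increasing, $a_{\ta_0}\le\alpha(i_{\ta_0})$ is equivalent to $(\iminf-1)^2\le(1-\oc)^2$; as $\iminf>1$ and $\oc\le 1$, both sides are non-negative, so this is equivalent to $\iminf\le 2-\oc$, which is exactly the standing hypothesis on $\iminf$.

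I do not anticipate a real obstacle: the argument is essentially a bookkeeping exercise on top of Lemma~\ref{I-_a0_Lemma} and Theorem~\ref{a_0_I_limit}. The one place that needs care is the comparison of \eqref{I_-inf_to_a0} with the definition of $\alpha$ --- the key observation that makes it painless is that the two closed-form expressions are structurally identical except for the constant appearing in the radicand, so after cancelling the shared pieces the whole claim collapses to $(\iminf-1)^2\le(1-\oc)^2$, and the only subtlety is verifying the signs so that removing the squares is legitimate. After that, quoting Theorem~\ref{a_0_I_limit} (with $i_0=i_{\ta_0}$, $a_0=a_{\ta_0}$, noting $i_{\ta_0}\in(\oc,1)$ so that we are in its hypotheses) gives the convergence to $S_{+\infty}$ and the monotonicity $b\le 0$, $i'\le 0$ on $[\ta_0,\infty)$, completing the connection of the unstable manifold of $(0,0,\iminf)$ to the stable set.
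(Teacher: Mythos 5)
Your proposal is correct and follows the same route as the paper: it uses the closed form \eqref{I_-inf_to_a0} for $a_{\ta_0}$, matches it term-by-term against the definition of $\alpha(i_{\ta_0})$, and reduces the claim to $(\iminf-1)^2\le(1-\oc)^2$, i.e.\ $\iminf\le 2-\oc$, before invoking Theorem \ref{a_0_I_limit}. The explicit monotonicity observation for $x\mapsto -C+\sqrt{C^2+\kappa^{-1}x}$ is exactly the ``elementary steps'' the paper leaves implicit.
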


\section{Concluding the proof of the main result}
\label{ch:main_thm_proof}

We bring together our results from the previous sections and complete the

\begin{proof}[\textbf{Proof of Theorem \ref{Main_Theorem}}]
Let $\iminf \in (1,2 - \oc]$. We consider the ODE System \eqref{3D_Flow_Def} in coordinates $a,b,i$. The unstable manifold of the fixed point $(a,b,i)=(0,0,\iminf)$ has dimension one. Its two branches are the only trajectories that leave the fixed point, which is stated in Theorem \ref{Source_Theorem}. There is one branch of the unstable manifold such that $a(\ta)>0$ as $\ta \rightarrow -\infty$, we follow this trajectory in positive direction of $\ta$. There is a finite phase-time $\ta_0$, such that for the first time $b(\ta_0) = 0$, see Proposition \ref{a0_reached}. Up to time $\ta_0$, $b(\ta)>0$ and $a(\ta),i(\ta)> 0$. Denote the state of the system at $\ta_0$ as $(a_{\ta_0},0,i_{\ta_0})$. Lemma \ref{I-_a0_Lemma} states that $i_{\ta_0} \in (\oc,1)$, Proposition \ref{GOOD_leaving} states that $a_{\ta_0} \in (0, a^\ast(i_{\ta_0})]$, for $a^\ast$ as in Definition \ref{A_ast_def}. By Theorem \ref{a_0_I_limit}, we then know that $(a_{\ta_0},0,i_{\ta_0})$ lies in a non-negative attractor of the set $S_{+\infty}$. Thus, $a(\ta),b(\ta),i(\ta) \rightarrow (0,0,\ipinf)$ as $\ta \rightarrow + \infty$, where $\ipinf \in [\oc,1)$, and ultimately $a(\ta),i(\ta) \geq 0$ for all $\ta \in \mathds{R}$.

For any non-negative and bounded solution, the identity $\iminf + \ipinf = 2$ holds by Proposition \ref{nec_cond_thm}. For $c>0$ and $\iminf \in (1,2 - \oc]$, the previous paragraph proves existence and uniqueness of the claimed wave. For $\iminf = 1$, the constant solution can be the only non-negative and bounded one.

Now assume that there exists a non-constant, bounded and non-negative solution. By monotonicity of $i(z)$, it must converge as $z \rightarrow \pm \infty$. If $\iminf \in (1,2-\oc]$, it is one of the above solutions. If $\iminf > 2-\oc$, then $\ipinf < \oc$. In this case, the trajectory can not stay non-negative as $z \rightarrow + \infty$, which is stated by Proposition \ref{Nec_oscillation_convergence}.
\end{proof}

\section{Discussion and outlook at stability} \label{ch:discussion}
\subsection{FKPP-waves}

We have given a description of all bounded and non-negative traveling waves of the Reaction-Diffusion System \eqref{EQUA}. For the most related systems, the FKPP-equation \cite{KPP_1937_Wave, Fisher_1937_Wave}, the FitzHugh-Nagumo-equation \cite{FitzHugh_equation, Nagumo_equation} and combustion equations \cite{Berestycki_Scheurer_Trav_Combustion}, no such continuum of traveling waves has yet been constructed.

Apart from that, the traveling waves of System \eqref{EQUA} are closely related to pulled FKPP-waves with only a single type of particles \cite{Fisher_1937_Wave, KPP_1937_Wave}. The equation for such a wave $w(z)$ reads $0 = c w' + w'' + F(w)$. For the purpose of a simple comparison, we let $F(w) = aw - w^2$, where $a>0$ is the branching rate of the particles. In this case, Theorem \ref{Main_Theorem} states that the convergence of System \eqref{EQUA} as $z \rightarrow + \infty$ is identical to that of $w$, if $ a = 1-  \ipinf$, see e.g. \cite{Uchiyama1977}. In words, the asymptotic growth speed of traveling waves of System \eqref{EQUA} coincides with that of simple FKPP-waves in presence of a constant density $\ipinf$ of inhibiting particles. Moreover, Theorem \ref{Main_Theorem} implies that $\oc = 0$ for all $c \geq 2$. Thus, the minimal speed of an invasive front, where $\ipinf = 0$, is given by $c_{\min}=2$. Again, this coincides with the minimal wave speed of the associated FKPP-equation, \textit{i.e.} in the absence of inactive particles. It is this critical front which can be interpreted as the most natural one, our simulations indicate that it always arises under compact initial data. If we assume convergence, a technique of Berestycki, Brunet \& Derrida \cite{Berestycki_2018_Wave_Front} yields an upper bound for the speed of the traveling front, just by ignoring the dampening influence of the inactive particles. For compact initial data, the system always chooses the smallest possible wave speed, as suggested.

The emergence of traveling fronts is known for many reaction-diffusion systems. We suggest the
literature \cite{Britton_BiologyEssentials, Volpert_waves_biology, Othmer2009WavesinBiology} for more examples with a biological motivation. Rigorous proofs of these phenomena are rare. Typically, only the form of the traveling waves is analyzed analytically. The FKPP-equation is one of the cases, where the convergence of the front of the PDE towards a traveling wave solution can be proved. The first rigorous proof was done by Kolmogorov, Petrovsky \& Piscunov in 1937 \cite{KPP_1937_Wave}. Extensions of this result to more general initial data and a more precise description of the speed of the front have been provided by Uchiyama \cite{Uchiyama1977} and M. Bramson \cite{Bramson_1983ConvergenceOS}. The approach of Kolmogorov \textit{et al.} and Uchiyama seems to be restricted to systems with only a single type of particles, as it relies on a maximum principle and monotonicity of the front. The approach of Bramson relies on a relationship between the FKPP-equation and branched Brownian motion, which can not be applied in the present case since the inactive particles do not diffuse. A singular perturbation of System \eqref{EQUA} which introduces a small diffusion to the inactive particles will be subject to future investigations. This would also rule out some difficulties when analyzing the stability of the traveling waves against perturbations, discussed in the next section.

\subsection{Stability of the traveling waves}

We give a brief introduction to stability of traveling waves against small perturbations, in the spirit of the introduction in \cite{Ghazaryan_overview}. A good overview, where the following concepts are presented in depth, has been written by Sandstede \cite{SANDSTEDE_stability_traveling}.

Consider a reaction-diffusion system
\begin{align}
Y_t &= D \cdot Y_{xx} + R(y), \label{Eq:Reaction_System_Disc}
\intertext{where $Y \in \mathds{R}^n, x \in \mathds{R}, t \geq 0, D = \text{diag} (d_1, \dots , d_n)$ with $d_i \geq 0$, and $R$ a smooth reaction. In the moving frame $z = x-ct$, the System reads }
Y_t &= D \cdot Y_{zz} + c Y_z + R(y). \label{Eq:Shifted_System_Disc}
\end{align}
A traveling wave $w(z)$ with speed $c$ is a constant solution of Eq. \eqref{Eq:Shifted_System_Disc}. The wave $w$ is called \textit{non-linearly stable} in a space $\mathcal{X}$, if any solution of the PDE \eqref{Eq:Shifted_System_Disc} which starts in $Y_0 = w + \tilde{Y}$, where $\tilde{Y} \in \mathcal{X}$ is a sufficiently small perturbation, converges to a shift of $w$. This type of stability is often encoded in the spectrum of the operator $\mathcal{L}$, that is obtained by linearizing the equation for the perturbation $\tilde{Y}$ in \eqref{Eq:Shifted_System_Disc} around to the constant part $w$:
\begin{align}
\tilde{Y}_t = D \cdot \tilde{Y}_{zz} + c \tilde{Y}_z + JR(w) \cdot \tilde{Y} := L \tilde{Y}, \label{Eq:Linearized_Operator}
\end{align}
where $JR$ is the Jacobian of the reaction $R$. Let $\mathcal{L}:\mathcal{X} \rightarrow \mathcal{X}$ be the operator given by $\tilde{Y} \rightarrow L \tilde{Y}$. We say that the wave $w$ is \textit{spectrally stable} in $\mathcal{X}$ if the spectrum of $\mathcal{L}$ is contained in the half-plane $\mathfrak{Re}(\gamma) < 0$, except maybe a simple a simple eigenvalue at $0$ (that corresponds to the traveling wave itself, if $w' \in \mathcal{X}$). For diffusive systems, a quite general theory has been developed. If $\mathcal{X}$ is appropriately chosen, spectral stability implies non-linear stability, we refer to the literature \cite{SANDSTEDE_stability_traveling, Ghazaryan_overview}. Classical results are e.g. given for subspaces of $\mathcal{X} = H^1$, the $L^2$-Sobolev space.

To cut a long story short, we are not aware of any rigorous framework for studying the non-linear stability of System \eqref{EQUA}. Two problems arise, that so far have been treated only separately \cite{Ghazaryan_overview, Kirchgaessner_critical_fronts}.

Most importantly, the traveling waves of System \eqref{EQUA} can not be stable against perturbations in the classical sense, since the inactive particles neither react nor diffuse. Any initial deviation remains for all times, as shown in Figure \ref{Diff_system_pics}. However, the actual front of the system does converge to a traveling wave. For capturing this idea, we introduce the weighted space $\mathcal{X} = H^1_\alpha$ with norm $||f||_{H^1_\alpha} = ||f \cdot e^{\alpha z}||_{H^1}$ for some $\alpha >0$. Non-linear stability in $H_1^\alpha$ is referred to as \textit{convective stability}. Convergence of the PDE in the moving frame \eqref{Eq:Shifted_System_Disc} in $H^1_\alpha$ means that the front of the system approaches the traveling wave, whereas any initial finite and local deviation is convected towards $z= -\infty$ and vanishes due to the weighting. A first rigorous result regarding convective stability was obtained by Ghazaryan \textit{et al.} \cite{Ghazaryan_overview}. They could show that spectral stability in $H^1_\alpha$ implies convective stability against small perturbations in $H^1_\alpha \cap H^1$. For their approach, the authors require that the weight $\alpha$ can be chosen such that all eigenvalues $\gamma$ of $\mathcal{L}$ except zero fulfill $\mathfrak{Re}(\gamma) \leq \nu < 0$ and such that the derivative $w' \in H^1_\alpha$ of the traveling wave is an eigenfunction that corresponds to a simple eigenvalue at zero. Unfortunately, this setting is not suited for studying pulled FKPP-fronts: the assumption $w' \in H^1_\alpha$ implies that the continuous spectrum of $\mathcal{L}$ touches the origin, see e.g. chapter 6 in the work of Sattinger \cite{SATTINGER_stability}.

Another difficulty arises when studying critical pulled fronts (with minimal possible speed) whose tail as $z \rightarrow + \infty$ converges sub-exponentially, as in Theorem \ref{Main_Theorem}. In this case, the requirement $w' \in H^1_\alpha$ is only fulfilled for rather small values of $\alpha$, which do not suffice for shifting the continuous spectrum of $\mathcal{L}$ to the left half-plane. For diffusive systems, this more delicate case was first treated by Kirchgässner \cite{Kirchgaessner_critical_fronts}, a recent overview is given in \cite{Faye2018_critical_FKPP}. In contrast to non-critical waves, the type of convergence of the system to the critical traveling wave is not exponential, but algebraic. For the convergence of the FKPP-equation with compact initial data to its critical traveling wave, this is known since the pioneering work of Kolmogorov \textit{et al.} \cite{KPP_1937_Wave}. After introducing a small diffusion to the inactive particles, we could use the result of Kirchgässner, this seems like the next natural step.

For the most natural traveling wave solution, the critical one with speed $c=2$ and $\ipinf = \oc =  0$, we performed a numerical analysis that strongly indicates that this wave is spectrally stable in $H^1_\alpha$, when we choose $\alpha = - \mu_{ + \infty}=c/2$. The details are presented in Appendix \ref{ch:spectrum_evans}. Thus, based on the work of Ghazaryan \textit{et al.} regarding convective stability \cite{Ghazaryan_overview} and the work of Kirchgässner regarding critical fronts \cite{Kirchgaessner_critical_fronts}, we dare to make an educated guess: we expect that this traveling wave is convectively stable against small perturbations in $H^1_{c/2} \cap H^1$, with algebraic speed of convergence.\\

\textbf{Acknowledgment:} The author would like to thank Anton Bovier and Muhittin Mungan for their support and the fruitful discussions. He also would like to thank the anonymous referees who provided very useful and detailed comments on a previous version of the manuscript.

\textbf{Numerical analysis:} The spectral analysis was performed numerically via STABLAB \cite{Barker_Stablab}, which is a MATLAB-library exactly for this purpose. The simulations of the Reaction-Diffusion System \eqref{EQUA} were performed via Wolfram Mathematica. The code can be accessed upon request.

\bibliography{Bibliothek.bib}{}
\bibliographystyle{plain}

\appendix
\section*{Appendix}

\section{Center manifold calculations} \label{ch:Center_manifold}
\subsection{Review of center manifold theory}
\label{ch:center_manifold_theory}
\begin{definition}[Normal form] \label{Def_Normal_Form}
The normal form of a dynamical system $dx/dt = f(x), x \in \mathds{R}^n$ around its fixed point $0 \in \mathds{R}^n$ is defined as follows. Write $x=(y,z)$ where $y \in \mathds{R}^k, z \in \mathds{R}^l$ and $k+l = n$, such that $dx/dt = f(x)$ is equivalent to:
\begin{equation}
\begin{aligned}
\frac{dy}{dt} &= A \cdot y + g(y,z),  \\
\frac{dz}{dt} &= B \cdot z + h(y,z). \label{Normal_dynamics}
\end{aligned}
\end{equation}
We require that the eigenvalues of $A \in \mathds{R}^{k \times k} $ have zero real parts and those of $B \in \mathds{R}^{l \times l}$ have nonzero real parts. Further, both functions $g: \mathds{R}^n \rightarrow \mathds{R}^k$ and $h: \mathds{R}^n \rightarrow \mathds{R}^l$ are smooth and vanish together with their first-order partial derivatives at the origin.
\end{definition}

\begin{proposition} \label{Existence_Normal_Form}
Let $f: \mathds{R}^n \rightarrow \mathds{R}^n$ be smooth. Let a dynamical system $dx / dt = f(x), x \in \mathds{R}^n$ have a fixed point $x_0 \in \mathds{R}^n$, such that the eigenvectors of the Jacobian $Df(x_0)$ span the entire $\mathds{R}^n$. The system can be written in normal form as in Definition \ref{Def_Normal_Form}.
\end{proposition}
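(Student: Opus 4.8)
The plan is to obtain the normal form by two elementary changes of variable: a translation taking the fixed point to the origin, followed by a linear change of coordinates adapted to the spectral splitting of the Jacobian. First I would set $\tilde x := x - x_0$, so the system becomes $d\tilde x/dt = \tilde f(\tilde x)$ with $\tilde f(\tilde x) := f(\tilde x + x_0)$; this $\tilde f$ is smooth, has a fixed point at $0$, and $D\tilde f(0) = Df(x_0) =: M$, whose eigenvectors still span $\mathds{R}^n$. Hence it suffices to treat the case $x_0 = 0$.

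Next I would block-diagonalize $M$ over $\mathds{R}$. By hypothesis $M$ has a basis of eigenvectors, i.e. it is diagonalizable over $\mathds{C}$; grouping each non-real eigenvalue with its complex conjugate and replacing the corresponding pair of eigenvectors by their real and imaginary parts produces a real basis of $\mathds{R}^n$ in which $M$ is block-diagonal, with a $1 \times 1$ block for each real eigenvalue and a $2 \times 2$ block (a scaled rotation) for each conjugate pair. I would then permute this basis so that all blocks whose eigenvalues lie on the imaginary axis come first, followed by all blocks whose eigenvalues have nonzero real part; this is legitimate because eigenspaces attached to disjoint sets of eigenvalues are $M$-invariant and independent. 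Let $P$ be the resulting invertible real matrix, write the first $k$ of the new coordinates as $y$ and the remaining $l = n-k$ as $z$. Then in the coordinates $(y,z) := P^{-1}\tilde x$ the linear part of the vector field is $\mathrm{diag}(A,B)$ with $A \in \mathds{R}^{k\times k}$ having all eigenvalues on the imaginary axis and $B \in \mathds{R}^{l\times l}$ having all eigenvalues off it, as required in Definition \ref{Def_Normal_Form} (an empty group simply makes the corresponding block vacuous).

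Finally I would isolate the nonlinear remainders. Writing the transformed vector field as $P^{-1}\tilde f\big(P(y,z)\big) = \big(F_1(y,z),\, F_2(y,z)\big)$, I set $g(y,z) := F_1(y,z) - Ay$ and $h(y,z) := F_2(y,z) - Bz$, so that the system reads $dy/dt = Ay + g(y,z)$, $dz/dt = Bz + h(y,z)$. Smoothness of $g$ and $h$ is inherited from that of $f$ since $P$ is linear. Because $(y,z) = 0$ is a fixed point we get $g(0) = h(0) = 0$, and because the coordinate change is linear the Jacobian of $(F_1,F_2)$ at the origin equals $\mathrm{diag}(A,B)$, whence the first-order partial derivatives of $g$ and $h$ vanish at $0$ as well. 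This is exactly the normal form of Definition \ref{Def_Normal_Form}.

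There is no genuinely hard step here; the only point deserving care is the passage from the complex diagonalization guaranteed by the spanning hypothesis to an honest real block-diagonal form, together with the observation that the center part and the hyperbolic part of the spectrum can be placed in separate coordinate blocks because the associated eigenspaces are $M$-invariant and span $\mathds{R}^n$ in direct sum.
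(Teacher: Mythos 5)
Your proof is correct and follows essentially the same route the paper indicates: translate the fixed point to the origin, pass to coordinates adapted to the eigenvector basis of the Jacobian so that its center and hyperbolic spectral parts occupy separate blocks, and define $g,h$ as the nonlinear remainders, whose vanishing to first order at the origin follows from the fixed-point condition and the linearity of the coordinate change. You are in fact somewhat more careful than the paper's one-line sketch (which defers to the explicit computation in its appendix and to Kirchgraber--Palmer), in particular in converting the complex diagonalization into a real block-diagonal form via real and imaginary parts of conjugate eigenvector pairs.
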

The proof includes a simple but technical change of coordinates into the system of eigenvectors of the Jacobian $Df(x_0)$. This will be done explicitly in Section \ref{ch:CM_calculations}. For the underlying theory and the more general case, we refer to the monograph of U. Kirchgraber \& K.J. Palmer \cite{Kirchgraber_Palmer_Flows_Invar_Manifolds}.

\begin{definition}[Center manifold]
Consider a dynamical system in normal form \eqref{Normal_dynamics}. Let $\phi: \mathds{R}^k \rightarrow \mathds{R}^l$ be a smooth function such that $\phi(0) = 0$ and also its derivative $D \phi(0) = 0$. Assume that the set
\begin{align}
\mathcal{CM} = \Big \{ y \in \mathds{R}^k, z \in \mathds{R}^l : \, z = \phi(y) \Big \} \label{eq:cm_notation}
\end{align}
is invariant under Dynamics \eqref{Normal_dynamics}. It is then called a \textit{center manifold} of the fixed point (due to its vanishing derivative at $0$).
\end{definition}

We will use a local version of the center manifold, which can be shown to exist in a neighborhood of the fixed point:

\begin{theorem}[Local center manifold, cf. Theorem 4.1. in \cite{Kirchgraber_Palmer_Flows_Invar_Manifolds}]  \label{CM_Flow_Thm}
Consider a smooth dynamical system in normal-form \eqref{Normal_dynamics}, where $\dim(y) =k \geq 1$, such that the Jacobian at the fixed point has $k$ eigenvalues with zero real part. Let $c_1 + c_2 = \dim(z)$, where the matrix $B$ has $c_1$ eigenvalues with positive real part and $c_2$ eigenvalues with negative real part. Then locally, there exist a unique center manifold of dimension $k$, a unique unstable manifold of dimension $c_1$ and a unique stable manifold of dimension $c_2$.

The center manifold can be written as $\big\{ (y,z) : \, z = \phi(y) \big\}$ like in \eqref{eq:cm_notation}. There exists a homeomorphism defined in an open neighborhood of the origin which takes solutions of $dx / dt = f(x)$ onto solutions of
\begin{equation}
\begin{aligned}
\frac{dy}{dt} &= A \cdot y + g\big( y,\phi(y) \big) ,  \\
\frac{dz}{dt} &= B \cdot z. \label{eq:normal_form_hom}
\end{aligned}
\end{equation}
\end{theorem}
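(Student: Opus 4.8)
The result is classical; I would prove it by the Lyapunov--Perron (integral-equation) method, which is essentially the route taken in \cite{Kirchgraber_Palmer_Flows_Invar_Manifolds}. \textbf{First}, since the statement is purely local, I would globalize by a cutoff: multiply the nonlinearities $g,h$ in \eqref{Normal_dynamics} by a smooth function $\chi$ that equals $1$ on $B_{\delta/2}(0)$ and vanishes outside $B_\delta(0)$. For $\delta$ small the modified nonlinearities $\tilde g,\tilde h$ are globally defined and bounded with $\operatorname{Lip}(\tilde g),\operatorname{Lip}(\tilde h)\le\varepsilon$ and $\varepsilon\to 0$ as $\delta\to 0$; any invariant manifold of the modified system contained in $B_{\delta/2}(0)$ is automatically invariant for the original one, so it suffices to treat the modified (globally nice) system.

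\textbf{Second}, the construction of $\phi$. Split $B=\operatorname{diag}(B_u,B_s)$ with $\operatorname{Re}\sigma(B_u)\ge\beta>0$ and $\operatorname{Re}\sigma(B_s)\le-\beta<0$, while $\operatorname{Re}\sigma(A)=0$, and let $\Pi_u,\Pi_s$ be the associated spectral projections. On the Banach space $X=\{\phi\in C^0(\mathbb R^k,\mathbb R^l):\ \phi(0)=0,\ \operatorname{Lip}(\phi)\le 1\}$ with the supremum norm, for each $\xi$ let $y_\phi(\cdot\,;\xi)$ solve $\dot y=Ay+\tilde g(y,\phi(y))$, $y(0)=\xi$; because $\operatorname{Re}\sigma(A)=0$ and $\varepsilon$ is small, $y_\phi$ exists for all time and grows at most like $e^{\varepsilon'|t|}$ with $\varepsilon'\ll\beta$. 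Define the Lyapunov--Perron operator
\[ (\mathcal T\phi)(\xi)=-\int_0^{\infty}e^{-B_u s}\,\Pi_u\,\tilde h\bigl(y_\phi(s;\xi),\phi(y_\phi(s;\xi))\bigr)\,ds+\int_{-\infty}^0 e^{-B_s s}\,\Pi_s\,\tilde h\bigl(y_\phi(s;\xi),\phi(y_\phi(s;\xi))\bigr)\,ds, \]
which by construction encodes that $z(t)=\phi(y(t))$ be the unique orbit through its fiber growing sub-exponentially both as $t\to+\infty$ and as $t\to-\infty$. The gap between $\operatorname{Re}\sigma(A)=0$ and $\pm\beta$ makes both integrals converge and, after shrinking $\delta$, makes $\mathcal T:X\to X$ a contraction; its unique fixed point $\phi$ has an invariant graph, and $\tilde g(0)=\tilde h(0)=0$, $D\tilde g(0)=D\tilde h(0)=0$ force $\phi(0)=0$, $D\phi(0)=0$.

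\textbf{Third}, smoothness and the other manifolds. To promote $\phi$ to $C^k$ when $f\in C^k$ I would use the fiber-contraction theorem: the equation that $D\phi$ must satisfy is again a contraction, now on a space of continuous candidate derivatives fibered over the already-constructed continuous $\phi$, and iterating this $k$ times yields $\phi\in C^k$. The unstable manifold of dimension $c_1$ and the stable manifold of dimension $c_2$ come from the same Lyapunov--Perron scheme restricted to orbits decaying, respectively, backward and forward in time; this is the classical (un)stable manifold theorem and is easier, since there the relevant linear part is genuinely hyperbolic. Finally, for the topological conjugacy onto solutions of the partially linearized system \eqref{eq:normal_form_hom}, I would flatten the center manifold by the change of variables $z\mapsto z-\phi(y)$, so that it becomes $\{z=0\}$ and the $z$-equation takes the form $\dot z=Bz+H(y,z)$ with $H(y,0)\equiv 0$; then a parameter-dependent Hartman--Grobman theorem (in the hyperbolic variable $z$, with $y$ frozen) conjugates $\dot z=Bz+H$ to $\dot z=Bz$, and one last fiber-preserving homeomorphism removes the residual $z$-dependence of the $y$-equation. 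This is the Shoshitaishvili reduction principle.

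\textbf{Expected obstacle.} The existence and uniqueness of $\phi$ is routine once the cutoff and the spectral gap are in place. The two genuinely delicate points are the $C^k$ regularity --- the map $\mathcal T$ is a contraction only in $C^0$, so the fiber-contraction argument must be nested carefully, tracking the contraction constants lost at each differentiation and the relation $k\varepsilon'<\beta$ needed to control the $k$-th derivative --- and the final conjugacy, where the parametrized Hartman--Grobman step must be carried out with bounds uniform in the parameter $y$. I would also note that in the use made of this theorem in the present paper all of this is much softer: the center manifold coincides with the line of equilibria $\{a=b=0\}$, so there $\phi\equiv 0$ and $g(y,\phi(y))\equiv 0$, and only the elementary stable/unstable manifold statements and the structure of the flow near a normally hyperbolic curve of equilibria are actually invoked.
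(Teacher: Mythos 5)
The paper offers no proof of this statement: it is quoted as a known result (Theorem 4.1 of Kirchgraber--Palmer), so there is no internal argument to compare against. Your sketch --- cutoff to globalize, Lyapunov--Perron fixed point for the graph $\phi$, fiber contraction for $C^k$ regularity, the same scheme for the stable/unstable manifolds, and the Shoshitaishvili reduction for the conjugacy onto \eqref{eq:normal_form_hom} --- is exactly the standard route taken in that reference, and the operator you write down is the correct one (both integrals converge because $y_\phi$ grows sub-exponentially while $e^{-B_u s}\Pi_u$ and $e^{-B_s s}\Pi_s$ decay at rate $\beta$). One caveat worth flagging: uniqueness of the \emph{fixed point} of $\mathcal T$ is uniqueness within the class determined by a particular cutoff; local center manifolds are in general not unique (only their Taylor jet at the origin is), so the word ``unique'' in the statement needs the usual qualification. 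In the present paper this is harmless, since --- as you correctly note --- every local center manifold must contain the nearby equilibria, and here the curve of equilibria $\{a=b=0\}$ already has the right dimension $k=1$, which forces $\phi\equiv 0$ and makes the center manifold genuinely unique and the whole machinery essentially trivial in the application.
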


\begin{definition}[Error of approximation of the center manifold]
Consider a smooth dynamical system in normal-form \eqref{Normal_dynamics}. For a smooth function $T: \mathds{R}^k \rightarrow \mathds{R}^l$ define the error of approximation of the normal form by
\begin{equation}
\begin{aligned}
(HT)(y) & := DT(y) \cdot \big[ Ay + g \big( y,T(y) \big) \big] \\ &  \, - B \cdot T(y) - h(y,T(y)) .
\end{aligned}
\end{equation}
\end{definition}

\begin{theorem}[Approximating the center manifold, cf. Theorem 3 in \cite{Carr_Center_Manifold}]  \label{Approx_Center_MF}
Consider a smooth dynamical system in normal form \eqref{Normal_dynamics} with local center manifold $\{ (y,z) : \, z = \phi(y) \}$ as in \eqref{eq:cm_notation}. Let $T: \mathds{R}^k \rightarrow \mathds{R}^l$ be smooth with $T(0) = 0$ and $DT(0) = 0$. Suppose that as $y \rightarrow 0$, for some $q >1$:
\begin{align}
(HT)(y) &= \mathcal{O}(|y|^q).
\intertext{Then, as $y \rightarrow 0$, also}
|T(y) - \phi(y)| &= \mathcal{O}(|y|^q).
\end{align}
\end{theorem}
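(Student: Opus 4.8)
The plan is to follow the strategy of \cite{Carr_Center_Manifold}: represent both the exact centre manifold $\phi$ and the approximant $T$ through a Lyapunov--Perron integral equation along the reduced flow, and then compare them by a contraction estimate. Fix a ball $B_{\delta_0}\subset\mathds{R}^k$ on which the local centre manifold $\{z=\phi(y)\}$ of Theorem \ref{CM_Flow_Thm} lives; in normal form, invariance of this set is exactly the identity $(H\phi)(y)\equiv 0$. Since no eigenvalue of $B$ lies on the imaginary axis, split $B=\mathrm{diag}(B_-,B_+)$ with $\mathrm{Re}\,\sigma(B_-)<-\beta<0<\beta<\mathrm{Re}\,\sigma(B_+)$, and split $z,h,\phi,T$ into components $(\cdot)_-,(\cdot)_+$ accordingly. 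For $y_0\in B_{\delta_0}$ let $u(\cdot\,;y_0)$ solve the reduced equation $\dot u=Au+g(u,T(u))$ with $u(0)=y_0$; because $\sigma(A)\subset i\mathds{R}$, $|e^{At}|$ grows at most polynomially, so after shrinking $\delta_0$ (where $g$ and its Lipschitz constant in the second argument are as small as we like) a Gronwall estimate gives $|u(t;y_0)|\le Ke^{\gamma|t|}|y_0|$ with $\gamma>0$ as small as we please, and we fix $\gamma$ so that $\max(2,q)\,\gamma<\beta$.

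The crux is that $T$ along this flow satisfies, directly from the definition of $H$,
\[
\tfrac{d}{dt}\,T\big(u(t)\big)=DT(u)\big[Au+g(u,T(u))\big]=B\,T(u)+h\big(u,T(u)\big)+(HT)\big(u(t)\big),
\]
so $T_\pm(u(t))$ obeys a linear inhomogeneous equation driven by $B_\pm$ with forcing $h_\pm(u,T(u))+(HT)_\pm(u)$. Integrating against the exponential dichotomy of $B$ (backward on the $B_-$-part, forward on the $B_+$-part), the boundary terms $e^{-B_\mp t}T_\mp(u(t))$ vanish as $t\to\mp\infty$ because $\gamma<\beta$, and one obtains $T=\mathcal{T}T+I_{HT}$, where $\mathcal{T}$ is the integral operator whose unique fixed point characterises the centre manifold (so $\phi=\mathcal{T}\phi$) and $I_{HT}$ collects the two integrals of the $(HT)$-forcing along the flow of $T$; all integrals converge since $h(u,T(u))=\mathcal{O}(|u|^{2})$, $(HT)(u)=\mathcal{O}(|u|^{q})$, $|u(s)|\le Ke^{\gamma|s|}|y_0|$ and $\max(2,q)\,\gamma<\beta$. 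One then checks, in the usual way, that on the set of continuous maps $W:B_{\delta_0}\to\mathds{R}^l$ with $W(0)=0$ and small Lipschitz constant---a set containing both $T$ and $\phi$ after a further shrinking of $\delta_0$---the operator $\mathcal{T}$ is a contraction with constant $\kappa<1$, the contraction coming from the smallness of the Lipschitz constants of $g,h$ near the origin together with the integrability furnished by $\beta$. Hence $\|T-\phi\|\le\|T-\mathcal{T}T\|+\kappa\|T-\phi\|$, i.e. $\|T-\phi\|\le(1-\kappa)^{-1}\|I_{HT}\|$; estimating $\|I_{HT}\|$ on $B_\delta$ gives a bound $C\delta^{q}$ with $C$ independent of $\delta\le\delta_0$, and taking $\delta=|y|$ yields $|T(y)-\phi(y)|=\mathcal{O}(|y|^{q})$.

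The main obstacle is bookkeeping rather than any single estimate. One must phrase the comparison through the fixed-point inequality above rather than try to estimate $\psi:=T-\phi$ directly, because a priori $\psi$ is only known to be $o(|y|)$ (both $T,\phi$ are $C^1$ with vanishing derivative at $0$), not $\mathcal{O}(|y|^{q})$; and one must verify that neither $\kappa$ nor the bound on $\|I_{HT}\|$ over $B_\delta$ deteriorates as $\delta\to 0$, which holds since both are controlled by Lipschitz constants that are uniform on $B_{\delta_0}$ and by the fixed gap $\beta$. The one structural condition to keep track of is the compatibility $\max(2,q)\,\gamma<\beta$ between the centre-direction growth rate $\gamma$, which can be taken arbitrarily small, and the hyperbolic gap $\beta$; this is always attainable, which is why the conclusion holds for every $q>1$, with an implied constant depending on $q$.
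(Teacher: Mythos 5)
The paper does not prove this statement at all: it is quoted verbatim from Carr's monograph (Theorem 3 there), so there is no in-paper argument to compare against. Your proposal reconstructs the standard Lyapunov--Perron proof, and most of its ingredients are sound: the identity $\frac{d}{dt}T(u(t))=B\,T(u)+h(u,T(u))+(HT)(u)$ is correct by the definition of $H$, the splitting of $B$ into stable and unstable blocks with the corresponding forward/backward integrals is the right representation, and the condition $\max(2,q)\,\gamma<\beta$ with $\gamma$ arbitrarily small is exactly why the result holds for every $q>1$. (One omission you should make explicit: the integrals run over all of $\mathds{R}_\pm$, so the orbit $u(s)$ leaves any fixed ball; the construction only makes sense after cutting off $g,h$ outside $B_{\delta_0}$ so that they are globally Lipschitz with small constant. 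This is standard but not optional.)

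There is, however, a genuine gap in the last step. The inequality $\|T-\phi\|\le(1-\kappa)^{-1}\|I_{HT}\|$ holds in the sup norm over the \emph{fixed} domain on which $\mathcal{T}$ is a contraction; restricting it to $B_\delta$ and ``taking $\delta=|y|$'' does not work, because $\mathcal{T}$ does not map functions on $B_\delta$ to functions on $B_\delta$ (the reduced orbit through $y_0\in B_\delta$ exits $B_\delta$, so $(\mathcal{T}T)(y_0)$ depends on values of $T$ far outside $B_\delta$), and re-running the whole construction with a cutoff at scale $\delta$ produces a \emph{different} local center manifold $\phi_\delta$, whose distance to the original $\phi$ is precisely the kind of quantity you are trying to control. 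The standard repair --- and what Carr actually does --- is to work from the outset in the weighted space with norm $\|w\|_q:=\sup_{y\neq0}|w(y)|/|y|^q$ (equivalently, with the closed set $\{w:|w(y)-T(y)|\le C|y|^q\}$ intersected with the small-Lipschitz ball), show that $\mathcal{T}$ maps this set into itself for a suitable $C$ using $(HT)=\mathcal{O}(|y|^q)$ and the estimate $\int e^{-\beta|s|}|u(s)|^q\,ds\le C'|y_0|^q$, and then invoke uniqueness of the fixed point to conclude that $\phi$ itself lies in that set. This also resolves, rather than merely flags, the a priori issue you correctly identify (that $T-\phi$ is only known to be $o(|y|)$): you never need to know in advance that $\|T-\phi\|_q<\infty$, because the weighted bound is built into the invariant set on which the contraction acts.
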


\subsection{Calculating the normal form and the center manifold} \label{ch:CM_calculations}
We analyze the flow of the ODE System \eqref{3D_Flow_Def} around its fixed points by applying the theory from the previous section. We therefore write the system into normal form, see Def. \ref{Def_Normal_Form}. For a fixed point $(a,b,i) = (0,0,K)$, we begin with the affine transformation
\begin{align}
j = i-K,
\end{align}
and then decompose the resulting system into a linear part $M$ and a non-linear part $G$. To be concise with the notation from the previous section, which is adopted from the existing literature, we use a vectorial notation in coordinates $(j,a,b)$, such that the center manifold can be written as $\big \{ (j,a,b) : \, (a,b) = \phi(j) \big \}$.
\begin{definition} \label{Def:App_lin_nonlin}
Given $c >0, K \in \mathds{R}$, introduce the matrix $M$ as
\begin{align}
M &:= \begin{pmatrix}
0 & -\frac{K-r}{c} & 0 \\
0 & 0 & 1  \\
0 & K-1 & -c
\end{pmatrix}.
\intertext{Further, define the non-linear functions $g(j,a) := a^2 + aj$ and $G: \mathds{R}^3 \rightarrow \mathds{R}^3$:}
G
\begin{pmatrix}
j \\
a \\
\apr
\end{pmatrix} &  :=  g(j,a) \cdot \begin{pmatrix}
- \frac{1}{c} \\
0 \\
1
\end{pmatrix}. \label{Def:Gnonlinear}
\end{align}
\end{definition}

\begin{lemma}[Linear and non-linear part]
For $c>0, K \in \mathds{R}$, the ODE System \eqref{3D_Flow_Def} can be decomposed in its linear and non-linear part. In coordinates $(j,a,\apr)$, where $j = i-K$, and using Def. \ref{Def:App_lin_nonlin}, this reads as
\begin{align}
\begin{pmatrix}
j' \\ a' \\ \apr'
\end{pmatrix} = 
M \cdot \begin{pmatrix}
j \\
a \\
\apr
\end{pmatrix} + 
G
\begin{pmatrix}
j \\
a \\
\apr
\end{pmatrix}.
\end{align}
\end{lemma}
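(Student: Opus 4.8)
The plan is to verify the asserted identity by a direct substitution: expand the right-hand side of \eqref{3D_Flow_Def} in the new coordinate $j = i - K$ and sort the outcome into a part that is linear in $(j,a,\apr)$ and a remaining part, then read off that these agree with $M$ and $G$ of Definition \ref{Def:App_lin_nonlin}. Since $K$ is a constant we have $j' = i'$, so the left-hand sides transform trivially and only the three right-hand sides must be rewritten.

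I would treat the three components in turn. The first equation $a' = \apr$ is already linear and is reproduced by the middle row $(0,0,1)$ of $M$, with vanishing non-linear remainder. For the second, inserting $i = j + K$ into $\apr' = a(a+i) - a - c\apr$ gives $\apr' = (K-1)a - c\apr + (a^2 + aj)$; the first two terms are exactly the bottom row $(0,\,K-1,\,-c)$ of $M$ applied to $(j,a,\apr)^{\top}$, while $a^2 + aj = g(j,a)$ is the $\apr$-component of $G$ in \eqref{Def:Gnonlinear}. For the third, inserting $i = j + K$ into $i' = -\tfrac{1}{c}\,a(a+i+r)$ and collecting terms produces the quadratic part $-\tfrac{1}{c}(a^2 + aj) = -\tfrac{1}{c}g(j,a)$, which is the $j$-component of $G$, together with a term linear in $a$ with constant coefficient, which constitutes the top row of $M$. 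Stacking the three rows yields $(j',a',\apr')^{\top} = M\,(j,a,\apr)^{\top} + G(j,a,\apr)$, as claimed.

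There is no genuine obstacle here: the only point demanding a little care is to keep the honestly linear contributions — those arising when $a$ is multiplied by the constants appearing inside the quadratic reactions — separated from the quadratic monomials $a^2$ and $aj$, and to carry the factor $-1/c$ correctly through the last equation. For completeness one may record that $g(j,a) = a^2 + aj$ is a homogeneous quadratic, so $G$ and all its first-order partial derivatives vanish at the origin; in view of the spectrum of $M$ noted in \eqref{Eigenvalues_DF} (a simple zero eigenvalue together with $\lambda_\pm$, which have nonzero real part whenever $K \neq 1$), the splitting is then of the normal-form type of Definition \ref{Def_Normal_Form}, although the lemma itself only asserts the algebraic identity.
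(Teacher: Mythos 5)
Your proposal is correct and is exactly the direct-substitution check that the paper leaves implicit (no proof is given there, as the lemma is a routine computation). One small point worth writing out rather than leaving as ``a term linear in $a$ with constant coefficient'': the substitution $i = j+K$ in $i' = -\tfrac{1}{c}a(a+i+r)$ yields the linear term $-\tfrac{K+r}{c}\,a$, so the $(1,2)$-entry of $M$ must be $-\tfrac{K+r}{c}$; the matrix printed in Definition \ref{Def:App_lin_nonlin} shows $-\tfrac{K-r}{c}$, which is a typo, as confirmed by the eigenvector formulas \eqref{Eigenvalues_Normal} that consistently use $\tfrac{K+r}{c}$.
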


\begin{definition}
For given $c>0,K \in \mathds{R}$, we define the discriminant
\begin{align}
\Delta := \sqrt{ \frac{c^2}{4} + K -1}.
\end{align}
The eigenvalues and eigenvectors of $M$ are then given by (cf. \eqref{Eigenvalues_DF_intro})
\begin{align}
&\lambda_0 = 0, \hspace{0.28cm} \lambda_{\pm} = -\frac{c}{2} \pm \Delta, \\
e_0 & = \begin{pmatrix}
1 \\ 0 \\ 0
\end{pmatrix}, 
e_\pm  = 
\begin{pmatrix}
\frac{K+r}{c} \cdot \frac{\lambda_\mp}{\lambda_\pm} \\
  -\lambda_\mp \\
  K-1 \end
  {pmatrix}. \label{Eigenvalues_Normal}
\end{align}
\end{definition}

Technical difficulties arise when the eigenvectors no longer span the entire $\mathds{R}^3$. We require $K\neq 1$ in view of \eqref{Eigenvalues_Normal}, which also eliminates the case $\lambda_+ = 0$. For similar reasons, we also exclude the case that $\lambda_+ = \lambda_-$, so we require that $\Delta \neq 0$. This is given if $K \neq 1-c^2/4$.

\begin{lemma}
Let $c>0$ and $K \notin \{1,1-c^2/4 \}$. The matrix $M$ can be written in diagonal form, such that $M = E D E^{-1}$. The matrices $D, E, E^{-1}$ are given by:
\begin{align}
D &= \textup{diag}(\lambda_0, \lambda_+, \lambda_-), \\
E &=
	\begin{pmatrix}
\text{ } & \text{ } & \text{ } \\
e_0 & e_+ & e_- \\
\text{ } & \text{ } & \text{ } \\
	\end{pmatrix} =
	\begin{pmatrix}
1 & \frac{K+r}{c} \cdot \frac{\lambda_-}{\lambda_+} & \frac{K+r}{c} \cdot \frac{\lambda_+}{\lambda_-}  \\
0 & -\lambda_- & -\lambda_+ \\
0 & K-1 & K-1
\end{pmatrix}, \\
E^{-1} &= 
	\begin{pmatrix}
1 & -\frac{K+r}{(1-K)} & -\frac{K+r}{c(1-K)} \\
0 & \frac{1}{2 \Delta } & - \frac{\lambda_+}{2 \Delta (1-K)} \\
0 & -\frac{1}{2 \Delta } & \frac{ \lambda_-}{2 \Delta (1-K)}
\end{pmatrix}.
\end{align}
\end{lemma}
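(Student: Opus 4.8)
This lemma is a routine diagonalization, so the plan is simply to verify each ingredient directly, reusing the eigenvalue/eigenvector formulas already recorded above. First I would confirm the spectrum of $M$: since its first column vanishes, expanding $\det(M-\lambda I)$ along that column gives $\det(M-\lambda I) = -\lambda\big(\lambda^2 + c\lambda + (1-K)\big)$, whose roots are $\lambda_0 = 0$ and $\lambda_\pm = -\tfrac c2 \pm \Delta$ with $\Delta = \sqrt{c^2/4 + K -1}$, matching the definition. From this I extract the two identities I will reuse repeatedly: $\lambda_+\lambda_- = 1-K$ and $\lambda_+ + \lambda_- = -c$, hence also $\lambda_+ - \lambda_- = 2\Delta$.

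Next I would check that $e_0,e_+,e_-$ are genuine eigenvectors by plugging them into $Me = \lambda e$ componentwise. For $e_0 = (1,0,0)^\top$ this is immediate from the zero first column. For $e_\pm = \big(\tfrac{K+r}{c}\tfrac{\lambda_\mp}{\lambda_\pm},\,-\lambda_\mp,\,K-1\big)^\top$: the second component of $Me_\pm$ equals $K-1$, which equals $\lambda_\pm(-\lambda_\mp)$ because $-\lambda_+\lambda_- = K-1$; the third component equals $(K-1)(-\lambda_\mp - c) = (K-1)\lambda_\pm$ because $\lambda_+ + \lambda_- = -c$; and the first component equals $\tfrac{K+r}{c}\lambda_\mp = \lambda_\pm\cdot\tfrac{K+r}{c}\tfrac{\lambda_\mp}{\lambda_\pm}$. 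These manipulations use $\lambda_\pm \neq 0$, which holds precisely because $\lambda_+\lambda_- = 1-K \neq 0$, i.e.\ $K \neq 1$. Then I would show $E$ is invertible: expanding along its first column, $\det E = (K-1)(\lambda_+ - \lambda_-) = 2\Delta\,(K-1)$, which is nonzero exactly under the two standing hypotheses $K\neq 1$ and $\Delta \neq 0 \Leftrightarrow K \neq 1 - c^2/4$. Hence the eigenvectors form a basis of $\mathds{R}^3$ and $M = EDE^{-1}$ with $D = \mathrm{diag}(\lambda_0,\lambda_+,\lambda_-)$.

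It only remains to confirm the closed form of $E^{-1}$, which I would do by direct multiplication, checking $E\,E^{-1} = I_3$ (equivalently $E^{-1} = \det(E)^{-1}\,\mathrm{adj}(E)$), and simplifying the resulting entries via $\lambda_+\lambda_- = 1-K$, $\lambda_+ + \lambda_- = -c$ and $\lambda_+ - \lambda_- = 2\Delta$ to rewrite everything in terms of $1-K$, $\Delta$ and $c$, as displayed. There is no conceptual obstacle here; the only thing to watch is the bookkeeping — making sure every denominator ($c$, $1-K$, $\Delta$, and the $\lambda_\pm$) is nonzero under the stated hypotheses, and that the $E^{-1}$ entries collapse to exactly the given expressions rather than an equivalent-looking variant.
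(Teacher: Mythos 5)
Your proposal is correct, and it supplies exactly the routine verification that the paper omits (the lemma is stated without proof): the characteristic polynomial, the Vieta identities $\lambda_+\lambda_-=1-K$, $\lambda_++\lambda_-=-c$, the componentwise eigenvector check, $\det E = 2\Delta(K-1)\neq 0$, and the multiplication $EE^{-1}=I_3$ all go through as you describe. One small but worthwhile remark: your check of the first component of $Me_\pm$ implicitly uses $M_{12}=-\tfrac{K+r}{c}$, which is what the linearization of $i'=-\tfrac{1}{c}a(a+i+r)$ actually gives and what the stated eigenvectors require; the entry $-\tfrac{K-r}{c}$ printed in the paper's definition of $M$ is a sign typo, and with it taken literally the eigenvector identity would fail for $r\neq 0$, so it is worth flagging that you are working with the corrected matrix.
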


\begin{lemma}[Dynamics in normal form] \label{NORMAL_FORM}
Let $c>0$ and $K \notin \{1,1-c^2/4 \}$. The eigenvectors $e_0, e_+, e_-$ of $M$ form a basis of $\mathds{R}^3$. We introduce the coordinates $(u,v,w)$, such that any $x  \in \mathds{R}^3$ can be written as $x = u \cdot e_0 + v \cdot e_+ + w \cdot e_-$. The System \eqref{3D_Flow_Def} in coordinates $(u,v,w)$ follows dynamics given via
\begin{align}
\begin{pmatrix}
u' \\ v' \\ w'
	\end{pmatrix}
	= \begin{pmatrix}
0 \\ \lambda_+ \, v \\ \lambda_- \, w 
\end{pmatrix} 
+ P(u,v,w) \cdot \begin{pmatrix}
-\frac{1}{c} ( 1 + \frac{K+r}{1-K}) \\
- \frac{\lambda_+}{2 \Delta (1-K)} \\
\frac{\lambda_-}{2 \Delta (1-K)}
	\end{pmatrix},
\end{align}
where $P$ is a polynomial such that $P(u,0,0) = 0$:
\begin{align}
P(u,v,w) &:= -\Big( \lambda_- \, v + \lambda_+ \, w \Big) \nonumber \\
& \hspace{0.65cm} \cdot \Big(
-\lambda_- \, v - \lambda_+ \, w + u + \frac{K+r}{c} \big[ \frac{\lambda_-}{\lambda_+} v + \frac{\lambda_+}{\lambda_-} w \big]
\Big). \label{def:pol_cm}
\end{align}
\begin{proof}
We change coordinates from $u,v,w$ to $j,a,b$ and back:
\begin{align}
\begin{pmatrix}
u' \\ v' \\ w'
\end{pmatrix}
& = \begin{pmatrix}
0 \\ \lambda_+ \, v \\ \lambda_- \, w 
\end{pmatrix} 
+ E^{-1} \cdot G ( E \cdot \begin{pmatrix}
u \\ v \\ w
\end{pmatrix} ).
\intertext{Now recall the functions $G$ and $g$, see \eqref{Def:Gnonlinear}. Explicitely calculating the non-linear part $E^{-1}GE$ results in}
E^{-1} \cdot G ( E \cdot \begin{pmatrix}
u \\ v \\ w
\end{pmatrix} )
&= E^{-1} \cdot
	\begin{pmatrix}
- \frac{1}{c} \\ 0 \\ 1
\end{pmatrix} \cdot g(E \cdot \begin{pmatrix}
u \\ v \\ w
	\end{pmatrix})
	\nonumber \\
&= \begin{pmatrix}
-\frac{1}{c} ( 1 + \frac{K+r}{1-K}) \\
- \frac{\lambda_+}{2\Delta(1-K)} \\
\frac{\lambda_-}{2\Delta(1-K)}
	\end{pmatrix}
	\cdot 
	 g \begin{pmatrix}
u + \frac{K+r}{c} ( \frac{\lambda_-}{\lambda_+} v + \frac{\lambda_+}{\lambda_-} w)  \\
- \lambda_- \, v - \lambda_+ \, w \\
\apr \big( u,v,w \big)
\end{pmatrix} \nonumber \\
& = P(u,v,w) \cdot \begin{pmatrix}
-\frac{1}{c} ( 1 + \frac{K+r}{1-K}) \\
- \frac{\lambda_+}{2\Delta(1-K)} \\
\frac{\lambda_-}{2\Delta(1-K)}
	\end{pmatrix}.
\end{align}
Luckily, for evaluating $g(j,a)$, we do not have to calculate the coordinate $b(u,v,w)$. 
\end{proof}
\end{lemma}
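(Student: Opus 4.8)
The plan is to diagonalize the linear part $M$ of the decomposition from the previous lemma and then to track the image of the quadratic term $G$ under the corresponding linear change of coordinates. Recall that, after the shift $j=i-K$, the System \eqref{3D_Flow_Def} reads $x' = Mx + G(x)$ with $x=(j,a,b)^{\top}$, $G(x) = g(j,a)\cdot(-\tfrac1c,0,1)^{\top}$ and $g(j,a)=a^{2}+aj$, and that $M = E D E^{-1}$ with $D=\operatorname{diag}(\lambda_0,\lambda_+,\lambda_-)$ and $E$ the matrix whose columns are the eigenvectors $e_0,e_+,e_-$. First I would introduce the coordinates $y=(u,v,w)$ by setting $x = E y$, which is exactly the requirement that $x = u e_0 + v e_+ + w e_-$. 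Differentiating and left-multiplying by $E^{-1}$ gives $y' = E^{-1}M E\,y + E^{-1}G(E y) = D y + E^{-1}G(E y)$, so the linear part is at once the claimed $(0,\lambda_+ v,\lambda_- w)^{\top}$.

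All the work then sits in the nonlinear part, and the decisive simplification is that $G$ is a \emph{scalar} function times the fixed vector $(-\tfrac1c,0,1)^{\top}$, so that $E^{-1}G(E y) = g(j,a)\cdot E^{-1}(-\tfrac1c,0,1)^{\top}$. The constant column $E^{-1}(-\tfrac1c,0,1)^{\top}$ is computed once from the explicit form of $E^{-1}$ recorded in the previous lemma; it equals $\big(-\tfrac1c(1+\tfrac{K+r}{1-K}),\,-\tfrac{\lambda_+}{2\Delta(1-K)},\,\tfrac{\lambda_-}{2\Delta(1-K)}\big)^{\top}$, which is precisely the vector appearing in the statement. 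It remains only to re-express the scalar $g(j,a)=a(a+j)$ in the new coordinates: reading the first two rows of $E$ gives $a=-\lambda_- v-\lambda_+ w$ and $j=u+\tfrac{K+r}{c}\big(\tfrac{\lambda_-}{\lambda_+}v+\tfrac{\lambda_+}{\lambda_-}w\big)$, and substituting these into $a(a+j)$ reproduces exactly the polynomial $P(u,v,w)$ of the statement. The point worth stressing is that, because $g$ depends only on $j$ and $a$ and not on $b$, one never has to compute the (far messier) third component of $E y$. Finally, $P(u,0,0)=0$ is immediate since the factor $a=-(\lambda_- v+\lambda_+ w)$ vanishes at $v=w=0$; the same remark shows that $P$ has no constant or linear part, so the system has genuinely been put into normal form in the sense of Definition \ref{Def_Normal_Form}, with center coordinate $u$ and hyperbolic coordinates $v,w$.

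I do not expect a real obstacle here: the statement is a bookkeeping exercise in elementary linear algebra, and the only places that demand care are the signs and the $\tfrac{1}{1-K}$ and $\tfrac{1}{2\Delta}$ factors when evaluating $E^{-1}(-\tfrac1c,0,1)^{\top}$ and when inserting the eigenvector components into $g$. The one structural remark I would record is why the hypothesis $K\notin\{1,\,1-c^2/4\}$ is needed: one has $\det E=(K-1)(\lambda_+-\lambda_-)$, which vanishes exactly when $K=1$ (forcing the repeated eigenvalue $\lambda_+=\lambda_0=0$) or when $K=1-c^2/4$ (forcing $\Delta=0$, i.e. $\lambda_+=\lambda_-$); in either degenerate case $M$ is not diagonalizable in the chosen basis and the clean form above is unavailable. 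In the non-degenerate case, the eigenvectors $e_0,e_+,e_-$ are linearly independent for exactly the same reason, which also justifies the first sentence of the lemma.
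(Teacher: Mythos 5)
Your proposal is correct and follows essentially the same route as the paper: diagonalize the linear part via $x=Ey$, exploit that $G$ is a scalar $g(j,a)$ times a fixed vector so that $E^{-1}G(Ey)=g\cdot E^{-1}(-\tfrac1c,0,1)^{\top}$, and read off $j$ and $a$ from the first two rows of $E$ without ever computing the $b$-component. Your added remark on $\det E=(K-1)(\lambda_+-\lambda_-)$ correctly explains the hypothesis $K\notin\{1,1-c^2/4\}$, which the paper only states informally before the lemma.
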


Now we have all ingredients for computing the center manifold. We use the approximation argument from Theorem \ref{Approx_Center_MF}.

\begin{theorem}[Asymptotic behavior] \label{CM_thm_main}
Let $c>0$ and $K \notin \{1,1-c^2/4 \}$, and let $(a,b,i) = (0,0,K)$ be a fixed point of the System \eqref{3D_Flow_Def}. Locally around $(0,0,K)$, the center manifold of the fixed point coincides with the set
\begin{align}
\{a=\apr=0\}.
\end{align}
In a non-empty open neighborhood around $(0,0,K)$, the flow of the System \eqref{3D_Flow_Def} is equivalent to
\begin{align}
\begin{pmatrix}
u' \\ v' \\ w'
	\end{pmatrix}
	= \begin{pmatrix}
0 \\ \lambda_+ \, v \\ \lambda_- \, w
\end{pmatrix}, \label{normal_flow_calculated}
\end{align}
where $u,v,w$ are the coordinates in the system of eigenvectors $e_0, e_+, e_-$ of the matrix $M$, see \eqref{Eigenvalues_Normal}.
\begin{proof}
In the normal form from Lemma \ref{NORMAL_FORM}, the center manifold can be calculated as a function $\phi(u): \mathds{R} \rightarrow \mathds{R}^2$. As $u \rightarrow 0, \phi(u) \in \mathcal{O}(u^2)$, and $\phi(u)$ can be approximated to any degree by some polynomial without linear and constant parts. For some arbitrary approximation $T: \mathds{R} \rightarrow \mathds{R}^2$ with components $T_v, T_w$, we can estimate the error of the approximation $(HT)(u)$ by Theorem \ref{Approx_Center_MF}. Inserting the normal form from Lemma \ref{NORMAL_FORM} results in
\begin{equation}
\begin{aligned}
(HT)(u) = &- DT(u) \cdot P\big(u,T(u)\big) \cdot \frac{1}{c}(1+\frac{K-r}{1-K})
- \begin{pmatrix}
\lambda_+ \cdot T_v(u) \\
\lambda_- \cdot T_w(u)
\end{pmatrix}
 \\
 &- P\big(u,T(u)\big) \cdot \begin{pmatrix}
- \frac{\lambda_+}{2\Delta(1-K)} \\
\frac{\lambda_-}{2\Delta(1-K)}
	\end{pmatrix}.  \label{Approx_Center}
\end{aligned}
\end{equation}
For the center manifold, $(H\phi)(u)=0$. From Eq. \eqref{Approx_Center}, we can extract the coefficients of the Taylor Expansion of $\phi(u)$ around the fixed point iteratively, by choosing better and better approximating polynomials $T_n$. For the start, take some polynomial $T_2(u): \mathds{R} \rightarrow \mathds{R}^2$ of order $2$. Let $\alpha, \beta \in \mathds{R}$ and define
\begin{align}
T_2(u) &:= (\alpha u^2, \beta u^2).
\intertext{Note that $P\big(u,T_2(u)\big) = \mathcal{O}(u^3)$, for $P$ as defined in \eqref{def:pol_cm}. Thus}
(HT_2)(u) &= \mathcal{O}(u^3) - \begin{pmatrix}
\lambda_+ \, \alpha u^2 \\
\lambda_- \, \beta u^2
\end{pmatrix}.
\end{align}
Hence, for any approximation of type $T_2(u) = (\alpha u^2, \beta u^2)$, the leading error term is of order $\mathcal{O}(u^3)$ if and only if $T_2(u) \equiv (0,0)$. We conclude that the second order approximation of $\phi$ is given by $T_2(u) \equiv (0,0)$. By an easy induction, it follows that $T_n(u) = (0,0)$ for all $n \geq 2$, and so the local center manifold is given by $\phi(u) = (0,0)$. In the original system, this corresponds to $\{a=b=0\}$, which are the fixed points of the ODE \eqref{3D_Flow_Def}. We can now calculate the asymptotic flow in the normal form, given by \eqref{eq:normal_form_hom}. This results in the claimed linear Asymptotics \eqref{normal_flow_calculated}, when we use that the non-linear part vanishes: $P\big(u,\phi(u) \big) = P(u,0,0) = 0$.
\end{proof}
\end{theorem}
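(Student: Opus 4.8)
The plan is to carry everything into the normal-form coordinates $(u,v,w)$ furnished by Lemma~\ref{NORMAL_FORM} — this is where the hypothesis $K\notin\{1,1-c^2/4\}$ is needed, since it guarantees that the eigenvectors $e_0,e_+,e_-$ of $M$ span $\mathds{R}^3$ so that the diagonalization is available — and then to identify $\{v=w=0\}$ as the local center manifold and read off the conjugate flow. The structural fact that makes this work is that every nonlinear term of System~\eqref{3D_Flow_Def} carries a factor of $a$: in Definition~\ref{Def:App_lin_nonlin} the nonlinearity is the scalar $g(j,a)=a^{2}+aj=a(a+j)$ times a fixed vector, so the nonlinear part vanishes identically on the plane $\{a=0\}$. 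Since the center eigenvector is $e_0=(1,0,0)$ in $(j,a,b)$-coordinates, the line $\{v=w=0\}$ is exactly $\{a=b=0\}$ (with $j$, hence $i$, free), which lies in $\{a=0\}$; correspondingly the polynomial $P$ of~\eqref{def:pol_cm} satisfies $P(u,0,0)=0$, so the normal-form vector field of Lemma~\ref{NORMAL_FORM} is identically zero on $\{v=w=0\}$. Thus $\{v=w=0\}$ is a smooth, one-dimensional invariant manifold through the fixed point, tangent to the center eigenspace, and in fact consists entirely of fixed points — it is the continuum~\eqref{intro_fixed_points_ODE}.

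First I would conclude from this that $\{a=b=0\}$ \emph{is} the local center manifold, i.e.\ that the graph function $\phi$ vanishes near the origin. The clean way is to invoke the local uniqueness clause of Theorem~\ref{CM_Flow_Thm}: there is a unique local center manifold of dimension one, and we have just exhibited one, namely $\{v=w=0\}$, so $\phi\equiv 0$ locally. An alternative that only uses the approximation statement is to apply Theorem~\ref{Approx_Center_MF} with the trivial test function $T\equiv 0$: then $DT\equiv 0$ and $P\big(u,T(u)\big)=P(u,0,0)=0$, so the error $(HT)(u)$ read off from~\eqref{Approx_Center} vanishes identically, hence $(HT)(u)=\mathcal{O}(|u|^{q})$ for every $q>1$, which forces $\phi$ to agree with $0$ to all orders; since $\phi$ is smooth this can be bootstrapped order by order on~\eqref{Approx_Center}, exactly as the coefficient-by-coefficient induction does, to give $\phi\equiv 0$ on a neighborhood.

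Finally, with $\phi\equiv 0$ the reduced dynamics on the center manifold supplied by Theorem~\ref{CM_Flow_Thm} is $u'=g\big(u,\phi(u)\big)$ in normal-form language, which is again zero because the nonlinearity vanishes on $\{a=0\}$, while the transverse directions retain their genuine linear parts $v'=\lambda_{+}v$ and $w'=\lambda_{-}w$. The homeomorphism of Theorem~\ref{CM_Flow_Thm} therefore conjugates the flow of~\eqref{3D_Flow_Def} in a neighborhood of $(0,0,K)$ to the decoupled linear system~\eqref{normal_flow_calculated}, which is the claim. I expect the only genuinely delicate point to be the passage from ``$\{a=b=0\}$ is \emph{a} center manifold'' to ``it is \emph{the} center manifold'': local center manifolds can in principle differ by flat, non-analytic terms, so one must lean on the uniqueness built into the cited Kirchgraber--Palmer theorem (or else observe that any choice of center manifold yields the same asymptotics, which is all the later sections actually use). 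Everything else — verifying $P(u,0,0)=0$, and evaluating $(HT)$ at $T\equiv 0$ — is immediate from Definition~\ref{Def:App_lin_nonlin} and Lemma~\ref{NORMAL_FORM}.
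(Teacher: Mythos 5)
Your proposal is correct, and it reaches the identification $\phi\equiv 0$ by a genuinely different route than the paper. The paper works entirely through the approximation result (Theorem \ref{Approx_Center_MF}): it inserts trial polynomials $T_n$ into the error functional \eqref{Approx_Center} and shows by induction that every Taylor coefficient of $\phi$ of order $\geq 2$ must vanish, then concludes $\phi\equiv 0$ and reads off the reduced flow. You instead observe that the nonlinearity $g(j,a)=a(a+j)$ vanishes on $\{a=0\}$, that $\{v=w=0\}=\{a=b=0\}$ (this uses $K\neq 1$ and $\Delta\neq 0$ to see the $a$- and $b$-components of $ve_++we_-$ can only vanish when $v=w=0$), and that this line is the continuum of equilibria \eqref{intro_fixed_points_ODE} — hence an invariant, smooth, one-dimensional manifold tangent to the center eigenspace, i.e.\ \emph{a} center manifold — and then invoke the uniqueness clause of Theorem \ref{CM_Flow_Thm} to conclude it is \emph{the} one. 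Your route is shorter and, notably, more airtight on the one point where the paper's argument is technically incomplete: knowing that all Taylor coefficients of a smooth $\phi$ vanish (which is also all your alternative $T\equiv 0$ computation yields) does not by itself exclude a flat, nonzero $\phi$; exhibiting the invariant manifold exactly does. Your caveat about non-uniqueness of center manifolds is well placed — if one distrusts the uniqueness assertion as stated, the standard fact that every equilibrium remaining in a small neighborhood of the fixed point must lie on \emph{every} local center manifold closes the gap, since the one-dimensional fixed-point continuum then fills out any one-dimensional center manifold. The final step — reduced dynamics $u'=0$ because $P(u,0,0)=0$, plus the conjugacy of Theorem \ref{CM_Flow_Thm} yielding \eqref{normal_flow_calculated} — coincides with the paper's.
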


\section{Numerical evaluation of the spectrum of $\boldsymbol{\mathcal{L}}$}
\label{ch:spectrum_evans}

As announced in our discussion in Section \ref{ch:discussion}, we analyze the spectral stabilty of the critical traveling wave. The theoretical background is presented in \cite{SANDSTEDE_stability_traveling, Ghazaryan_overview}, the details about the computational approach are presented by Barker \textit{et al.} \cite{Barker_Evans_computations}, we only describe the application in the present setting.

Here and from now on, $c=2$ and we denote as $a(z), i(z)$ the critical traveling wave with speed $c=2$ and $\ipinf = 0$. We denote the exponent of the weight-function as $\alpha>0$ and analyze the spectral stability of the critical traveling wave in the weighted $L^2$-Sobolev space $H^1_\alpha$, with norm $||f||_{H^1_\alpha} = ||f \cdot e^{\alpha z}||_{H^1}$.

We linearize the PDE around $a(z),i(z)$ and analyze the non-negative spectrum of the resulting linear operator $\mathcal{L}$, as defined in \eqref{Eq:Linearized_Operator}. For System \eqref{EQUA}, this operator $\mathcal{L}:H^2_\alpha(\mathds{R}) \times H^1_\alpha(\mathds{R}) \rightarrow H^2_\alpha(\mathds{R}) \times H^1_\alpha(\mathds{R})$ acts on a pair of functions $u \in H^2_\alpha, v \in H^1_\alpha$, which correspond to perturbations of $a$ and $i$, respectively:
\begin{align}
\begin{aligned} \label{Eq:Linear_Eq}
u &\mapsto  u'' + cu' + u(1 - (2a + i ))- va, \\
v &\mapsto  cv' + u(2a + i + r) + va.
\end{aligned}
\end{align}
As will see later, we only need to consider the point-spectrum of $\mathcal{L}$. Thus, for $\gamma \in \mathds{C}$ with $\mathfrak{Re}(\gamma) \geq 0$, we look for a function $U \in H^1_\alpha$ that solves $\mathcal{L} U = \gamma \cdot U$.

The operator $\mathcal{L}$ is equivalent to a first-order operator $\tilde{\mathcal{L}}:H^1_\alpha(\mathds{R}^3) \rightarrow H^1_\alpha(\mathds{R}^3)$, when we introduce an auxiliary variable for $u'$. We will omit the tilde. Now, $\gamma \in \mathds{C}$ lies in the point spectrum of $\mathcal{L}$ if and only if there exists a function $U: \mathds{R} \rightarrow \mathds{C}^3, U \in H^1_\alpha$, which solves
\begin{align}
\frac{d}{dz} U = M(z, \gamma) \cdot U,
&&M(z, \gamma) := \begin{pmatrix}
0 & 1 & 0 \\
\gamma + 2a(z) + i(z) -1 & -c & a(z) \\
- \frac{2a(z)+i(z)+r}{c} & 0 & \frac{\gamma -a(z)}{c}
\end{pmatrix}.
\label{Eq:linear_problem_ODE}
\end{align}
It can easily be seen that the matrix $M(+\infty, \gamma)$ has eigenvalues
\begin{align}
\beta_1 &= \frac{\gamma}{c}, \hspace{0.5cm}  && \beta_{2} = - \frac{c}{2} +\sqrt{\gamma}, \hspace{0.5cm} && \beta_{3} = - \frac{c}{2} - \sqrt{\gamma}, \label{Eq:Eigenvalues_Lambda+}
\intertext{and the matrix $M(-\infty, \gamma)$ has eigenvalues}
\beta_1 &= \frac{\gamma}{c}, \hspace{0.5cm}  && \beta_{2} = - \frac{c}{2} +\sqrt{2+\gamma}, \hspace{0.5cm} && \beta_{3} = - \frac{c}{2} - \sqrt{2+\gamma}.  \label{Eq:Eigenvalues_Lambda-}
\end{align}

If $U \in H^1_\alpha$, then $W(z) := U(z) \cdot e^{\alpha z}$ is bounded and vanishes. The function $W(z)$ fulfills
\begin{align}
W'(z) = \big( M(z, \gamma) + \alpha \cdot \mathds{1} \big) \cdot W(z). \label{Eq:H_alpha_funtion}
\end{align}
Remark that the matrix $M + \alpha \mathds{1}$ has the same eigenvectors as $M$, and that its eigenvalues are shifted by $\alpha$ when compared to $M$. If $M(\pm \infty) + \alpha \mathds{1}$ has no eigenvalues with zero real-part, the theory of \textit{exponential dichotomies} implies that any bounded solution $W(z)$ must vanish exponentially fast as $z \rightarrow \pm \infty$, and that it asymptotically approaches the unstable (resp. stable) manifold of the constant matrix $M(- \infty, \gamma)$ as $z \rightarrow - \infty$ (resp. $M(+\infty,\gamma)$ as $z \rightarrow + \infty$) \cite{SANDSTEDE_stability_traveling}. Therefore, a bounded solution $W$ exists if and only if the trajectories that emerge from these manifolds intersect. This allows us to compute the Evans-function: it is a determinant that evaluates to zero if and only if the solutions that decay at $-\infty$ and those that decay at $+\infty$ are somehow linearly dependent, and thus intersect.

We investigate the case $\alpha = \frac{c}{2}$, which is equal to the rate of convergence of the wave as $z \rightarrow + \infty$, up to a sub-exponential term, see Theorem \ref{Main_Theorem}. For $\alpha = \frac{c}{2}$, then within the region $\{\mathfrak{Re}(\gamma) \geq 0, \gamma \neq 0\}$ the following holds: the dimension of the unstable space of $M(-\infty,\gamma)+ \frac{c}{2} \cdot \mathds{1}$ is given by $k_-=2$, and the dimension of the stable space of $M(+\infty,\gamma)+ \frac{c}{2} \cdot \mathds{1}$ is given by $k_+=1$. This can easily be deduced from the corresponding Eigenvalues \eqref{Eq:Eigenvalues_Lambda+}, \eqref{Eq:Eigenvalues_Lambda-}, which do not cross the imaginary axis. The values $k_-$ and $k_+$ add up to the dimension of the ODE \eqref{Eq:linear_problem_ODE}. We say that $\{\mathfrak{Re}(\gamma) \geq 0, \gamma \neq 0\}$ is contained in the \textit{region of consistent splitting}. This implies that the non-negative part of the spectrum of $\mathcal{L}$ is contained in the point spectrum of the operator, which is a standard result \cite{SANDSTEDE_stability_traveling, SATTINGER_stability}. Within the region of consistent splitting, we can define the Evans-function $E(\gamma)$.

Given $\gamma$ with $\mathfrak{Re}(\gamma) \geq 0, \gamma \neq 0$, we let $X(z)$ be the unique solution of Eq. \eqref{Eq:H_alpha_funtion} that vanishes at $z = +\infty$, and let $Y_1(z),Y_2(z)$ span the two-dimensional space of solutions of Eq. \eqref{Eq:H_alpha_funtion} that vanish at $z = -\infty$. The Evans-function is defined as
\begin{align}
E( \gamma ) := \text{det} \big ( Y_1(z) \big | Y_2(z) \big | X(z) \big )  \Big |_{z=0}. \label{Eq:Evans}
\end{align}

\begin{figure}[h]
\vspace{2.6cm}
 	\centering
 	\begin{minipage}[c]{0.95\textwidth}
\begin{picture}(100,100)
	\put(10,0){\includegraphics[width=\textwidth]{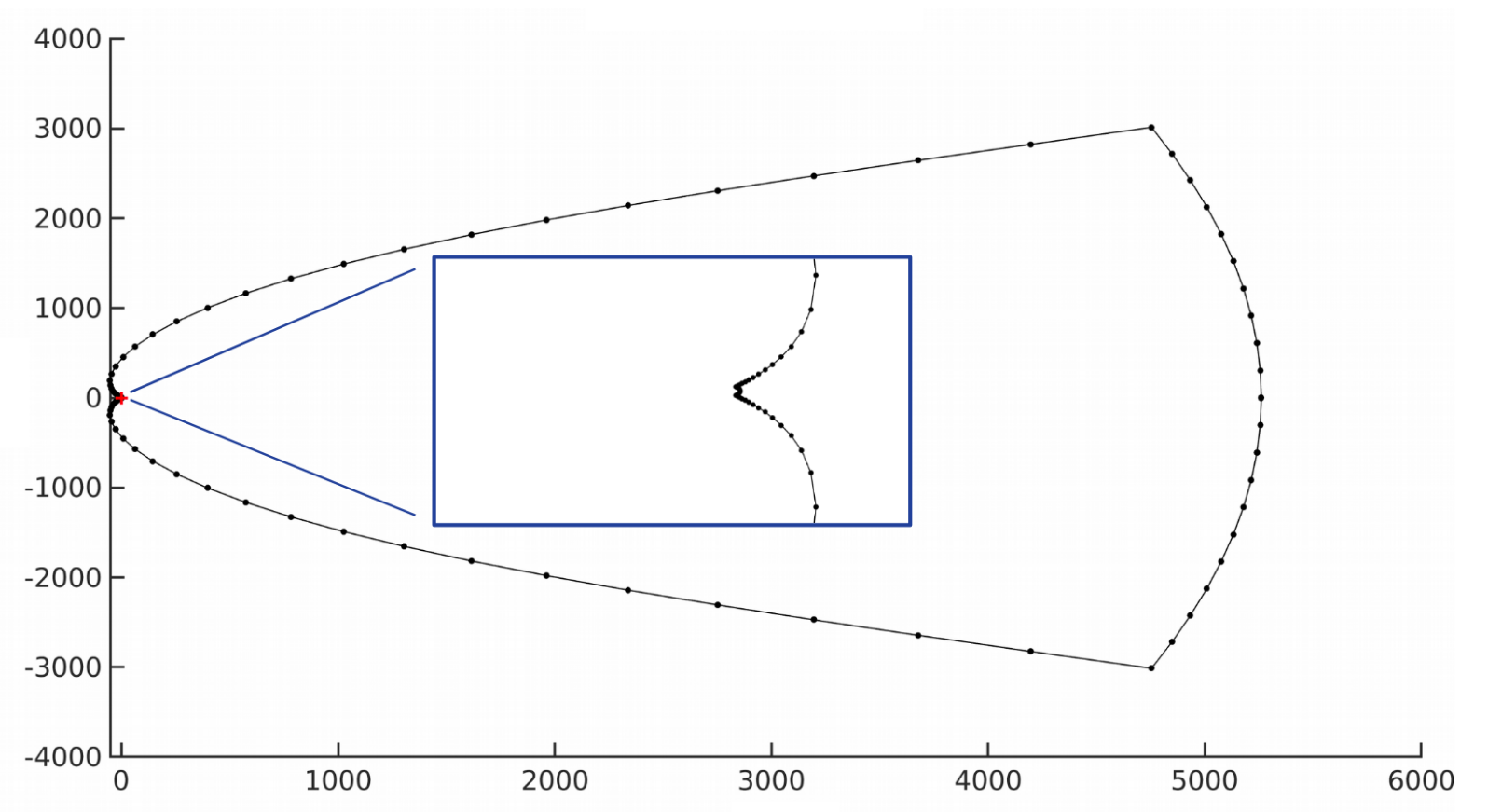}}
	\put(80,-12){\scalebox{1.5}{$\mathfrak{Re}\big((Ev(\gamma)\big)$}}
	\put(-5,30){\rotatebox{90}{ \scalebox{1.5}{ $\mathfrak{Im}\big(Ev(\gamma)\big)$ }}}
	\put(140,94){\textcolor{Red}{$\boldsymbol{+}$}}
\end{picture}
	\end{minipage}
	\vspace{0.5cm}
\caption{Numerical evaluation of the Evans-function \eqref{Eq:Evans} $Ev(\gamma)$ for $r=0$ and $\gamma$ on the boundary of the Domain $S$, defined in \eqref{Eq:Eiganvalue_Region}. The Evans-function for $r=1$ is very similar. The origin is marked with a small red cross. The graph does not enclose the origin and it can visually be seen that its winding number is equal to zero. We conclude that the Region $S$ contains no zeros of $Ev(\gamma)$.}
\label{Fig:Evans}
\end{figure}

It holds that $E(\gamma) = 0$ if and only if $\gamma$ lies in the point spectrum of $\mathcal{L}$. Moreover, $E(\gamma)$ is analytic if $X,Y_1,Y_2$ are chosen such that they are analytic in $\gamma$ \cite{SANDSTEDE_stability_traveling}. Thus it suffices to calculate $E(\gamma)$ along the boundary of a domain: the winding number along this contour then corresponds to the number of zeros inside the domain. We use this to verify that there are no zeros of $E(\gamma)$ within the set
\begin{align}
S:= \Big \{ \gamma \in \mathds{C} \, \Big | \, \mathfrak{Re}(\gamma) \geq 0, \, 10^{-3} \leq |\gamma| \leq 1000
\Big \}, \label{Eq:Eiganvalue_Region}
\end{align}
where we keep a small distance from the origin for numerical reasons and hope that there are no unexpectedly large eigenvalues. We want to remark that for partially diffusive systems, no general a priori upper bound for the size of the eigenvalues with non-negative real-part has been found yet, which would allow for a numerical proof of spectral stability. It may be possible to generalize the approach in \cite{Lattanzio_Dispersive_Shocks}. Simple energy estimates exist for traveling waves of diffusive systems, see e.g. chapter 6 in \cite{OzbagSchecter2018}.

The various numerical challenges that arise when computing the Evans-function, as well as their solutions, are described in detail by Barker \textit{et al.} \cite{Barker_Evans_computations}, who also suggest using their library STABLAB \cite{Barker_Stablab}. We gratefully followed this suggestion, and computed the left-adjoint Evans-function, a slight modification which is numerically advantageous in the present setting \cite{Barker_Evans_computations}. The result is presented in Figure \ref{Fig:Evans} and yields a strong evidence that the critical wave is spectrally stable in $H^1_{c/2}$.

\section{Rescaling the general system} \label{ch:Rescaling}
Let $r_S, r_A, D > 0$ and $r_I \geq 0$, and consider the reaction-diffusion system
\begin{equation}
\begin{aligned} \label{Eq:General_PDE}
A_t &= D \cdot A_{xx} + r_A A - r_S A(A+I), \\
I_t &= r_I A + r_S A(A+I),
\end{aligned}
\end{equation}
which is the general form of System \eqref{EQUA}. There exists a linear one-to-one correspondence to the normalized form. Therefore, we rescale time and space, $s := r_A \cdot t, \, y := \sqrt{D / r_A} \cdot x$, and also the densities of the particles, $\bar{A} := A \cdot r_S/r_A, \, \bar{I} := I \cdot r_S/r_A $. The rescaled dynamics of System \eqref{Eq:General_PDE} follow
\begin{equation}
\begin{aligned}
\bar{A}_s &= \bar{A}_{yy} + \bar{A} - \bar{A} ( \bar{A} + \bar{I} ) ,  \\
\bar{I}_s &= \frac{r_I}{r_A} \bar{I} + \bar{A} (\bar{A} + \bar{I} ),
\end{aligned}
\end{equation}
which is equivalent to System \eqref{EQUA} with $r = \frac{r_I}{r_A}$. In view of this, we can easily formulate a parameter-dependent version of Theorem \ref{Main_Theorem}:

\begin{theorem} \label{Main_Theorem_parameters}
Let $r_S, r_A, D > 0$ and $r_I \geq 0$, and consider the System \eqref{Eq:General_PDE} and a wave-speed $c>0$. Set
\begin{align}
\oc : = \max \big \{ 0, \frac{1}{r_S} \big ( r_A-\frac{c^2}{4D} \big ) \big \}.
\end{align}
For each pair $\iminf, \ipinf \in \mathds{R}^+$ such that
\begin{align}
\ipinf \in [\oc,\frac{r_A}{r_S}), \qquad \iminf = \frac{2 \cdot r_A}{r_S} - \ipinf, 
\end{align}
there exists a unique bounded and positive traveling wave $a,i$ with speed $c$ such that
\begin{align}
\lim_{\ta \rightarrow \pm \infty} & a(z) = 0, \qquad  \lim_{\ta \rightarrow \pm \infty} i(z) = i_{\pm \infty}.
\end{align}
If $ \frac{c^2}{4D} + r_S \cdot \ipinf - r_A = 0$, then convergence as $z \rightarrow + \infty$ is sub-exponentially fast and of order $ z \cdot e^{ - \frac{c}{2D} z}$. If $ \frac{c^2}{4D} + r_S \cdot \ipinf - r_A  > 0$, then convergence as $z \rightarrow + \infty$ is exponentially fast. Convergence as $z \rightarrow - \infty$ is exponentially fast in all cases. The corresponding rates are
\begin{align}
\mu_{\pm \infty} = - \frac{c}{2D} + \sqrt{ \frac{c^2}{4D^2} + \frac{r_S \cdot i_{\pm \infty} -r_A}{D} }.
\end{align}
Moreover, these are all bounded, non-negative, non-constant and twice differentiable solutions of Eq. \eqref{WAVE_EQ}.
\end{theorem}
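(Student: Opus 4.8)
The plan is to obtain Theorem~\ref{Main_Theorem_parameters} as a corollary of Theorem~\ref{Main_Theorem} via the linear change of variables set up above; no new analysis is needed. Applying the rescaling $s := r_A t$, $y := x\,\sqrt{r_A/D}$ (so that the diffusivity is normalized to $1$) and $(\bar A,\bar I) := (r_S/r_A)(A,I)$ sends the general System~\eqref{Eq:General_PDE} to the normalized System~\eqref{EQUA} with reproduction rate $r = r_I/r_A \ge 0$. The first step is to record how this substitution acts on traveling waves: a profile $(a,i)(x-ct)$ of~\eqref{Eq:General_PDE} with speed $c>0$ corresponds to a profile $(\bar a,\bar i)(\tilde z)$ of~\eqref{EQUA}, where the wave coordinate rescales as $\tilde z = \sqrt{r_A/D}\,(x-ct)$, the normalized speed is $\tilde c = c/\sqrt{r_A D}>0$, and the densities are multiplied by the positive constant $r_S/r_A$. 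Since $z\mapsto\tilde z$ is an increasing linear bijection of $\mathds{R}$ sending each end to itself, this correspondence is a bijection between the bounded, non-negative, non-constant, twice-differentiable traveling-wave solutions of the two systems, preserving limits at $z = \pm\infty$; in particular the hypotheses $\tilde c > 0$, $r \ge 0$ of Theorem~\ref{Main_Theorem} hold whenever $c > 0$.

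The second step is to apply Theorem~\ref{Main_Theorem} to~\eqref{EQUA} with the pair $(r,\tilde c)$ and push every conclusion back through this dictionary. Using $\bar i_{\pm\infty} = (r_S/r_A)\,i_{\pm\infty}$ and $\tilde c^2 = c^2/(r_A D)$, the normalized admissibility condition $\bar i_{+\infty}\in[\max\{0,1-\tilde c^2/4\},1)$ becomes precisely $\ipinf\in[\oc,r_A/r_S)$ with $\oc=\max\{0,\tfrac1{r_S}(r_A-\tfrac{c^2}{4D})\}$, and $\bar i_{-\infty}+\bar i_{+\infty}=2$ becomes $\iminf=\tfrac{2r_A}{r_S}-\ipinf$. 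Existence and uniqueness, monotonicity of $i$, and the unique local and global maximum of $a$ transfer immediately. For the decay rates one uses $\tilde z=\sqrt{r_A/D}\,z$, so a normalized rate $\bar\mu$ in $\tilde z$ corresponds to the rate $\sqrt{r_A/D}\,\bar\mu$ in $z$; inserting $\bar\mu_{\pm\infty}=-\tfrac{\tilde c}{2}+\sqrt{\tfrac{\tilde c^2}{4}+\bar i_{\pm\infty}-1}$ and simplifying the nested square root gives exactly $\mu_{\pm\infty}=-\tfrac{c}{2D}+\sqrt{\tfrac{c^2}{4D^2}+\tfrac{r_S i_{\pm\infty}-r_A}{D}}$. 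The degenerate threshold $\tfrac{\tilde c^2}{4}+\bar i_{+\infty}-1=0$ rewrites as $\tfrac{c^2}{4D}+r_S\ipinf-r_A=0$, and the normalized sub-exponential order $\tilde z\,e^{-\tilde c\tilde z/2}$ becomes $z\,e^{-\frac{c}{2D}z}$. The completeness clause transfers because the bijection above identifies the two full solution sets.

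There is no substantive obstacle; the argument is entirely bookkeeping. The one point that requires care is the spatial scaling factor $\sqrt{r_A/D}$: it appears simultaneously in the normalized speed $\tilde c$, in the exponential rates, and hence in the location of the sub-exponential threshold, so I would pin down $\tilde c$ and the rate transformation once at the outset and verify the scaling convention so that~\eqref{Eq:General_PDE} is genuinely carried onto~\eqref{EQUA}; after that, all the displayed identities reduce to elementary manipulations of square roots.
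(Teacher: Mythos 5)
Your proposal is correct and matches the paper's approach: the paper likewise obtains Theorem~\ref{Main_Theorem_parameters} purely by the linear rescaling of Appendix~\ref{ch:Rescaling} applied to Theorem~\ref{Main_Theorem}, and your bookkeeping of the wave coordinate, speed $\tilde c = c/\sqrt{r_A D}$, limits, and decay rates is accurate. You even use the correct spatial scaling $y=\sqrt{r_A/D}\,x$ (the factor needed to normalize the diffusivity, and the one consistent with the stated rates $\mu_{\pm\infty}$), where the appendix's displayed formula $y:=\sqrt{D/r_A}\cdot x$ appears to be a typo.
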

In particular, for an invading front where $i \rightarrow 0$ as $z \rightarrow + \infty$, the remaining density of particles at the back of the wave is given by $\iminf = 2 \cdot \frac{r_A}{r_I}$.

\end{document}